\crefname{subsection}{Subsection}{Subsections}
\numberwithin{equation}{section}
\newtheorem{theorem}{Theorem} [section]	
\newtheorem{proposition}[theorem]	 {Proposition}	
\newtheorem{corollary}	[theorem]	 {Corollary}	
\newtheorem{lemma}	    [theorem]	 {Lemma}		
\newtheorem{definition}	[theorem]   {Definition}
\theoremstyle{definition}
\newtheorem{example}	[theorem]   {Example}
\theoremstyle{remark}
\newtheorem{notation}[theorem]{Notation}
\newtheorem{remark}[theorem]{Remark}
\newcommand{\R}{\mathbb{R}}
\newcommand{\RR}{\mathbb{R}}
\newcommand{\set}[1]{\left\{ #1 \right\}}
\newcommand{\Z}{\mathbb{Z}}
\newcommand{\ZZ}{\mathbb{Z}}
\newcommand{\N}{\mathbb{N}}
\newcommand{\te}{\tilde e}
\newcommand{\Div}{\operatorname{Div}}
\renewcommand{\div}{\operatorname{div}}
\DeclareMathOperator{\Pic}{Pic}
\DeclareMathOperator{\Prin}{Prin}
\DeclareMathOperator{\Jac}{Jac}
\DeclareMathOperator{\Prym}{Prym}
\DeclareMathOperator{\Nm}{Nm}       
\DeclareMathOperator{\Coker}{Coker}
\DeclareMathOperator{\Ker}{Ker}
\newcommand{\tf}{\mathrm{tf}}
\newcommand{\trop}{\mathrm{trop}}
\newcommand{\ud}{\mathrm{ud}}               
\renewcommand{\tilde}{\widetilde}
\newcommand{\Gam}{\widetilde{\Gamma}}
\newcommand{\romanNumeralCaps}[1]{\text{\MakeUppercase{\romannumeral #1}}}
\newcommand{\Ptilde}{\widetilde{P}}
\newcommand{\Gtilde}{\widetilde{G}}
\renewcommand{\mid}{\hspace{1pt} \middle| \hspace{1pt}}
\newcommand{\baseLL}{0}
\newcommand{\vertex}[2][1]{\filldraw[black, thick] (#2) circle (#1 * 1pt); }
\newcommand{\edge}[3][1]{\draw (#2) -- (#3); \vertex[#1]{#2} \vertex[#1]{#3} }
\newcommand{\Felix}[1]{\textcolor{magenta}{[\textbf{Felix}: #1]}}
\newcommand{\Thomas}[1]{\textcolor{red}{\textbf{Thomas}: #1}}
\newcommand*{\colorboxed}{}
\def\colorboxed#1#{%
  \colorboxedAux{#1}%
}
\newcommand*{\colorboxedAux}[3]{%
  \begingroup
    \colorlet{cb@saved}{.}%
    \color#1{#2}%
    \boxed{%
      \color{cb@saved}%
      #3%
    }%
  \endgroup
}
\newcommand{\towerI}[1]{
    \begin{tikzpicture}[baseline=\baseLL]

        \ifthenelse{#1<4}{
        \filldraw[black, thick] (-2, 1.5) circle (1pt);
        \filldraw[black, thick] (-3, 1.5) circle (1pt);}{
        \filldraw[black, thick] (-2.5, 1.5) circle (2pt) (-2.3, 1.5) node{2};
        }

        \ifthenelse{#1<3}{
        \filldraw[black, thick] (-2, 1) circle (1pt);
        \filldraw[black, thick] (-3, 1) circle (1pt);}{
        \filldraw[black, thick] (-2.5, 1) circle (2pt) (-2.3, 1) node{2};
        }
        
        \ifthenelse{#1<2}{
        \filldraw[black, thick] (-2, 0.5) circle (1pt);
        \filldraw[black, thick] (-3, 0.5) circle (1pt);}{
        \filldraw[black, thick] (-2.5, 0.5) circle (2pt) (-2.3, 0.5) node{2};
        }

        \ifthenelse{#1<1}{
        \filldraw[black, thick] (-2, 0) circle (1pt);
        \filldraw[black, thick] (-3, 0) circle (1pt);}{
        \filldraw[black, thick] (-2.5, 0) circle (2pt) (-2.3, 0) node{2};
        }
        \filldraw[black, thick] (0, 1.5) circle (1pt);
        \filldraw[black, thick] (0, 0) circle (1pt);
        \filldraw[black, thick] (0, 0.5) circle (1pt);
        \filldraw[black, thick] (0, 1) circle (1pt);
        
        \filldraw[black, thick] (2, 0.75) circle (1pt);
        \draw[->, black] (-1.5, 0.75) -- (-0.5, 0.75);
        \draw[->, black] (0.5, 0.75) -- (1.5, 0.75);
        \draw (-4, 0.75) node{$\text{\romanNumeralCaps{3}}$};
    \end{tikzpicture}
}
\newcommand{\towerII}[2]{
    \begin{tikzpicture}[baseline=\baseLL]
        \ifthenelse{#2<2}{
        \filldraw[black, thick] (-2, 1) circle (1pt);
        \filldraw[black, thick] (-3, 1) circle (1pt);}{
        \filldraw[black, thick] (-2.5, 1) circle (2pt) (-2.3, 1) node{2};
        }
        
        \ifthenelse{#2<1}{
        \filldraw[black, thick] (-2, 0.5) circle (1pt);
        \filldraw[black, thick] (-3, 0.5) circle (1pt);
        }{
        \filldraw[black, thick] (-2.5, 0.5) circle (2pt) (-2.3, 0.5) node{2};
        }
        
        \ifthenelse{#1<1}{
        \filldraw[black, thick] (-2, 0) circle (2pt) (-1.8, 0) node{2};
        \filldraw[black, thick] (-3, 0) circle (2pt) (-2.8, 0) node{2};
        }{
        \filldraw[black, thick] (-2.5, 0) circle (4pt) (-2.2, 0) node{4};
        }

        \filldraw[black, thick] (0, 0) circle (2pt) (-0.2, 0) node{2};
        \filldraw[black, thick] (0, 0.5) circle (1pt);
        \filldraw[black, thick] (0, 1) circle (1pt);
        \filldraw[black, thick] (2, 0.5) circle (1pt);
        \draw[->, black] (-1.5, 0.5) -- (-0.5, 0.5);
        \draw[->, black] (0.5, 0.5) -- (1.5, 0.5);
        \draw (-4, 0.5) node{$\text{\romanNumeralCaps{2}}$};
    \end{tikzpicture}
}
\newcommand{\towerIII}[1]{
    \begin{tikzpicture}[baseline=\baseLL]
                
        \ifthenelse{#1<2}{
        \filldraw[black, thick] (-2, 0.5) circle (2pt) (-1.8, 0.5) node{2};
        \filldraw[black, thick] (-3, 0.5) circle (2pt) (-2.8, 0.5) node{2};
        }{
        \filldraw[black, thick] (-2.5, 0.5) circle (4pt) (-2.2, 0.5) node{4};
        }
        
        \ifthenelse{#1<1}{
        \filldraw[black, thick] (-2, 0) circle (2pt) (-1.8, 0) node{2};
        \filldraw[black, thick] (-3, 0) circle (2pt) (-2.8, 0) node{2};
        }{
        \filldraw[black, thick] (-2.5, 0) circle (4pt) (-2.2, 0) node{4};
        }

        \filldraw[black, thick] (0, 0) circle (2pt) (-0.2, 0) node{2};
        \filldraw[black, thick] (0, 0.5) circle (2pt) (-0.2, 0.5) node{2};
        \filldraw[black, thick] (2, 0.25) circle (1pt);
        \draw[->, black] (-1.5, 0.25) -- (-0.5, 0.25);
        \draw[->, black] (0.5, 0.25) -- (1.5, 0.25);
        \draw (-4, 0.25) node{$\text{\romanNumeralCaps{3}}_{#1}$};
    \end{tikzpicture}
}
\newcommand{\towerIV}[2]{
    \begin{tikzpicture}[baseline=\baseLL]
                
        \ifthenelse{#1<1}{
        \filldraw[black, thick] (-2, 0) circle (3pt) (-1.7, 0) node{3};
        \filldraw[black, thick] (-3, 0) circle (3pt) (-2.7, 0) node{3};
        }{
        \filldraw[black, thick] (-2.5, 0) circle (6pt) (-2.1, 0) node{6};
        }
        
        \ifthenelse{#2<1}{
        \filldraw[black, thick] (-2, 0.5) circle (1pt);
        \filldraw[black, thick] (-3, 0.5) circle (1pt);
        }{
        \filldraw[black, thick] (-2.5, 0.5) circle (2pt) (-2.2, 0.5) node{2};
        }

        \filldraw[black, thick] (0, 0) circle (3pt) (-0.3, 0) node{3};
        \filldraw[black, thick] (0, 0.5) circle (1pt);
        \filldraw[black, thick] (2, 0.25) circle (1pt);
        \draw[->, black] (-1.5, 0.25) -- (-0.5, 0.25);
        \draw[->, black] (0.5, 0.25) -- (1.5, 0.25);
        \draw (-4, 0.25) node{$\text{\romanNumeralCaps{1}}$};
    \end{tikzpicture}
}
\newcommand{\towerV}[1]{
    \begin{tikzpicture}[baseline=\baseLL]
                
        \ifthenelse{#1<1}{
        \filldraw[black, thick] (-2, 0) circle (4pt) (-1.7, 0) node{4};
        \filldraw[black, thick] (-3, 0) circle (4pt) (-2.7, 0) node{4};
        }{
        \filldraw[black, thick] (-2.5, 0) circle (8pt) (-2.1, 0) node{8};
        }

        \filldraw[black, thick] (0, 0) circle (4pt) (-0.3, 0) node{4};
        \filldraw[black, thick] (2, 0) circle (1pt);
        \draw[->, black] (-1.5, 0) -- (-0.5, 0);
        \draw[->, black] (0.5, 0) -- (1.5, 0);
        \draw (-4, 0) node{$\text{\romanNumeralCaps{5}}_{#1}$};
    \end{tikzpicture}
}
\title{Tropical Donagi theorem}
\author{Felix Röhrle}
\address {Universit\"at T\"ubingen, Fachbereich Mathematik, Auf der Morgenstelle 10, 72076 T\"ubingen, Germany}
\email{roehrle@math.uni-tuebingen.de}
\author{Thomas Saillez}
\address{Universit\'e Libre de Bruxelles, D\'epartement de Math\'ematique, Campus de la Plaine – CP 210, Boulevard du Triomphe, B-1050 Bruxelles, Belgium}
\email{thomas.saillez@ulb.be}
\subjclass{14T20, 14H40}
\keywords{tropical curves, tropical abelian varieties, Prym varieties, Torelli morphism}
\date{}
\begin{document}

\begin{abstract}
    The tropical $n$-gonal construction was introduced in recent work by the first author and D.~Zakharov and structural results for $n = 2,3$ were established. In this article we explore the construction for $n = 4$ and prove a tropical analogue of Donagi's theorem which states that the tetragonal construction is a triality which preserves Prym varieties. This confirms the speculations in previous work and establishes new results on the non-injectivity of the tropical Prym\textendash Torelli morphism. Finally, we demonstrate that the tropical $n$-gonal construction is poorly behaved under edge contractions, thus preventing any immediate moduli-theoretic perspective.
\end{abstract}

\maketitle

\setcounter{tocdepth}{1}

\section{Introduction}

Tropical geometry is a branch of mathematics which replaces the role of polynomial functions in algebraic geometry with piecewise linear functions. This yields a theory of geometry which is in many ways surprisingly similar to algebraic geometry.

This paper contributes to the development of the theory of tropical Prym varieties, which are an analogue of the algebro-geometric concept of Prym varieties. These are named after the work of Prym \cite{friedrichPrym} and were reintroduced a century later by Mumford \cite{Mumford} using modern algebro geometric language. Starting with an \'etale double cover of smooth algebraic curves $\pi:\tilde C\to C$, the Prym variety $\Prym(\tilde C / C)$ is the abelian variety given by the kernel of the norm map, i.e. the kernel of the pointwise application of $\pi$ to divisor classes of $\tilde C$.

In \cite{donagiFirst} and \cite{donagiSecond}, Donagi introduced the $n$-gonal construction on pairs of maps of curves. The construction looks at the space of sections of a pair of covers $\tilde C\to C\to \mathbb{P}^1$ where the first map is an \'etale double cover and the second one is a ramified $n$-tuple cover and produces a new curve $D$ with structural map $D \to \mathbb{P}^1$ of degree $2^n$. 

Some special cases of the $n$-gonal construction have led to structural results.
The bigonal construction ($n = 2$) naturally appeared when studying the integrability of geodesic flows on $SO(n)$ in works such as \cite{haine} and \cite{Pantazis}. The Recillas' construction is inverse to the trigonal construction ($n = 3$) and appeared in the work of Recillas \cite{Recillas}, connecting Prym varieties of trigonal curves and Jacobians of tetragonal curves. The tetragonal construction ($n = 4$) was used in \cite{donagiSecond} to discover a triality of tetragonal curves having isomorphic Prym varieties, giving a better understanding of the non-injectivity of the Prym\textendash Torelli map, with the goal of solving the Schottky problem. 

On the tropical geometry side, the notions of divisor and Jacobian were introduced in the work of Mikhalkin and Zharkov \cite{MikhalkinZharkov}. In this context, a tropical curve is a metric graph and a divisor is a formal linear combination of the points of this metric graph. A suitable notion of linear equivalence of divisors is obtained with piecewise linear functions. In this language, the Jacobian of a metric graph is the group of degree 0 divisors modulo linear equivalence. The tropical Prym variety was first defined in \cite{JensenLen}. To this end consider the kernel of the pushforward of the Jacobian in a double cover of tropical curves. This kernel might have two connected components (in case the double cover was free), so one defines the tropical Prym variety as the connected component containing zero. This definition is in pure analogy to that of the algebraic Prym variety. The undilated (i.e. \enquote{free}) Prym variety was studied further in \cite{LenUlirsch, LenZakharov} and the Prym variety was shown, using dilated double covers, to be discontinuous under edge contraction in \cite{GhoshZakharov}. Finally, in \cite{MatroidalPerspective} the authors used matroids to analyse Prym varieties and provided some initial results on the non-injectivity of the tropical Prym\textendash Torelli morphism.

The first author and D.~Zakharov introduced a tropical analogue of the $n$-gonal construction in \cite{roehrleNgonal}. The definition is a combinatorial version of the algebraic construction, where the authors replaced the projective line (which has trivial algebraic Jacobian) with metric trees (which are the metric graphs with trivial tropical Jacobian).
The authors proved the tropical analogues of the main theorem for the bigonal and trigonal construction using cohomological techniques. The tetragonal case was left open as Donagi's original proof relied on cohomological considerations that are not yet available in tropical geometry (see also \cite[Section~12.8]{BirkenhakeLange} for a more recent treatment of the theorem). We still believe that the language of tropical (co)homology introduced by \cite{IKMZ} and then further developed in \cite{GrossShokrieh_homology} is suitable for reproducing the original proof in tropical language, however, this would require establishing the tropical analogues of various cohomological formulae, which lies beyond the scope of a single paper. 

In the present article, we follow the much more combinatorial proof of Donagi's theorem developed by Izadi and Lange in \cite{izadiLange}. It turns out that this proof can be adapted to the tropical setting to establish the tropical Donagi theorem. Izadi and Lange also extended Donagi's theorem to situations where the base curve is not $\mathbb{P}^1$ and we show that the tropicalisation of the proof falls flat when the base tropical curve is not a tree. 

Roughly speaking, the tropical tetragonal construction associates to a \enquote{tower} $\tilde \Gamma \to \Gamma \to K$ consisting of a double cover $\tilde \Gamma \to \Gamma$ followed by a harmonic map $\Gamma \to K$ of degree 4 a harmonic map $\tilde \Delta \to K$ of degree 16. If the tower is \emph{orientable} (\cref{def:orientable}), this data decomposes as a disjoint union of two towers $\tilde \Lambda_i \to \Lambda_i \to K$ for $i = 1,2$ with harmonic maps of degree 2 and 4 as in the initial data. In this setting the main theorem of our paper is the following:

\begin{theorem}[tropical Donagi theorem] \label{thm:main}
    Let $\tilde \Gamma \to \Gamma \to K$ be an orientable tower consisting of a free double cover $\tilde \Gamma \to \Gamma$ and a harmonic map $\Gamma \to K$ of degree 4 with dilation profile nowhere $(2, 2)$ or $(4)$ (see \cref{def:good_types}). Then the tetragonal construction produces two more towers $\tilde \Lambda_i \to \Lambda_i \to K$ for $i = 1,2$ of the same type. In this situation we have the following.
    \begin{enumerate}
    	\item The construction is a triality, meaning that tetragonal construction applied to $\tilde \Lambda_i \to \Lambda_i \to K$ gives $\tilde \Gamma \to \Gamma \to K$ and $\tilde\Lambda_{3-i} \to \Lambda_{3-i} \to K$ as its output.
    	
    	\item If additionally the curves $\tilde \Gamma$, $\tilde\Lambda_1$, and $\tilde \Lambda_2$ are all connected and $K$ is a tree, then there are isomorphisms of principally polarised tropical abelian varieties
    	\[ \Prym(\tilde \Gamma / \Gamma) \cong \Prym(\tilde \Lambda_1 / \Lambda_1) \cong \Prym(\tilde \Lambda_2 / \Lambda_2). \]
    \end{enumerate}
\end{theorem}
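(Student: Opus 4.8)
The plan is to follow the combinatorial strategy of Izadi--Lange and reduce both statements to an analysis carried out fiberwise over the base tree $K$, together with a local study at the finitely many special points where $\Gamma \to K$ dilates. Over a general point $x \in K$ the fiber of $\Gamma \to K$ is a four-element set $E_x$, the free double cover $\tilde\Gamma \to \Gamma$ equips each point of $E_x$ with a two-element fiber, and the tetragonal construction replaces this data by the set of $2^4 = 16$ sections of $\tilde\Gamma \to K$ over $x$, which is the fiber of $\tilde\Delta \to K$. First I would record that these sections form a torsor under $(\Z/2)^4$, that the deck involution of $\tilde\Gamma$ acts by flipping all four signs (an element of the even sign-change subgroup, since $4$ is even), and that the two orbits of the index-two even subgroup $(\Z/2)^3$ give two invariant classes of size $8$. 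Quotienting each class by the all-flip involution produces the degree-$4$ fibers of $\Lambda_1 \to K$ and $\Lambda_2 \to K$, each carrying its own free double cover. The content of the triality is then the classical $D_4$ phenomenon: the eight-element fibers of $\tilde\Gamma$, $\tilde\Lambda_1$, and $\tilde\Lambda_2$ over $x$ play symmetric roles.

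For part (1) I would first verify the triality on a general fiber by making the $(\Z/2)^4$-torsor bookkeeping explicit: choosing a base section identifies everything with $(\Z/2)^4$, the parity quadratic form singles out the two families, and a direct sign computation shows that applying the construction to $\tilde\Lambda_1 \to \Lambda_1 \to K$ recovers $E_x$ (hence $\tilde\Gamma \to \Gamma$) and the class of $\tilde\Lambda_2$. The more delicate point is to propagate this identification across the special points of $K$, that is, to check that the metric and dilation data glue correctly. Here I would invoke the classification of local tower types with dilation profile nowhere $(2,2)$ or $(4)$ from \cref{def:good_types}, and verify case by case, using the local pictures, that edge lengths and dilation factors match so that the fiberwise triality upgrades to an isomorphism of towers over $K$. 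Excluding the $(2,2)$ and $(4)$ profiles is precisely what forces the sections to behave as in the generic picture at every vertex.

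For part (2) I would pass to the lattice description of the tropical Prym. Recall $\Prym(\tilde\Gamma/\Gamma)$ is the connected component of $\Ker(\Nm)$ inside $\Jac(\tilde\Gamma)$; since $\tilde\Gamma \to \Gamma$ is free, this is the principally polarized tropical abelian variety whose lattice is the anti-invariant homology $H_1(\tilde\Gamma, \Z)^-$, with the polarization obtained by restricting the length form of $\tilde\Gamma$ (in the normalization that renders it principal). Because $K$ is a tree, $H_1(K, \Z) = 0$, so every cycle on each cover is detected by its behavior over the edges of $K$; this lets me encode $H_1(\tilde\Gamma, \Z)^-$, $H_1(\tilde\Lambda_1, \Z)^-$, and $H_1(\tilde\Lambda_2, \Z)^-$ as explicit sublattices of a common combinatorial chain group built from the fibers, with the section correspondence of part (1) furnishing candidate maps between them. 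I would then check these maps are isomorphisms of lattices and, finally, that they intertwine the three polarization forms.

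I expect the main obstacle to be this last point. Producing the abstract isomorphism of anti-invariant homology lattices out of the triality is relatively formal, but verifying that it preserves the principal polarization --- matching the length pairings summed over the three covers --- requires a careful local computation at each edge of $K$ and relies essentially on the hypotheses that $\tilde\Gamma$, $\tilde\Lambda_1$, and $\tilde\Lambda_2$ are connected (so that the Pryms are genuine principally polarized abelian varieties of the expected dimension) and that $K$ is a tree (so that the homology is governed entirely by the cover combinatorics). This is the tropical counterpart of matching theta divisors in the arguments of Donagi and of Izadi--Lange, and I would organize it as a reduction to the same finite list of good local types used in part (1), establishing the pairing identity type by type.
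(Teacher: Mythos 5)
Your treatment of part (1) is essentially the paper's own proof (\cref{prop:triality}): an explicit fiberwise sign computation over each point of $K$, with the types $\romanNumeralCaps{2}$ and $\romanNumeralCaps{1}$ handled by formally identifying points of a type-$\romanNumeralCaps{3}$ fiber, and compatibility with root maps and the half-edge involution checked in well-chosen coordinate systems. That part is fine.

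In part (2), however, there is a genuine gap at the step you dismiss as ``relatively formal.'' The candidate maps furnished by the section correspondence --- in the paper's notation, $S$ sending a point $z = x_1^{\epsilon_1}+\cdots+x_4^{\epsilon_4}$ of $\tilde\Lambda_1$ to the corresponding degree-$4$ divisor on $\tilde\Gamma$, and its transpose $S^T$ --- are \emph{not} isomorphisms of the anti-invariant lattices, so ``check these maps are isomorphisms of lattices and intertwine the polarizations'' fails as stated: by \cref{FormulaOneS} one has $S^T\circ S(z) = 2\big(p^\ast(p(z)) + z - \iota z\big)$, whence $s^t\circ s = 4\cdot\Id$ on the Prym (\cref{cor:4Id}), and the length pairing gets multiplied by $4$, i.e. $||s^t(D-\iota D)||^2 = 4\,||D-\iota D||^2$ (\cref{polarization}). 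The missing idea is that when $K$ is a tree the restriction $S\vert_{L_1} : L_1 \to L$ of the correspondence to the Prym lattices takes values in $2L$, so that $\psi = S/2$ is defined integrally and is the desired isomorphism of principally polarized tavs. Proving this divisibility is the actual crux (\cref{prop:divide_by_2}): because $K$ is a tree, each fiber of $\tilde\Lambda_1 \to K$ splits $\iota$-equivariantly as $E^+ \sqcup E^-$, an anti-invariant cycle $\gamma - \iota\gamma$ has even coefficient sum on each $E^+$, and one verifies per fiber type that the correspondence matrix $A$ satisfies $AU = 2UM$ with $M$ unimodular, where the columns of $U$ span the admissible coefficient vectors. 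This is also where the tree hypothesis does its real work --- not merely in making ``homology governed by cover combinatorics'' as you suggest: the paper exhibits a good tower over a base with a cycle for which the image of an anti-invariant cycle under $s^t$ has coefficients $\pm 1$ and $0$ and is therefore not divisible by $2$ in the target Prym lattice, so no rescaled isomorphism exists. Your assessment of where the difficulty lies is thus inverted: the polarization comparison is a direct slope computation via the pushforward of piecewise linear functions, while the passage from the $4\cdot\Id$ correspondence to an honest lattice isomorphism is the step that needs the new argument.
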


A large class of orientable towers is given by \cref{lem:orientable_quotient} and \cref{prop:Donagi_connected}: 
if $K$ is a tree, or more generally, when $\tilde\Gamma$ is a fibrewise quotient of the trivial octuple cover of $K$, then the tower $\tilde\Gamma \to \Gamma \to K$ is orientable. In those cases, the connectedness of $\Gam$ implies the connectedness of both $\tilde \Lambda_1 $ and $\tilde \Lambda_2 $.

\cref{thm:main} is similar to the tropical Recillas' theorem in \cite{roehrleNgonal} in the sense that both tropical theorems substitute \enquote{\'etale} and \enquote{ramification} in the algebraic theorem by \enquote{free} and \enquote{dilation} in the tropical version. This means that the tropical theorems are really analogues of the algebraic theorems and not tropicalisations: indeed, the tropicalisation of an \'etale double cover will in general have dilation.

Finally, in \cref{sec:non-cont} we study the interaction of the tetragonal construction and the Prym\textendash Torelli morphism. It turns out that the tropical $n$-gonal construction is not continuous under edge contractions and that the tetragonal construction cannot even be modified to extend \cref{thm:main} by continuity. This shows poor behaviour of the construction in families and that the assumptions on the dilation profiles in \cref{thm:main} cannot easily be lifted. 

    
    





\subsection*{Acknowledgements} The authors would like to thank Dmitry Zakharov for suggesting the proof strategy following \cite{izadiLange} as well Rémi Delaby for help with computations in Magma. The authors would also like to thank the anonymous referee for detailed feedback.
F.~R. acknowledges funding from Deutsche Forschungsgemeinschaft (DFG, German Research Foundation) SFB\textendash TRR 195 \enquote{Symbolic Tools in Mathematics and their Application}.
T.~S. acknowledges funding from the MIS grant from the Fond national de la recherche scientifique (FNRS, Belgian Research Foundation) and the ARC grant from the Université Libre de Bruxelles (ULB, university), both grants from Špela Špenko.

\section{Preliminaries}
\label{sec-preliminaries}

\subsection{Graphs, harmonic morphisms, and tropical curves}

We consider a \textit{graph} $G$ to be a set of vertices $V(G)$, a set of half-edges $H(G)$, a root map $r:H(G)\to V(G)$ and a fixed-point-free involution $H(G)\to H(G), ~h\mapsto \Bar{h}.$
A \textit{point} of $G$ is an element of $H(G)\cup V(G)$, and by abuse of notation we will usually denote the set of points by $G$ as well. 
An \textit{edge} is a set $\set{e,\Bar{e}}$ and the set of edges is denoted $E(G)$.
We assume our set of points to always be \textbf{finite}. For a vertex $v\in V(G)$ we define the \textit{tangent space} $T_v(G)=r^{-1}(v)$ and the valence of a vertex is the cardinality of its tangent space.
For a connected graph $G$, we denote its \emph{genus} by $g(G) = |E(G)| - |V(G)| + 1$.

A \textit{morphism} of graphs from $\Gtilde$ to $G$ is a function $f:\Gtilde\to G$ such that vertices are mapped to vertices and half-edges to half-edges and which commutes with both the root map and the involution. In particular, no edge is contracted.

A \textit{harmonic morphism} from $\Gtilde$ to $G$ is a pair consisting of a morphism $f:\Gtilde\to G$ and a local degree function $d_f:\Gtilde\to \N^*$ which is constant on every edge and which satisfies that for every vertex $v\in V(\Gtilde)$ and half-edge $h\in T_{f(v)}(G)$ we have $$d_f(v)=\sum_{h'\in T_v\Gtilde\cap f^{-1}(h)}d_f(h') .$$

If $G$ is connected, it follows that the quantity $\sum_{y\in f^{-1}(x)}d_f(y)$ does not depend on the choice of $x\in G$. In that case, we call this quantity the \textit{degree} of $f$ and we denote it $\text{deg}(f)$.
A harmonic morphism $f:\Gtilde\to G$ such that the corresponding function $d_f$ is the constant 1 is called \textit{free}.

A \emph{tropical curve} is a triple $\Gamma = (G, \; \ell : E(G) \to \R_{>0}, \; g : V(G) \to \N)$ consisting of a graph $G$, an \emph{edge-length function} $\ell$, and a \emph{vertex genus function} $g$. 
The underlying \emph{metric graph} of a tropical curve $\Gamma$ is the topological space 
$$\left(\coprod_{e\in E(G)} \big[0,\ell(e) \big] \sqcup \coprod_{v\in V(G)} \set{v} \right)/\simeq$$ 
where $\simeq$ identifies start point (resp. end point) of the interval $[0, \ell(e)]$ with a vertex if the corresponding vertex is the start point (resp. end point) of the edge $e$. The metric graph is indeed a metric space (with shortest path metric, following \cite[Definition~3.1.12]{buragoMetric}), but we will not use the implied distance function. We will generally abuse notation and refer to the metric space of $\Gamma$ by $\Gamma$ as well.
A representation of the metric space $\Gamma$ by a tuple $(G, \ell, g)$ is called \emph{graph model} and throughout we will always implicitly think of tropical curves as metric spaces together with the choice of a graph model. 
Given a tropical curve $(G, \ell, g)$, its \emph{combinatorial type} is the graph $G$ together with the vertex genus function $g$, i.e. the combinatorial type contains only the discrete information of the tropical curve. The \emph{genus} of a tropical curve $\Gamma$ is the genus of the underlying graph plus the total vertex genus $g(\Gamma) = g(G) + \sum_{v \in V(G)} g(v)$. 
The vertex genus function is primarily important for the construction of moduli spaces, which we will discuss in \cref{sec:non-cont}. For the tropical Donagi theorem however, it plays no role and we will assume from here on $g \equiv 0$ and drop $g$ from the notation for tropical curves unless stated otherwise. In particular, we assume from here on the genus of a tropical curve to be equal to the genus of the underlying graph.

A \emph{(harmonic) morphism of tropical curves} $\phi : \Gamma \to \Delta$ is a harmonic morphism $f$ between the underlying graphs such that for any edge $e \in E(G)$ the lengths of $e$ and $f(e)$ are related by $\ell(e) \cdot d_f(e)=\ell(f(e))$. We note that for a harmonic morphism of graphs $f$ with an edge-length function on the target, there is a unique edge-length function on the source making $f$ a harmonic morphism of tropical curves.

A \emph{double cover} is a harmonic morphism of degree 2 and it is equivalently described with an involution $\iota : \tilde\Gamma \to \tilde\Gamma$ on the source curve. We will move freely between these two descriptions. 
The edges/vertices in $\Gamma$ with a single preimage are called \emph{dilated} and those with two preimages are called \emph{undilated} or \emph{free}.

\subsection{Divisors and linear equivalence}

We now recall basic definitions of divisors. More details can be found in \cite{lenIntroductionToDivisors}.

Let $\Gamma$ be a connected metric graph. We denote $\Div(\Gamma):=\bigoplus_{p\in\Gamma}\Z$ the group of divisors of $\Gamma$. A divisor $D$ is said to be \textit{effective} if for all $p\in\Gamma$ we have $D(p)\geq 0$. 

Let $f:\Gamma\to \R$ be a continuous piecewise linear function and let $p \in \Gamma$. A small enough neighbourhood of $p$ decomposes as a finite union of disjoint open intervals and $\set{p}$. We fix an orientation on the intervals such that they are pointing away from $p$, and this fixes the sign of the slope $f'$ on those intervals. We define $$\div(f)(p)=\sum_{h\in r^{-1}(p)}f'(h),$$
where $r^{-1}(p)$ is taken as the tangent space of $p$ for a model of $\Gamma$ where $f'$ is constant on edges.
In a more concrete way, we associate to a point $p$ the sum of the outgoing slopes of $f$ at $p$. 

A piecewise linear function $f : \Gamma \to \RR$ is called \emph{rational} if all slopes are integers. A divisor $D$ is said to be \textit{principal} if there exists a rational function $f$ such that $D=\div(f)$. Principal divisors form a subgroup $\text{Prin}(\Gamma)\subseteq\Div(\Gamma)$. 
The $\emph{degree}$ of a divisor $D\in\Div(\Gamma)$ is $\deg(D)=\sum_{p\in\Gamma} D(p)$. This sum is well-defined because $D$ has finite support. A classical result is that all divisors arising from functions have degree $0$. Denoting the set of degree $0$ divisors by $\Div^0(\Gamma)$, we define the \emph{Picard group} $\Pic^0(\Gamma) = \Div^0(\Gamma) / \Prin(\Gamma)$.

Moreover, any degree 0 divisor can be written as the divisor of a (not necessarily rational) function and this function is unique up to the addition of a constant function. For any degree 0 divisor $D$ we define the \textit{Euclidean norm} 
\begin{equation} \label{eq:pol_Pic}
	||D||^2 = \int_\Gamma (f')^2
\end{equation}

where $f$ satisfies $\div(f)=D$ and the integral is taken with respect to the Lebesgue measure. This norm allows to define a scalar product on $\Div^0(\Gamma)$ which we call the \textit{polarisation}. 

If $f:\Gamma\to\Gamma'$ is a continuous map of metric graphs and $D\in\Div(\Gamma)$, the \textit{pushforward} of $D$ is the divisor $f_\ast D\in\Div(\Gamma')$ where for $p\in\Gamma'$ we have $(f_\ast D)(p)=\sum_{q\in f^{-1}(p)}D(q)$.

\subsection{Tropical Abelian varieties}

Tropical abelian varieties can be introduced in different ways. In \cite{roehrleNgonal} the following language was developed with a view towards non-principally polarised varieties such as Prym varieties of dilated double covers. We recall these notions quickly and then explain how this can be reconciled with the divisorial perspective for free double covers (and hence naturally principally polarised Prym variety).

A \emph{tropical abelian variety}, abbreviated \emph{tav}, is a triple $(\Lambda, \Lambda', [\cdot, \cdot])$, where $\Lambda$ and $\Lambda'$ are lattices of the same rank and $[\cdot, \cdot] : \Lambda \times \Lambda' \to \RR$ is a non-degenerate pairing.
The \emph{dimension} of the tropical abelian variety is the rank of the lattices $\Lambda$ and $\Lambda'$.
A \emph{polarisation} is an injective group homomorphism $\xi : \Lambda' \to \Lambda$ such that $[ \xi(\cdot), \cdot]$ is a symmetric positive definite bilinear form.
A polarisation is called \emph{principal} if it is an isomorphism.
A tav together with a polarisation is called \emph{polarised} tropical abelian variety. 

A \emph{morphism} between tavs $(\Lambda_i, \Lambda'_i, [\cdot, \cdot]_i)$ for $i = 1,2$ consists of two $\ZZ$-linear maps $f^\# : \Lambda_2 \to \Lambda_1$ and $f_\# : \Lambda'_1 \to \Lambda'_2$ such that 
\[ [f^\#(\cdot), \cdot]_1 = [\cdot, f_\#(\cdot)]_2 \]
on all of $\Lambda_2 \times \Lambda'_1$. The pair $(f^\#, f_\#)$ is an \emph{isomorphism} if and only if both $f^\#$ and $f_\#$ are isomorphisms. If the tavs $(\Lambda_i, \Lambda'_i, [\cdot, \cdot]_i)$ are additionally equipped with polarisations $\xi_i : \Lambda'_i \to \Lambda_i$, then $(f^\#, f_\#)$ is an \emph{isomorphism} of polarised tavs if it is an isomorphism of tavs which respects the polarisations in the sense that $\xi_1 = f^\# \circ \xi_2 \circ f_\#$.

\begin{remark} \label{rem:pptav}
    We emphasise that up to isomorphism, a principally polarised tav is entirely determined by the positive definite bilinear form $[\xi(\cdot), \cdot] : \Lambda' \times \Lambda' \to \RR$, which in turn is again equivalent to giving the quadratic form $\gamma \mapsto [\xi(\gamma), \gamma]$. This statement does not hold for non-principally polarised tavs.
\end{remark}

\subsection{Jacobians}
Let $\Gamma$ be a tropical curve. Choose an orientation for the edges of the underlying graph. Then there are two canonically associated lattices to $\Gamma$, namely harmonic 1-forms with integer coefficients
\[ \Omega^1(\Gamma) = \Big\{ \sum_e a_e de \mathrel{\Big|} \text{for every vertex } v: \sum_{e \text{ entering } v} a_e - \sum_{e \text{ leaving } v} a_e = 0 \Big\}  \]
and the first homology group
\[ H_1(\Gamma, \ZZ) = \bigg\{ \sum_e b_e e \mathrel{\bigg|} 
\begin{minipage}{6.2cm}
    $\partial(\sum b_e e) = 0$, where $\partial$ is generated by $\partial(e) = \text{ end vertex } - \text{ start vertex of } e$    
\end{minipage}  \bigg\}. \]
These two lattices define the \emph{(tropical) Jacobian} of $\Gamma$ as follows 
\[ \Jac(\Gamma) = \big( \Omega^1(\Gamma), H_1(\Gamma, \ZZ), [\cdot, \cdot] \big), \]
where 
$[\cdot, \cdot] : \Omega^1(\Gamma) \times H_1(\Gamma, \ZZ) \longrightarrow \RR$ is the integration pairing generated by
\[ [ de, e'] = \int_{e'} de = \begin{cases}
    \ell(e) & \text{if $e = e'$ and the orientations match}, \\
    -\ell(e) & \text{if $e = e'$ with opposing orientation}, \\
    0 & \text{else.}
\end{cases} \]
The Jacobian is a tropical abelian variety of dimension $g(\Gamma)$ and it carries a natural principal polarisation, which is given by the isomorphism $H_1(\Gamma, \ZZ) \to \Omega^1(\Gamma)$ induced by $e \mapsto de$.
The tropical Abel\textendash Jacobi theorem states that $\Jac(\Gamma) \cong \Pic^0(\Gamma)$ as principally polarised tropical abelian varieties and the idea behind this isomorphism is as follows: 
let $D\in\Div^0(\Gamma)$ be a divisor and choose a model of $\Gamma$ such that $D$ is supported on the vertices of $\Gamma$. Then there exists a (not necessarily rational) function $f:\Gamma\to \R$ which is piecewise linear, unique up to addition of a constant and such that $\div(f)=D$. This function has constant slopes on the edges of $\Gamma$, so the data of $f:\Gamma\to\R$ modulo the constant is equivalent to the slopes of $f$ on the edges, i.e. $f$ can be summarised as $\sum_{e\in E}f'(e) \cdot e\in\R^E$. This computation turns degree 0 divisors into formal sums of edges and it preserves the polarisation in the sense that 
\[ ||D||^2 = \Big[\zeta\Big(\sum_{e\in E}f'(e) \cdot e \Big), \sum_{e\in E}f'(e) \cdot e \Big] \]
where $[\zeta(\cdot), \cdot]$ is extended to real coefficients by linearity. More details about this isomorphism are the subject of \cite[Sections~2 and~3]{BakerFaberMetric}.

\subsection{Pryms}

Let $\pi : \tilde \Gamma \to \Gamma$ be a double cover of tropical curves. It induces the \emph{norm homomorphism} 
\[ \Nm = (\pi^\ast, \pi_\ast) : \Jac(\tilde\Gamma) \longrightarrow \Jac(\Gamma). \]
We define the \emph{(divisorial) Prym variety} as the kernel of the norm homomorphism, i.e.
\[ \Prym(\tilde\Gamma / \Gamma) = (\Ker \Nm)_0 = \big( (\Coker \pi^\ast)^\tf, \Ker \pi_\ast, [\cdot, \cdot] \big),  \]
where $()^\tf$ denotes the torsion free quotient and the pairing $[\cdot, \cdot]$ is simply the restriction of the integration pairing on $\Jac(\tilde\Gamma)$.
The Prym variety is a tropical abelian variety of dimension $g(\tilde \Gamma) - g(\Gamma)$.
We endow $\Prym(\tilde\Gamma / \Gamma)$ with an induced polarisation by pulling back the principal polarisation of $\Jac(\tilde\Gamma)$, i.e.
\[ \xi : \Ker \pi_\ast \lhook\joinrel\longrightarrow H_1(\tilde \Gamma, \ZZ) \longrightarrow \Omega^1(\tilde \Gamma) \xtwoheadrightarrow[]{} (\Coker \pi^\ast)^\tf. \]
This induced polarisation on Prym is in general not principal. For free double covers $\tilde \Gamma \to \Gamma$, which is the setting of our main theorem, the induced polarisation is always twice a principal polarisation $\zeta = \xi / 2$ and thus \cref{rem:pptav} applies.
In general, \cite{roehrleNgonal} provides the construction of the \emph{continuous Prym variety}, which is a finite cover of $\Prym(\tilde\Gamma / \Gamma)$ and which naturally carries an induced polarisation which is twice a principal polarisation.

In view of our examples in \cref{sec:examples} we will now describe the bilinear form $[\zeta(\cdot), \cdot]$ for $\Prym(\tilde\Gamma / \Gamma)$ of a free double cover in more detail. 
For this we use the following convention. If $e \in E(\Gamma)$ is an undilated edge, endowed with an orientation, then we denote the two preimages of $e$ by $\te^+$ and $\te^-$ in some order and endow them with the orientation compatible with that of $e$, meaning such that $\pi_\ast(\te^\pm) = e$.
In \cite{roehrleNgonal} the first author and Zakharov presented an explicit basis for $\Ker \pi_\ast$ consisting of elements of the form 
\[ \alpha = \sum_{e \in E_\ud(\Gamma)} a_e(\tilde e^+ - \tilde e^-) \]
where $E_\ud(\Gamma)$ is the set of undilated edges of $\Gamma$. We obtain the following formula:
\begin{equation} \label{eq:pairing_Prym}
    \bigg[\zeta \Big(\sum_e a_e (\tilde e^+ - \tilde e^-) \Big), \sum_e b_e (\tilde e^+ - \tilde e^-) \bigg] = \sum_{e \in E_\ud(\Gamma)} a_e b_e \ell(e).
\end{equation}

For the proof strategy of \cref{thm:main}, however, the divisorial view on Prym is more beneficial. To make this precise, note that the (divisorial) Prym variety embeds into $\Jac(\tilde\Gamma)$ simply by restricting the pairing. 
Hence the isomorphism $\Jac(\tilde \Gamma) \cong \Pic^0(\tilde \Gamma)$ restricts to an isomorphism of principally polarised tavs between $\Prym(\tilde \Gamma / \Gamma)$ and the subvariety of \emph{antisymmetric divisors} in $\Pic^0(\tilde\Gamma)$, i.e. divisors of the form $D - \iota_\ast D$, where $D \in \Div(\tilde \Gamma)$ can be taken effective.

\section{The tropical tetragonal construction}

In \cite[Section~2]{roehrleNgonal} the authors defined the tropical $n$-gonal construction for general $n$. 
Although we will want to apply the construction to metric graphs in order to state and prove our main theorem, the construction itself is purely combinatorial and hence this section will be phrased in terms of (unmetrised) graphs.
We will recall the $n$-gonal construction in full generality before we specialise to $n = 4$. It turns out that the construction is very poorly behaved from a moduli-theoretic point of view because it is not compatible with edge contractions in general. We will explore these shortcomings in \cref{sec:non-cont} and demonstrate that no continuation of the $n$-gonal construction through edge contractions can be found.

\subsection{The tropical $n$-gonal construction}

We may define the following notions of pullback and pushforward: let $f : G \to K$ and $\pi : \tilde G \to G$ be harmonic maps of graphs. We will always assume that $K$ is connected. Using $\Gtilde$, $G$, and $K$ to represent the respective sets of points of these graphs, define

\begin{equation*}
\begin{aligned}
    f^*:\Z^K&\longrightarrow\Z^G,   
    &&\qquad\text{and}\qquad &
    \pi_*:\Z^{\widetilde{G}} &\longrightarrow\Z^G,\\
     x&\longmapsto \sum_{y\in f^{-1}(x)}d_f(y)y,    
    &&&x &\longmapsto\pi(x).
\end{aligned}
\end{equation*}
The following is the tropical Donagi construction as it was defined in \cite[Section~2.2]{roehrleNgonal}.

\begin{definition}[tropical Donagi construction] \label{def:Donagi_construction}
    Let $\pi : \tilde G \to G$ be a double cover and $f : G \to K$ a harmonic map of degree $n$. We define the set
    \[\Ptilde_{\Gtilde} = \set{D \in \Z^{\widetilde{G}} \mid D\geq 0 \text{ and } \exists x\in K \text{ such that } \pi_*(D)=f^*(x)}.\]
    Note that the $x$ in the definition of $\Ptilde_{\Gtilde}$ is necessarily unique. We say that $D \in \Ptilde_{\Gtilde}$ is a vertex (resp. half-edge) if and only if $x$ is a vertex (resp. half-edge). Define the root and involution maps of $\tilde P_{\tilde G}$ by
    \[ r\Big(\sum a_h h \Big) = \sum a_h r(h) \qquad \text{ and } \qquad \overline{\sum a_h h } = \sum a_h \overline{h}. \]
    This gives $\Ptilde_{\Gtilde}$ the structure of a graph.    
    We associate the map $\widetilde{p}:\Ptilde_{\Gtilde}\to K$ mapping $D$ to $x$ if $\pi_*(D)=f^*(x)$. 
    Furthermore, we define the local degree function 
    \[ d_{\tilde p}(D) = \prod_{y \in f^{-1}(x) \text{ free}} \binom{d_f(y)}{a_{\tilde y^+}} \prod_{y \in f^{-1}(x) \text{ dilated}} 2^{d_f(y)}. \]
\end{definition}

By \cite[Proposition~2.1]{roehrleNgonal} $\tilde p$ is a harmonic morphism of degree $2^n$. 
The Donagi construction on a tower of tropical curves is defined as the Donagi construction on the underlying graphs and the edge length function of $\tilde P_{\tilde G}$ is the only one such that the morphism $\tilde p : \Ptilde_{\Gtilde}\to K$ is harmonic. This will be explored in \cref{sec-thm}.
\medskip

For the rest of the paper we specialise to $n=4$ and recall the naming system \cite[Definition~2.8]{roehrleNgonal} for the combinatorial types which occur in the context of the tetragonal construction. We use the term \emph{tower} to refer to a pair of covers $\Gtilde\to G\to K$ where $\Gtilde\to G$ is a double cover and $G\to K$ is a harmonic map of degree 4.

\begin{definition} \label{def:good_types}
    Let $f : G \to K$ be a harmonic map of degree 4. We say that a point $x \in K$ is of type $\romanNumeralCaps{1}$, $\romanNumeralCaps{2}$, $\romanNumeralCaps{3}$, if the dilation profile of $f$ over $x$ is $(3, 1)$, $(2, 1, 1)$, $(1, 1, 1, 1)$, respectively, see \cref{fig:fibre_types}. A tower $\tilde G \to G \to K$ is called \emph{generic} if every point of $K$ is of type $\romanNumeralCaps{1}$, $\romanNumeralCaps{2}$, or $\romanNumeralCaps{3}$ with respect to $G \to K$ and the double cover $\Gtilde\to G$ is free.
\end{definition}

\begin{figure}
    \begin{center}
    	\begin{tikzpicture}
    		\draw (-2.5, 0) node[anchor = center] {$\tilde G$};
    		\draw (0, 0) node[anchor = center] {$G$};
    		\draw (2, 0) node[anchor = center] {$K$};
    		\draw[->, black] (-1.5, 0) -- (-0.5, 0);
    		\draw[->, black] (0.5, 0) -- (1.5, 0);
    		\draw[white] (-4, 0.75) node{$\text{\romanNumeralCaps{3}}$};
    	\end{tikzpicture}
    
    	\begin{tikzpicture}
    		\draw[black](-5,0) -- (5,0);
    		\filldraw[white](0,-0.2);
    	\end{tikzpicture}
        \towerIV{0}{0}
        \begin{tikzpicture}
         \draw[black](-5,0) -- (5,0);
         \filldraw[white](0,-0.2);
        \end{tikzpicture}
        \towerII{0}{0}
        \begin{tikzpicture}
         \draw[black](-5,0) -- (5,0);
         \filldraw[white](0,-0.2);
        \end{tikzpicture}
        \towerI{0}
    \end{center}
    
    \caption{Possible fibres in a generic tower. Indicated are dilation factors with respect to $K$ and dilation is additionally visualised by thickness.}
    \label{fig:fibre_types}
\end{figure}
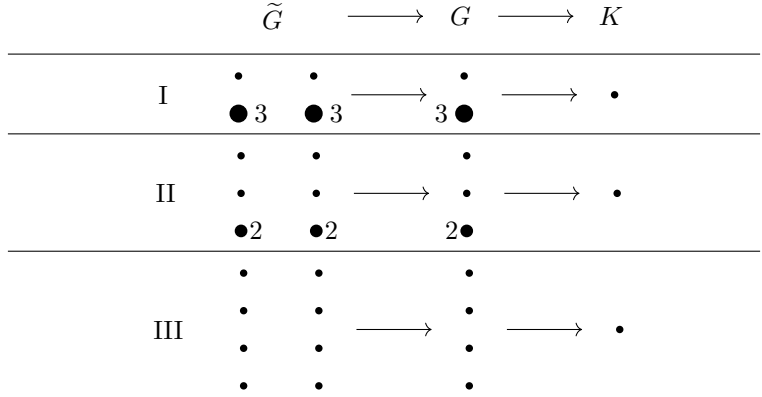

\subsection{Orientability and connectedness}

In \cite[Section 2.2]{roehrleNgonal}, the authors factorise $\tilde p : \tilde P_{\tilde G}\to K$ as 
\begin{equation}\label{eq:factorise_tetragonal}
	\Ptilde_{\Gtilde}\longrightarrow \Ptilde_{\Gtilde}/\iota\longrightarrow\tilde K\longrightarrow K,
\end{equation}
where the respective degrees of these harmonic maps are 2, 4, and 2, respectively.
We will now explain this factorisation.

First, we abuse notation and use $\iota$ to indicate the involution defining the double cover $\Gtilde \to G$ as well as the involution on $\Ptilde_{\Gtilde}$ induced by it via 
\[ \iota\Big( \sum a_h h\Big) = \sum a_h \iota(h). \]
The first map in \cref{eq:factorise_tetragonal} is then the quotient of $\tilde P_{\tilde G}$ by $\iota$. 

Next, $\tilde K \to K$ is the orientation double cover, which is constructed by a fibrewise equivalence relation. In the case of a free double cover in the original tower, the relation is defined as follows. Fix $x\in K$ and let $D = \sum_{y\in f^{-1}(x)}a_{\tilde y^+}\tilde y^+ +a_{\tilde y^-}\tilde y^-$ and  $E = \sum_{y\in f^{-1}(x)}b_{\tilde y^+}\tilde y^+ +b_{\tilde y^-}\tilde y^-$ be points in $\tilde P_{\tilde G}$. 
Now $D$ and $E$ are defined as equivalent if and only if $\sum_y a_{\tilde y^+}-b_{\tilde y^+}$ is even. Applying $\iota$ to $D$ swaps all the coefficients $a_{\tilde y^+}$ with the coefficients $a_{\tilde y^-}$. Remembering that $\sum a_{\tilde y^+}+a_{\tilde y^-}=4$ we see that $\sum a_{\tilde y^+}$ and $\sum a_{\tilde y^-}$ always have the same parity. This explains the rest of \cref{eq:factorise_tetragonal}. Note that $\tilde K \to K$ is always free.

\begin{definition} \label{def:orientable}
    A tower $\widetilde{G}\to G\to K$ is \textbf{orientable} if its orientation double cover $\tilde K \to K$ is trivial.
\end{definition}

If a tower is orientable, then the tetragonal construction decomposes as a disjoint union of two towers with degrees 2 and 4 and each of these two towers is defined over the connected components of $\tilde K = K \sqcup K$, i.e. over $K$.
More explicitly, when $\Gtilde\to G\to K$ is an orientable tower, the tetragonal construction provides $\Ptilde_{\Gtilde}\to K\coprod K\to K$, that we can split into two covers $\tilde C_i\to K$ for $i=1,2$. Since 4 is even, applying the involution $\iota$ of $\Gtilde$ on the orientation double cover is the identity, therefore $\iota$ restricts to both $\tilde C_1$ and $\tilde C_2$, allowing to produce two new towers $\tilde C_i\to C_i\to K$ for $i=1,2$. 
\medskip

Whenever $K$ is a tree, the freeness of the orientation cover $\tilde K\to K$ implies that it is trivial. In order to describe a larger class of examples of orientable towers, we need the following terminology.

\begin{definition}
	We say that $g:\Gtilde\to K$ is a \emph{fibrewise quotient} of $h:H\to K$ if both are harmonic maps of the same degree and there is a quotient map $s:H\to\Gtilde$ such that $g\circ s = h$ and for every point $p\in \Gtilde$ we respect the local degree function, i.e. $d_g(p) = \sum_{i\in s^{-1}(p)}d_h(i).$
	
	For a tower $\Gtilde\to G\to K$ to be a \emph{fibrewise quotient of the trivial octuple cover}, we want the following commutative diagram:	
	$$\begin{tikzcd}[column sep = small]
		\coprod_{i=1}^8 K\arrow[rr]\arrow[d]\arrow[drr] &  & \Gtilde\arrow[d] \\
		\coprod_{i=1}^4 K \arrow[rr] \arrow[dr]& & G\arrow[dl] \\
		& K, &
	\end{tikzcd}$$
	where $\Gtilde\to G$ is a quotient of $\coprod_{i=1}^8 K\to G$ and $G\to K$ is a quotient of $\coprod_{i=1}^4 K\to K$.	
\end{definition}

\begin{lemma}\label{lem:orientable_quotient}
    Let $\widetilde{G}\to G\to K$ be a tower such that $\Gtilde\to G$ is a free double cover and $\Gtilde\to K$ is a fibrewise quotient of the trivial octuple cover. Then this tower is orientable.
\end{lemma}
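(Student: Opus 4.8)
The plan is to exhibit an explicit global section of the orientation double cover $\tilde K \to K$. Since $\tilde K \to K$ is always a free double cover (as noted before \cref{def:orientable}) and $K$ is connected, the existence of a section is equivalent to triviality: a section $\sigma : K \to \tilde K$ has image a subgraph mapping isomorphically onto $K$, and because the cover has degree $2$ with local degree $1$ everywhere, the complement $\tilde K \setminus \sigma(K)$ is the second sheet, so $\tilde K = \sigma(K) \sqcup (\tilde K \setminus \sigma(K)) \cong K \sqcup K$. Thus orientability will follow once a section is produced.

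To build the section I would use the trivialization furnished by the octuple cover. Write $s : \coprod_{i=1}^8 K \to \Gtilde$ and $t : \coprod_{i=1}^4 K \to G$ for the two quotient maps in the defining diagram, and let $q : \coprod_{i=1}^8 K \to \coprod_{i=1}^4 K$ be the left vertical map. Since source and target of $q$ are disjoint unions of copies of the connected graph $K$ and $q$ is a degree $2$ harmonic map with local degree $1$, it restricts to an isomorphism on each component; hence over each of the four sheets of $\coprod_4 K$ lie exactly two sheets of $\coprod_8 K$, and I can select one of them globally, say $K_j^+$, for each $j = 1, \dots, 4$. Identifying $K_j^+ \cong K$ and composing the inclusion $K_j^+ \hookrightarrow \coprod_8 K$ with $s$ yields a graph morphism $\tau_j : K \to \Gtilde$; commutativity of the diagram with the maps to $K$ shows $\tau_j$ is a section of $\Gtilde \to K$. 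I would then set $D_x^0 = \sum_{j=1}^4 \tau_j(x) \in \Z^{\Gtilde}$. Using $\pi \circ s = t \circ q$ and the fact that $t$ realizes $f$ as a fiberwise quotient (so $d_f(y)$ is the number of sheets of $\coprod_4 K$ over $y$), one computes $\pi_\ast(D_x^0) = \sum_{j=1}^4 (t\circ\kappa_j)(x) = f^\ast(x)$, where $\kappa_1,\dots,\kappa_4$ are the inclusions of the four sheets of $\coprod_4 K$. Hence $D_x^0$ is an effective divisor lying in $\Ptilde_{\Gtilde}$ over $x$, and since each $\tau_j$ is a graph morphism, $x \mapsto D_x^0$ is a graph morphism $K \to \Ptilde_{\Gtilde}$ that is a section of $\tilde p$. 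Post-composing with the projection $\Ptilde_{\Gtilde} \to \tilde K$ from the factorization \eqref{eq:factorize_tetragonal} gives the desired section of $\tilde K \to K$, and the argument is complete.

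The main obstacle — and the only place the hypothesis is genuinely used — is the \emph{global} choice of the sheets $K_j^+$: this is possible precisely because the trivial octuple cover presents $\coprod_8 K \to \coprod_4 K$ as a split (disconnected) double cover, so a coherent "positive" preimage can be chosen over all of $K$ at once rather than merely locally. Everything else is routine bookkeeping: checking that $D^0$ commutes with the root and involution maps of $\Ptilde_{\Gtilde}$ (immediate from the $\tau_j$ being graph morphisms), and that it lands in a single parity class of $\tilde K$ (automatic once $D^0$ is a continuous section of $\tilde p$). I would also verify the fiber identity $\pi_\ast D_x^0 = f^\ast x$ directly at the dilated points (types $\romanNumeralCaps{1}$ and $\romanNumeralCaps{2}$), where several of the $\tau_j$ collide, to confirm that the resulting coefficients still sum to the local degrees $d_f(y)$; this is the one spot where I would be careful before declaring the section well defined.
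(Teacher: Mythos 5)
Your proof is correct, but it takes a genuinely different route from the paper's. The paper argues by contradiction: if the orientation cover $\tilde K \to K$ were connected, its restriction to some cycle $S^1 \subseteq K$ would be the connected double cover of $S^1$; restricting the whole diagram to that cycle and applying the tetragonal construction produces a triangle $\coprod_{i=1}^{16} S^1 \to S^1 \to S^1$ of free covers in which the trivial degree-$16$ cover would have to factor through a connected double cover, contradicting the representation of $\pi_1(S^1)$. You instead construct an explicit trivialization: choosing one sheet $K_j^+$ over each sheet of $\coprod_{i=1}^4 K$ --- possible \emph{globally} precisely because the octuple cover is split, which is indeed the only place the hypothesis enters --- gives four sections $\tau_j$ of $\Gtilde \to K$, and $D^0_x = \sum_j \tau_j(x)$ is a graph-morphism section of $\tilde p : \Ptilde_{\Gtilde} \to K$; composing with the projections from \cref{eq:factorize_tetragonal} splits $\tilde K \to K$, and a free double cover of a connected graph with a section is trivial. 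Your closing worry about dilated points is unfounded: the fiberwise quotient property of $t : \coprod_{i=1}^4 K \to G$ says exactly that $d_f(y) = \#\{j : t(\kappa_j(x)) = y\}$, so the computation $\pi_\ast D^0_x = \sum_j t(\kappa_j(x)) = f^\ast(x)$ holds verbatim at every point, dilated or not, with coinciding $\tau_j(x)$ simply contributing multiplicity. The one reading you should make explicit is that the left vertical arrow of the defining diagram is the standard projection pairing the sheets indexed $\pm j$ (this is the paper's intent, compare the indexing by $N = \{\pm 1, \dots, \pm 4\}$ in the proof of \cref{prop:Donagi_connected}); granting that, \enquote{exactly two sheets over each sheet} is immediate, whereas deducing it from harmonicity alone is slightly delicate since degree is only defined over a connected target. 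As for what each approach buys: the paper's argument is short and purely topological, while yours is constructive --- it exhibits an actual section, and your $D^0$ is in fact a global coherent choice of the points $\sum_{i=1}^4 x_i^+$ of \cref{notation}, so it simultaneously pins down the component $\tilde C_1$ of the output, which the contradiction argument does not provide.
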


\begin{proof}
    Suppose the orientation double cover is connected. Since it is free, there must exist a cycle $S^1\subset K$ such that restricting the orientation double cover to that cycle gives the connected double cover of $S^1$.
    Then, we restrict the quotient to that $S^1$ to obtain the diagram 
    
    $$\begin{tikzcd} [column sep=small]
            \coprod_{i=1}^8 S^1\arrow[rr]\arrow[d, "2"']\arrow[drr] &  & \tilde D\arrow[d, "2"] \\
            \coprod_{i=1}^4 S^1 \arrow[rr] \arrow[dr, "4"']& & D\arrow[dl, "4"] \\
            & S^1, &
    \end{tikzcd}$$         
    where the degrees are as indicated and the first map is not harmonic.
    Applying the tetragonal construction to both $\coprod_{i=1}^{8}S^1\to \coprod_{i=1}^{4}S^1\to S^1$ and $\widetilde{D}\to D\to S^1$ gives $$\Ptilde_{\coprod_{i=1}^{8}S^1}=\coprod_{i=1}^{16}S^1\longrightarrow \Ptilde_{\tilde D} \overset{8}{\longrightarrow} S^1 \overset{2}{\longrightarrow} S^1,$$
    where the first map comes from the quotient $\coprod_{i=1}^{8}K\to\widetilde{G}$ and the last one is the orientation double cover of $\Ptilde_{\tilde D}$. Nonetheless, it is impossible to obtain a commutative triangle 
    \begin{center}
        \begin{tikzcd}
            \coprod_{i=1}^{16} S^1\arrow["8", r]\arrow["16"', d] & S^1\arrow["2",dl]\\
            S^1
        \end{tikzcd} 
    \end{center}
    of free covers because the fundamental group is represented trivially by the degree 16 cover but non-trivially by the double one. Therefore, the orientation double cover must be disconnected and hence the trivial double cover.
\end{proof}

\begin{remark}
	The tetragonal construction applied to a quotient of the trivial octuple cover will output a quotient of the trivial sexdecuple cover and thus by \cref{lem:orientable_quotient} we know that the tetragonal construction will provide two towers that are quotients of the trivial octuple cover.
\end{remark}

\begin{notation}\label{notation}
    In a generic orientable tower $\Gtilde\to G\to K$ and a fixed point $x\in K$ of fibre type $\romanNumeralCaps{3}$, we name the points in the fibre over $x$ as $x_1,x_2,x_3,x_4\in G$. The preimages of $x_i$ are named $x_i^+$ and $x_i^-$. We use the convention that $\sum_{i=1}^4x_i^+$ is a point of $\tilde C_1$ (one of the two outputs towers of the tetragonal construction). 
    For any $z =x_1^{\epsilon_1}+x_2^{\epsilon_2}+x_3^{\epsilon_3}+x_4^{\epsilon_4} \in \tilde P_{\tilde G}$, we use the short-hand notation $z=x^{\epsilon_1\epsilon_2\epsilon_3\epsilon_4}$.

    If the point $x$ is of fibre type $\romanNumeralCaps{1}$ or $\romanNumeralCaps{2}$ we give the dilated point in $f^{-1}(x)$ as many names $x_i$ as the dilation factor of the point. For example one can have a fibre of type $\romanNumeralCaps{2}$ with $x_1=x_2$ in $G$ and with that choice $x_1^+=x_2^+$, $x_1^-=x_2^-$ in $\Gtilde$. It also implies equalities in the tetragonal construction such as $x^{+-+-}=x^{-++-}$.

    Such choice of naming the fibres is called a coordinate system.
\end{notation}

\begin{example}
    The tower $S^1 \to S^1 \to S^1$  consisting of the connected free double cover of $S^1$ followed by the connected free quadruple cover of $S^1$ is not orientable. Indeed, if we label the eight preimages of a fixed point (in order) $x\in S^1$ by $x_1^+,\dots, x_4^+,x_1^-,\dots,x_4^-$, then the action of the generator of $\pi_1(S^1)$ turns the divisor $x_1^++x_2^++x_3^++x_4^+$ into $x_2^++x_3^++x_4^++x_1^-$, which maps to two different points in the orientation double cover. Therefore, the orientation double cover is the connected free double cover of $S^1$.
\end{example}

The following proposition is the main structural result on the combinatorial properties of the tetragonal construction and establishes the first part of \cref{thm:main}.

\begin{proposition} \label{prop:triality}
    The tropical tetragonal construction is a triality on orientable generic towers.  
    More explicitly, if $\tilde G \to G \to K$ is orientable and generic and $\tilde C_i\to C_i\to K$ for $i=1,2$ are the output towers of the tetragonal construction, then applying the tetragonal construction to $\tilde C_i\to C_i\to K$ outputs $\Gtilde\to G \to K$ and $\tilde C_{3-i}\to C_{3-i}\to K$.
\end{proposition}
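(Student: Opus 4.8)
The plan is to verify the claim one fiber at a time over each point $x\in K$ and then glue. The formation of $\Ptilde_{\Gtilde}$, its involution $\iota$, the orientation cover $\tilde K\to K$, and the ensuing decomposition $\Ptilde_{\Gtilde}=\tilde C_1\sqcup\tilde C_2$ coming from orientability are all defined fiberwise, with the root and involution maps acting coordinatewise (see \cref{def:Donagi_construction} and \cref{eq:factorize_tetragonal}). Moreover $C_i=\tilde C_i/\iota$, so the deck transformation of the double cover $\tilde C_i\to C_i$ is again (the restriction of) $\iota$. Consequently it suffices to produce, over each $x$, bijections of fibers that are equivariant for the relevant involutions and compatible with the local degree functions and with the coordinatewise root maps; these then automatically assemble into the required isomorphisms of towers. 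Throughout I fix a coordinate system as in \cref{notation} and, once and for all, the sign convention that $\sum_i x_i^+$ lies in $\tilde C_1$. Since \cref{prop:triality} is a statement about graphs, no edge lengths enter.

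First I treat a fiber of type $\romanNumeralCaps{3}$, which is the heart of the argument. Here $f^{-1}(x)=\set{x_1,x_2,x_3,x_4}$ is free, so a point of $\Ptilde_{\Gtilde}$ over $x$ is a sign vector $x^{\epsilon_1\epsilon_2\epsilon_3\epsilon_4}\in\set{+,-}^4$ (the effective divisor with support $\set{x_1^{\epsilon_1},\dots,x_4^{\epsilon_4}}$), the involution $\iota$ is the global sign flip, and the orientation cover records the parity of the number of $+$'s. Thus $\tilde C_1$ (resp.\ $\tilde C_2$) consists of the eight even (resp.\ odd) sign vectors and $C_i=\tilde C_i/\iota$ has four points. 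A point of $\Ptilde_{\tilde C_1}$ over $x$ is then a choice of one even vector in each of the four $\iota$-classes, i.e.\ a section $S$ of $\tilde C_1\to C_1$ over the fiber; the new involution is the global flip $S\mapsto\iota(S)$, and the new orientation splits the sixteen sections by a parity condition into $\tilde D_1\sqcup\tilde D_2$ with eight each. I then write down the two bijections realizing the triality: to a section $S\in\tilde D_1$ I associate the unique point of $\tilde G$ lying in the common support of the four divisors making up $S$, and to a section $S\in\tilde D_2$ I associate its coordinatewise majority vector, equivalently the odd vector whose four neighbours at Hamming distance one constitute $S$. A short computation shows that the sections possessing a common support point are exactly those in $\tilde D_1$ and biject with the eight points of $\tilde G$ over $x$, while those in $\tilde D_2$ have empty common support, admit a strict three-to-one majority in every coordinate, and biject with the eight odd vectors of $\tilde C_2$ over $x$. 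Both bijections intertwine the new involution with $\iota$, hence descend to $D_1\cong G$ and $D_2\cong C_2$.

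Next I handle the dilated fibers of types $\romanNumeralCaps{1}$ and $\romanNumeralCaps{2}$. Following \cref{notation}, such a fiber is obtained from the type $\romanNumeralCaps{3}$ picture by repeating the name of each dilated point according to its dilation factor and identifying the corresponding sign coordinates; e.g.\ over a type $\romanNumeralCaps{2}$ point with $x_1=x_2$ dilated one identifies $x^{+-\ast\ast}$ with $x^{-+\ast\ast}$. Under these identifications the even and odd sign vectors collapse, and the binomial and $2^{d_f}$ factors in the local degree function $d_{\tilde p}$ of \cref{def:Donagi_construction} count exactly the number of type $\romanNumeralCaps{3}$ vectors lying over each collapsed point. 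I therefore need only verify that the two bijections above are compatible with these identifications and carry the local degree function of the second construction to the local degrees of $\tilde G\to K$ and $\tilde C_2\to K$ (for the free cover $\Gtilde\to G$ these are simply the dilation factors $d_f$). Finally, compatibility with root maps is formal: both the common-support and the majority operations commute with applying the root map $r$ coordinatewise, so the fiber bijections over a half-edge agree, after $r$, with those over its endpoints. This glues the fiberwise data into isomorphisms of towers, showing that the tetragonal construction applied to $\tilde C_1\to C_1\to K$ returns $\Gtilde\to G\to K$ and $\tilde C_2\to C_2\to K$; the case $i=2$ is identical after interchanging the two output sheets.

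The main obstacle is the bookkeeping over the dilated fibers of types $\romanNumeralCaps{1}$ and $\romanNumeralCaps{2}$: the clean description of $\Ptilde_{\tilde C_1}$ as sections of a double cover degenerates once coordinates are identified, and one must check not only that the common-support and majority bijections remain well defined but, crucially, that they reproduce the prescribed local degree function rather than merely the underlying sets of points, and that the new involution remains fixed-point free so that each $\tilde D_i\to D_i$ is genuinely a double cover. The other delicate point is keeping the orientation sign conventions consistent through both rounds of the construction, namely the choice of distinguished preimage $\tilde y^+$ on $C_1$ inherited from the convention on $\Gtilde$; this is what pins down which output sheet is $\tilde G$ and which is $\tilde C_{3-i}$.
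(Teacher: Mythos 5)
Your proposal is correct and takes essentially the same approach as the paper: the published proof also works fiberwise, sending the even component of $\Ptilde_{\tilde C_1}$ to $\Gtilde$ via the unique common support point and the odd component to $\tilde C_2$ via the points appearing three times (your majority rule), then extends to types $\romanNumeralCaps{1}$ and $\romanNumeralCaps{2}$ through the identifications of \cref{notation} and glues by observing that the rules are coordinate-free and compatible with the root maps and the half-edge involution. The only cosmetic difference is that the paper tabulates all sixteen sums explicitly, whereas you phrase the same bijections abstractly as sections of $\tilde C_1 \to C_1$ with a Hamming-distance description.
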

\begin{proof}
    Let $\tilde G \to G \to K$ be a tower and let $x \in K$ be a point (vertex or edge). 

    We first assume that the type of $x$ is $\romanNumeralCaps{3}$. Then we choose a naming of the fibre $\{x_1^\pm, x_2^\pm, x_3^\pm, x_4^\pm \} \subseteq \tilde G$. We write $x^{\pm \pm\pm \pm} = x_1^\pm + x_2^\pm + x_3^\pm + x_4^\pm$ as short-hand notation. With this the even component $\tilde C_1$ of the tetragonal construction has as a fibre over $x$ (ignoring the coloured boxes for now):
    \begin{center}
        \begin{tikzcd}[row sep = 0.3cm, column sep = tiny, execute at end picture={
            \draw[draw=RoyalBlue] (B.south west) rectangle (C.north east);
            \draw[draw=LimeGreen] ($(A.south west) + (-0.1, -0.1)$) rectangle ($(C.north east) + (0.1, 0.1)$);
            \draw[draw=RoyalBlue] (E.south west) rectangle (F.north east);
            \draw[draw=LimeGreen] ($(D.south west) + (-0.1, -0.1)$) rectangle ($(F.north east) + (0.1, 0.1)$);}]
             x^{++++} & |[alias = A]| x^{++--} & |[alias = B]| x^{+-+-} & |[alias = C]| x^{+--+} \\ 
             x^{----} & |[alias = D]| x^{--++} & |[alias = E]| x^{-+-+} & |[alias = F]| x^{-++-}
        \end{tikzcd}
    \end{center}
	The involution $\iota$ on $\tilde C_1$ is given by swapping the rows.
     Taking the top row as the "positive" points, we compute the fibre over $x$ of the even component of the tetragonal construction applied to $\tilde C_1 \to C_1 \to K$ and define a map to $\tilde G$ as follows:
    \begin{center}
        \begin{tikzcd}[row sep = 0.1cm, column sep = tiny, execute at end picture={
            \draw[draw=RoyalBlue] (A.south west) rectangle (B.north east);
            \draw[draw=LimeGreen] ($(A.south west) + (-0.1, -0.1)$) rectangle ($(C.north east) + (0.1, 0.1)$);
            \draw[draw=RoyalBlue] (D.south west) rectangle (E.north east);
            \draw[draw=LimeGreen] ($(D.south west) + (-0.1, -0.1)$) rectangle ($(F.north east) + (0.1, 0.1)$);
            \draw[draw=RoyalBlue] (a.south west) rectangle (b.north east);
            \draw[draw=LimeGreen] ($(a.south west) + (-0.1, -0.1)$) rectangle ($(c.north east) + (0.1, 0.1)$);
            \draw[draw=RoyalBlue] (d.south west) rectangle (e.north east);
            \draw[draw=LimeGreen] ($(d.south west) + (-0.1, -0.1)$) rectangle ($(f.north east) + (0.1, 0.1)$);}]
                 x^{++++} + x^{++--} + x^{+-+-} + x^{+--+} & \longmapsto & x_1^+ \\ 
                 |[alias = c]| x^{++++} + x^{++--} + x^{-+-+} + x^{-++-} & \longmapsto & |[alias = C]| x_2^+ \\
                 |[alias = b]| x^{++++} + x^{--++} + x^{+-+-} + x^{-++-} & \longmapsto & |[alias = B]| x_3^+ \\
                 |[alias = a]| x^{++++} + x^{--++} + x^{-+-+} + x^{+--+} & \longmapsto & |[alias = A]| x_4^+ \\
                 x^{----} + x^{--++} + x^{-+-+} + x^{-++-} & \longmapsto & x_1^- \\ 
                 |[alias = f]| x^{----} + x^{--++} + x^{+-+-} + x^{+--+} & \longmapsto & |[alias = F]| x_2^- \\
                 |[alias = e]| x^{----} + x^{++--} + x^{-+-+} + x^{+--+} & \longmapsto & |[alias = E]| x_3^- \\
                 |[alias = d]| x^{----} + x^{++--} + x^{+-+-} + x^{-++-} & \longmapsto & |[alias = D]| x_4^-
        \end{tikzcd}
    \end{center}
    The (set-theoretic) map to the fibre of the original tower is based on the following rule: for every sum, there is always exactly one point of the original tower which is common to all the summands. We map the sum to this point.
    This establishes a canonical set-theoretic bijection between the even component of the tetragonal construction of $\tilde C_1$ with the original tower. 

    For the odd component of $\tilde P_{\tilde C_1}$ we define the following map to $\tilde C_2$: 

    \begin{center}
        \begin{tikzcd}[row sep = 0.1cm, column sep = tiny, execute at end picture={
            \draw[draw=RoyalBlue] (B.south west) rectangle (C.north east);
            \draw[draw=LimeGreen] ($(A.south west) + (-0.1, -0.1)$) rectangle ($(C.north east) + (0.1, 0.1)$);
            \draw[draw=RoyalBlue] (E.south west) rectangle (F.north east);
            \draw[draw=LimeGreen] ($(D.south west) + (-0.1, -0.1)$) rectangle ($(F.north east) + (0.1, 0.1)$);
            \draw[draw=RoyalBlue] (b.south west) rectangle (c.north east);
            \draw[draw=LimeGreen] ($(a.south west) + (-0.1, -0.1)$) rectangle ($(c.north east) + (0.1, 0.1)$);
            \draw[draw=RoyalBlue] (e.south west) rectangle (f.north east);
            \draw[draw=LimeGreen] ($(d.south west) + (-0.1, -0.1)$) rectangle ($(f.north east) + (0.1, 0.1)$);}]
         |[alias = c]| x^{++++} + x^{++--} + x^{+-+-} + x^{-++-} & \longmapsto & |[alias = C]| x^{+++-} \\ 
         |[alias = b]| x^{++++} + x^{++--} + x^{-+-+} + x^{+--+} & \longmapsto & |[alias = B]| x^{++-+} \\
         |[alias = a]| x^{++++} + x^{--++} + x^{+-+-} + x^{+--+} & \longmapsto & |[alias = A]| x^{+-++} \\
         x^{++++} + x^{--++} + x^{-+-+} + x^{-++-} & \longmapsto & x^{-+++} \\
         |[alias = f]| x^{----} + x^{--++} + x^{-+-+} + x^{+--+} & \longmapsto & |[alias = F]| x^{---+} \\ 
         |[alias = e]| x^{----} + x^{--++} + x^{+-+-} + x^{-++-} & \longmapsto & |[alias = E]| x^{--+-} \\
         |[alias = d]| x^{----} + x^{++--} + x^{-+-+} + x^{-++-} & \longmapsto & |[alias = D]| x^{-+--} \\
         x^{----} + x^{++--} + x^{+-+-} + x^{+--+} & \longmapsto & x^{+---}
        \end{tikzcd}
    \end{center}
    The set-theoretic map of fibres is obtained via the following rule: any point of $\tilde G$ appears either three or one time in those sums. We take the sum of those that appear 3 times.
    
    In a similar way one can define a bijection between the odd component of $\tilde P_{\tilde C_2}$ and the original tower: the points in $\tilde C_2$ are
    \begin{center}
        \begin{tikzcd}[row sep = 0.3cm, column sep = tiny, execute at end picture={
            \draw[draw=RoyalBlue] (A.south west) rectangle (B.north east);
            \draw[draw=LimeGreen] ($(A.south west) + (-0.1, -0.1)$) rectangle ($(C.north east) + (0.1, 0.1)$);
            \draw[draw=RoyalBlue] (D.south west) rectangle (E.north east);
            \draw[draw=LimeGreen] ($(D.south west) + (-0.1, -0.1)$) rectangle ($(F.north east) + (0.1, 0.1)$);}]
                 |[alias = A]| x^{+++-} & |[alias = B]| x^{++-+} & |[alias = C]| x^{+-++} & x^{-+++}\\ 
                 |[alias = D]| x^{---+} & |[alias = E]| x^{--+-} & |[alias = F]| x^{-+--} & x^{+---}
        \end{tikzcd}
    \end{center}
    The two components of $\tilde P_{\tilde C_2}$ are mapped to $\tilde G$ and $\tilde C_1$ by
    \begin{center}
        \begin{tikzcd}[row sep = 0.1cm, column sep = tiny, execute at end picture={
            \draw[draw=RoyalBlue] (A.south west) rectangle (B.north east);
            \draw[draw=LimeGreen] ($(A.south west) + (-0.1, -0.1)$) rectangle ($(C.north east) + (0.1, 0.1)$);
            \draw[draw=RoyalBlue] (D.south west) rectangle (E.north east);
            \draw[draw=LimeGreen] ($(D.south west) + (-0.1, -0.1)$) rectangle ($(F.north east) + (0.1, 0.1)$);
            \draw[draw=RoyalBlue] (a.south west) rectangle (b.north east);
            \draw[draw=LimeGreen] ($(a.south west) + (-0.1, -0.1)$) rectangle ($(c.north east) + (0.1, 0.1)$);
            \draw[draw=RoyalBlue] (d.south west) rectangle (e.north east);
            \draw[draw=LimeGreen] ($(d.south west) + (-0.1, -0.1)$) rectangle ($(f.north east) + (0.1, 0.1)$);}]
                 x^{+++-} + x^{++-+} + x^{+-++} + x^{+---} & \longmapsto & x_1^+ \\ 
                 |[alias = c]| x^{+++-} + x^{++-+} + x^{-+--} + x^{-+++} & \longmapsto & |[alias = C]| x_2^+ \\
                 |[alias = b]| x^{+++-} + x^{--+-} + x^{+-++} + x^{-+++} & \longmapsto & |[alias = B]| x_3^+ \\
                 |[alias = a]| x^{---+} + x^{++-+} + x^{+-++} + x^{-+++} & \longmapsto & |[alias = A]| x_4^+ \\
                 x^{---+} + x^{--+-} + x^{-+--} + x^{-+++} & \longmapsto & x_1^- \\ 
                 |[alias = f]| x^{---+} + x^{--+-} + x^{+-++} + x^{+---} & \longmapsto & |[alias = F]| x_2^- \\
                 |[alias = e]| x^{---+} + x^{++-+} + x^{-+--} + x^{+---} & \longmapsto & |[alias = E]| x_3^- \\
                 |[alias = d]| x^{+++-} + x^{--+-} + x^{-+--} + x^{+---} & \longmapsto & |[alias = D]| x_4^-
        \end{tikzcd} 
    \end{center}
    and
    \begin{center}
        \begin{tikzcd}[row sep = 0.1cm, column sep = tiny, execute at end picture={
            \draw[draw=RoyalBlue] (A.south west) rectangle (B.north east);
            \draw[draw=LimeGreen] ($(A.south west) + (-0.1, -0.1)$) rectangle ($(C.north east) + (0.1, 0.1)$);
            \draw[draw=RoyalBlue] (D.south west) rectangle (E.north east);
            \draw[draw=LimeGreen] ($(D.south west) + (-0.1, -0.1)$) rectangle ($(F.north east) + (0.1, 0.1)$);
            \draw[draw=RoyalBlue] (a.south west) rectangle (b.north east);
            \draw[draw=LimeGreen] ($(a.south west) + (-0.1, -0.1)$) rectangle ($(c.north east) + (0.1, 0.1)$);
            \draw[draw=RoyalBlue] (d.south west) rectangle (e.north east);
            \draw[draw=LimeGreen] ($(d.south west) + (-0.1, -0.1)$) rectangle ($(f.north east) + (0.1, 0.1)$);}]
                 x^{+++-} + x^{++-+} + x^{+-++} + x^{-+++} & \longmapsto & x^{++++} \\ 
                 |[alias = c]| x^{+++-} + x^{++-+} + x^{-+--} + x^{+---} & \longmapsto & |[alias = C]| x^{++--} \\
                 |[alias = b]| x^{+++-} + x^{--+-} + x^{+-++} + x^{+---} & \longmapsto & |[alias = B]| x^{+-+-} \\
                 |[alias = a]| x^{---+} + x^{++-+} + x^{+-++} + x^{+---} & \longmapsto & |[alias = A]| x^{+--+} \\
                 x^{---+} + x^{--+-} + x^{-+--} + x^{+---} & \longmapsto & x^{----} \\ 
                 |[alias = f]| x^{---+} + x^{--+-} + x^{+-++} + x^{-+++} & \longmapsto & |[alias = F]| x^{--++} \\
                 |[alias = e]| x^{---+} + x^{++-+} + x^{-+--} + x^{-+++} & \longmapsto & |[alias = E]| x^{-+-+} \\
                 |[alias = d]| x^{+++-} + x^{--+-} + x^{-+--} + x^{-+++} & \longmapsto & |[alias = D]| x^{-++-}
        \end{tikzcd}
    \end{center}

    This completes the case of $x$ being of type $\romanNumeralCaps{3}$. If $x$ is of type $\romanNumeralCaps{2}$, then the necessary verifications can be obtained from the lists already given above: using \cref{notation}, in each case a number of the points become identified, indicated by the dark blue boxes, and should be treated as a single point with dilation factor 2. We see that the bijections above are compatible with these identifications. Finally, if $x$ is of type $\romanNumeralCaps{1}$ the verification is similar and indicated by the light green boxes representing a single point with dilation factor 3.

    \medskip 
    At this point we have established all the bijections involved in the triality fibrewise. Note also that those bijections do not depend on the choice of coordinates because the rules defining the bijections do not depend on the given coordinates. To see that these are in fact isomorphisms of graphs, we need to check that they are compatible with the root maps and the involution on half-edges.

    For the first one, given a vertex $v\in K$, one can choose a coordinate system such that for any half-edge $h$ in the tangent space of $v$ we have that $r(h_i^{\pm})=v_i^{\pm}$, i.e. the coordinate system is trivial around $v$. With this coordinate system, it is clear that the fibrewise isomorphism preserves the root map.

    For the compatibility with the involution one can choose a coordinate system which is invariant under the involution of half-edges. With that coordinate system, it is clear that the fibrewise isomorphism preserves the involution of half-edges.

    In summary, the fibrewise isomorphism preserves the graph structure, hence is a graph isomorphism.
\end{proof}

When a tower is orientable, we know that $\tilde P_{\tilde G}$ is not connected. However it might have more than two connected components, producing new towers where the top graph is not connected. The next proposition provides connectedness of the $\tilde C_i$ for a large set of examples.

\begin{proposition} \label{prop:Donagi_connected}
    Let $\widetilde{G}\to G\to K$ be a generic tower (in particular let $\tilde G \to G$ be free) with connected $\widetilde{G}$ such that $\tilde G \to K$ is a fibrewise quotient of the trivial octuple cover of $K$. Then the tower is orientable and the output curves $\tilde C_1$ and $\tilde C_2$ are connected.
\end{proposition}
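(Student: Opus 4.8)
The plan is to obtain orientability for free and then translate connectedness into a question about a finite monodromy group and its triality.

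Orientability is nothing but \cref{lem:orientable_quotient}, whose hypotheses coincide with ours. For connectedness, the starting remark is that, by the discussion following \cref{lem:orientable_quotient}, all three curves $\tilde G$, $\tilde C_1$ and $\tilde C_2$ are fiberwise quotients of the trivial octuple cover of $K$. Such a cover is assembled from eight globally labelled sheets which are glued to one another only over the dilated points of $K$; in particular it carries no monodromy around cycles of $K$, so it is connected if and only if these gluings connect all eight sheets (each sheet being a copy of the connected graph $K$). Fixing a coordinate system as in \cref{notation}, the eight sheets of $\tilde G$ over a point of type $\romanNumeralCaps{3}$ are the $x_i^{\pm}$, while those of $\tilde C_1$ (resp.\ $\tilde C_2$) are the even (resp.\ odd) divisors $x^{\epsilon_1\epsilon_2\epsilon_3\epsilon_4}$. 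Thus connectedness of each of the three curves is equivalent to transitivity of the respective gluing action.

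I would then package the gluings into a single subgroup $\Gamma$ of the Weyl group $W(D_4) = (\Z/2)^4_{\mathrm{even}} \rtimes S_4$ of even signed permutations of $\{1,2,3,4\}$, where $e_i$ denotes the sign flip at the index $i$. Reading the fibre pictures in the proof of \cref{prop:triality}, each dilated point contributes to $\Gamma$ an element acting compatibly on all three eight-element sets: a type $\romanNumeralCaps{2}$ point over $\{a,b\}$ contributes either the transposition $(ab)$ or the element $(ab)(e_a+e_b)$, according to whether the double cover identifies the two merging sheets with matching or opposite signs, and a type $\romanNumeralCaps{1}$ point over $\{a,b,c\}$ contributes the symmetric group on $\{a,b,c\}$ together with its sign twists. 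The three sets $T = \{x_i^{\pm}\}$, $V^+$ and $V^-$ are precisely the vector and the two half-spinor weight octets permuted by the triality of $W(D_4)$, and the statement to prove becomes: if $\Gamma$ is transitive on $T$, then it is transitive on $V^+$ and on $V^-$.

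The key observation is that $(\Z/2)^4_{\mathrm{even}}$ already acts simply transitively on each of $V^+$ and $V^-$, since the difference of two even (resp.\ odd) sign vectors is an even flip; so it suffices to show that transitivity of $\Gamma$ on $T$ forces $(\Z/2)^4_{\mathrm{even}} \subseteq \Gamma$. Projecting to $S_4$, transitivity on $T$ makes the image $\bar\Gamma$ transitive on $\{1,2,3,4\}$, and since $\bar\Gamma$ is generated by the transpositions coming from the type $\romanNumeralCaps{1}$ and $\romanNumeralCaps{2}$ points it must be the full $S_4$. Transitivity on $T$ also forces sign mixing: the stabiliser of a position must interchange the two sheets above it, which amounts to a negative cycle in the signed graph recording, for each $\{a,b\}$, whether the identification preserves or swaps the signs; a suitable product of the corresponding generators around this cycle then produces a pure sign element of weight two in $\Gamma \cap (\Z/2)^4$. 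Conjugating it by $\bar\Gamma = S_4$ yields every $e_i + e_j$, hence all of $(\Z/2)^4_{\mathrm{even}}$, and transitivity on $V^{\pm}$ follows.

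I expect this last step — promoting ``transitive on $T$'' to ``$(\Z/2)^4_{\mathrm{even}} \subseteq \Gamma$'' — to be the main obstacle. It is false for a general transitive subgroup of $W(D_4)$: triality permutes $T, V^+, V^-$ but does not preserve $\Gamma$, and the degenerate case $\Gamma \cap (\Z/2)^4 = \langle e_1+e_2+e_3+e_4 \rangle$ is transitive on $T$ yet leaves $\{x^{++++}, x^{----}\}$ as a component of each $\tilde C_i$. Ruling this case out genuinely uses that the generators of $\Gamma$ are of the three merging types above, all of sign weight at most two; the careful bookkeeping of the sign twists of the type $\romanNumeralCaps{1}$ blocks, and the verification that a negative cycle always produces a weight-two rather than a weight-four pure flip when $\tilde G$ is connected, is the delicate part of the argument.
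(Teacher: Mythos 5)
Your reduction coincides, up to notation, with the paper's own proof: there too the gluings over the dilated points are packaged into a subgroup of the even signed permutation group $WD_4$ of order $192$, generated fiberwise by bitranspositions; connectedness of $\tilde G$ is identified with transitivity on $N = \set{\pm 1, \dots, \pm 4}$, and connectedness of $\tilde C_1$ and $\tilde C_2$ with transitivity on the two orbits (even and odd sign vectors) of the induced action on quadruples, while orientability comes from \cref{lem:orientable_quotient} in both treatments. The difference lies in how the crux is closed. The paper verifies by a Magma enumeration that the only subgroup of $WD_4$ which is transitive on $N$ and generated by bitranspositions is $WD_4$ itself. You instead aim for the weaker (and sufficient) conclusion $(\Z/2)^4_{\mathrm{even}} \subseteq \Gamma$, but you leave the decisive step --- that an unbalanced generator configuration yields a \emph{weight-two} pure flip rather than only the weight-four central flip --- explicitly unproven, calling it ``the delicate part.'' As written this is a genuine gap: it is exactly what separates your desired conclusion from the degenerate case $\Gamma \cap (\Z/2)^4 = \langle e_1+e_2+e_3+e_4\rangle$ that you yourself exhibit as transitive on $T$ but not on $V^\pm$, and nothing in your text rules it out.

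The gap is fillable by hand, and your fear of a weight-four outcome does not materialize, for the following reason. For a \emph{free} double cover every fiberwise group $K_x$ is generated by signed transpositions $t^{\pm}_{ab}$ only (a weight-two flip $(a,-a)(b,-b)$ in some $K_x$ would force $x_a^+ = x_a^-$, i.e.\ dilation in the double cover). Conjugating $t^{\epsilon}_{ab}$ by $t^{\delta}_{bc}$ gives the signed transposition on $\{a,c\}$ with sign $\epsilon\delta$, so a shortest negative cycle in the signed generator graph can be shortened step by step to two opposite-signed transpositions on one common pair, whose product is precisely $e_a + e_b$; transitivity on $T$ guarantees such a negative cycle exists (otherwise, after switching, $\Gamma$ is a balanced copy of a subgroup of $S_4$ preserving the two sheets), and conjugating $e_a+e_b$ by $\bar\Gamma = S_4$ produces all $e_i + e_j$, hence $(\Z/2)^4_{\mathrm{even}} \subseteq \Gamma$ and in fact $\Gamma = WD_4$, recovering the Magma computation. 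Note also why your counterexample cannot occur here: the subgroup $S_4 \times \langle e_1+e_2+e_3+e_4 \rangle$ contains only the positive bitranspositions, so it is not generated by the allowed fiberwise generators. With this paragraph added your argument is complete and, unlike the paper's, computer-free; without it, the proposal stops exactly at the point the proof has to do its work.
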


Note that, whenever $K$ is a tree, the map $\tilde G \to K$ is automatically a quotient of the trivial octuple cover of $K$.

\begin{proof}    
     We define $N = \set{1, 2, 3, 4, -1, -2, -3, -4}$. We name the projection $p:\Tilde{G}\to K$ and we let $q :\coprod_{N} K \to \widetilde{G}$ be the (non-harmonic) fibrewise quotient map. We define the group $WD_4$ to be the subgroup of $S_N$ generated by 
     $$(1,2)(-1,-2), \quad (1,3)(-1,-3), \quad (1,4)(-1,-4), \quad \text{and} \quad(1,-1)(2,-2).$$ 
     A direct computation shows that $|WD_4|=192$. This group is the even signed permutation group and already appears in \cite[Remark 2.6]{donagiSecond}.
    
    Now let $x$ be a point of $K$, we associate to it the subgroup of $WD_4$ of all the permutations (in $WD_4$) that act as identity on $p^{-1}(x)$. In more rigorous terms, $WD_4$ acts on $\coprod_{N} K$ as a subgroup of $S_N$. We associate to $x$ the subgroup of $WD_4$ of all permutations $s$ such that the restriction $\Bar{q}:\coprod_{N} \set{x}\to \Tilde{G}$ satisfies $\Bar{q}\circ s = \Bar{q}$. We name this subgroup $K_x$. 

    The cover $\Gtilde$ is connected if and only if the subgroup generated by all $K_x$ acts transitively on $N$. To see this, let us remark that a path inside $\Gtilde$ can be lifted to a union of paths inside $\coprod_{N} K$, such that paths begin and end at points that are the same under the quotient map $q$. An explicit computation fibre type per fibre type shows that all $K_x$ are always generated by bitranspositions. A direct computation in Magma shows that the only subgroup of $WD_4$ that acts transitively on $N$ and is generated by bitranspositions is $WD_4$ itself. Therefore, the subgroup generated by the union of all $K_x$ is $WD_4$.

    Now we can define $M$ to be set of 4 element subsets of $N$ such that the map $X\to \set{1,2,3,4}, a \mapsto |a|$ is bijective for all $X \in M$. The action of $WD_4$ on $M$ splits in exactly two orbits: the subsets with an even number of negative numbers and the subsets with an odd number of negative numbers. The elements of $M$ are exactly the quadruplets of divisors defined in the tetragonal construction, and the action of $WD_4$ on $M$ describes the connectedness of $\Ptilde_{\Gtilde}$ in the same way as the action of $WD_4$ on $N$ describes the connectedness of $\Gtilde$.
\end{proof}

It can also happen that the input tower is connected while the output tower is not. The following lemma gives a restriction on the possibilities of tetragonal links between connected and disconnected towers.

\begin{lemma}\label{lem:connected_donagi}

    Let $\Gtilde\to G\to K$ be an orientable generic tower. If $\Gtilde$ is disconnected and the output of the tetragonal construction are two towers $\tilde C_i\to C_i\to K$ satisfying that $\tilde C_i$ is connected for at least one $i\in\set{1,2}$, then $G$ is disconnected into two connected components $G=G_1\coprod G_2$ where the induced map $G_i\to K$ is a double cover for $i=1,2$.
\end{lemma}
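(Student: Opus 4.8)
The plan is to reduce the statement to a finite computation inside the group $WD_4$, exactly as in the proof of \cref{prop:Donagi_connected}. After fixing a coordinate system (\cref{notation}), I would attach to the tower its monodromy subgroup $H\le WD_4$: this is generated by the local subgroups $K_x$ recording the identifications forced by dilated fibres, together with the holonomy of $\Gtilde\to K$ around the cycles of $K$. Orientability (\cref{def:orientable}) is precisely the statement that $H$ lands in the even signed permutation group $WD_4$ rather than the full hyperoctahedral group. Writing $N=\set{\pm 1,\pm 2,\pm 3,\pm 4}$ for the labels of the fibre of $\Gtilde$, writing $M^+$ and $M^-$ for the eight divisors $x^{\epsilon_1\epsilon_2\epsilon_3\epsilon_4}$ with an even, respectively odd, number of minus signs, and writing $\overline{H}\le S_4$ for the image of $H$ under $a\mapsto\lvert a\rvert$, the first step is to establish the dictionary: $\Gtilde$ is connected iff $H$ is transitive on $N$; $\tilde C_1$ (resp.\ $\tilde C_2$) is connected iff $H$ is transitive on $M^+$ (resp.\ $M^-$); and the connected components of $G$ are the $\overline{H}$-orbits on $\set{1,2,3,4}$, an orbit of size $k$ giving a component of $G\to K$ of degree $k$ (the dilation at types $\romanNumeralCaps{1}$ and $\romanNumeralCaps{2}$ exactly compensating the merging of labels).

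With this dictionary the hypotheses read: $H$ is intransitive on $N$ while, after possibly interchanging $\tilde C_1$ and $\tilde C_2$, it is transitive on $M^+$; and the conclusion becomes the assertion that $\overline{H}$ has exactly two orbits on $\set{1,2,3,4}$, both of size $2$. The heart of the proof is therefore the purely group-theoretic claim that every subgroup $H\le WD_4$ which is intransitive on $N$ but transitive on $M^+$ has $\overline{H}$ acting with orbit type $2+2$. I would prove this by classifying such $H$ up to conjugacy. A convenient organizing principle is the triality: the outer $S_3$-symmetry of $WD_4$ permutes the three eight-element sets $N$, $M^+$, $M^-$, and this is precisely the group-theoretic shadow of \cref{prop:triality}. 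Since only a bounded list of conjugacy classes of subgroups is involved, the claim is readily confirmed by the same kind of Magma computation already used in \cref{prop:Donagi_connected}.

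Granting the group-theoretic claim, the translation back is immediate: two $\overline{H}$-orbits of size $2$ give $G=G_1\sqcup G_2$ with each $G_i\to K$ of degree $2$, hence a double cover. Note that the orbit type $2+2$ automatically excludes fibres of type $\romanNumeralCaps{1}$, since such a fibre would force three of the labels $\set{1,2,3,4}$ into a single $\overline{H}$-orbit; thus only types $\romanNumeralCaps{2}$ and $\romanNumeralCaps{3}$ occur, and the degree-$2$ maps $G_i\to K$ are honest double covers, as required.

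The step I expect to be the main obstacle is twofold. First, the group-theoretic classification in the second paragraph must be carried out carefully, ideally by computer as the authors already do elsewhere. Second, and more delicate, is establishing the connectedness dictionary in the presence of dilation: one must check fibre type by fibre type (as in \cref{prop:triality}) that dilated fibres contribute only the bitransposition generators recorded by the $K_x$, and that the transport of the divisors $x^{\epsilon_1\epsilon_2\epsilon_3\epsilon_4}$ along $\tilde C_1$ is governed by the same subgroup $H$ acting on $M^+$. Once these bookkeeping facts are in place, the lemma follows from the finite computation.
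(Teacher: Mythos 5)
Your reduction to a transitivity statement in $WD_4$ is a genuinely different route from the paper's proof (which argues directly: if $G$ were connected, a sheet-counting function on $\Ptilde_{\Gtilde}$ shows both outputs are disconnected, and if $G$ splits off a degree-one copy of $K$, the construction decomposes into copies of the trigonal construction). But your proposal fails at exactly the step you flagged as the heart of the matter: the group-theoretic claim is \emph{false}. Let $H_0 \leq WD_4$ be the group of diagonal sign changes with an even number of flips, $H_0 = \langle (1,-1)(2,-2),\ (2,-2)(3,-3),\ (3,-3)(4,-4)\rangle$, of order $8$. Identifying $M^+$ with the even-weight vectors in $(\Z/2)^4$, the group $H_0$ acts by translation by even-weight vectors, hence \emph{simply transitively} on $M^+$ (and on $M^-$), while its orbits on $N$ are the pairs $\set{i,-i}$ and $\overline{H_0} \leq S_4$ is trivial, so the orbit type on $\set{1,2,3,4}$ is $1+1+1+1$, not $2+2$. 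No classification, by Magma or otherwise, can return the answer you need.

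Worse, this is not a defect of the method that a cleverer argument could avoid: $H_0$ is realized by an honest orientable generic tower, so carrying your strategy through actually \emph{refutes} the lemma. Take $K$ with two vertices and four parallel edges $e_0, e_a, e_b, e_c$, let $G = K \coprod K \coprod K \coprod K$ with all local degrees $1$ (every fiber of type $\romanNumeralCaps{3}$), and let $\Gtilde \to G$ be the free double cover twisted over $e_a$ on sheets $1,2$, over $e_b$ on sheets $2,3$, and over $e_c$ on sheets $3,4$. Every edge of $K$ carries an even number of twists, so the tower is orientable in the sense of \cref{def:orientable}; every sheet is twisted over some edge, so $\Gtilde$ has exactly four connected components; and the monodromy subgroup is precisely $H_0$, so both $\tilde C_1$ and $\tilde C_2$ are connected. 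All hypotheses of \cref{lem:connected_donagi} hold, yet $G$ consists of four degree-one components rather than two double covers. (Your dictionary between components and orbits is unproblematic here because this tower is free everywhere, so $\Gtilde \to K$ and $\Ptilde_{\Gtilde} \to K$ are genuine covers classified by monodromy; for towers with dilation the dictionary itself would need the fiberwise-quotient formalism of \cref{prop:Donagi_connected}, since transport through a dilated fiber is not a well-defined element of $WD_4$.) The same example pinpoints where the paper's own proof goes wrong: in its second case it asserts $\Gtilde = \tilde G' \coprod K \coprod K$, i.e.\ it silently assumes that the free double cover over a degree-one component of $G$ is trivial, which fails as soon as $K$ has cycles. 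The statement does become true if one additionally assumes that $\Gtilde \to K$ is a fiberwise quotient of the trivial octuple cover (for instance when $K$ is a tree), since then a degree-one component of $G$ can only carry the trivial double cover and the paper's case analysis goes through.
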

\begin{proof}
    If $\Gtilde$ is disconnected, then either $G$ is connected or it is not.

    If $G$ is connected, then $\Gtilde\to G$ is the trivial double cover and we can label the sheets as $\Gtilde=G^+ \coprod G^-$ with $G^+\cong G^-\cong G$. One can define a map $\text{deg}_+:\Gtilde\to \Z$ mapping $x$ to 1 if $x\in G^+$ and 0 elsewhere. This map can be extended to a map $\text{deg}_+:\Ptilde_{\Gtilde}\to \Z$. The latter is locally constant and has image $\set{0,1,2,3,4}$. The points mapping to 0, 2 and 4 come from the component $\tilde C_1$ hence this component is disconnected and the points mapping to 1 and 3 come from the component $\tilde C_2$ hence that component is also disconnected and we are done.

    In the case where $G$ itself is disconnected as well, we can either decompose $G$ into two connected components as in the theorem or we can decompose it as $G=G'\coprod K$ where the map $G'\to K$ is degree 3 (with $G'$ not necessarily connected). We then can decompose $\Gtilde = \tilde G'\coprod K\coprod K$. Then any point $D\in \Ptilde_{\Gtilde}$ decomposes as $D' + y$ where $y$ is one point in the two copies of $K$ and $D'\in \Z^{\Gtilde'}$. Then $\Ptilde_{\Gtilde}$ is made of two copies of the trigonal construction $\Ptilde_{\Gtilde'}$ that are exchanged via the involution, hence four copies of a degree 4 cover of $K$. This implies that both towers of $\Ptilde_{\Gtilde}$ are disconnected.

    Therefore, if $\Gtilde$ is disconnected and $\tilde C_i$ is connected for at least one $i\in\set{1,2}$ then $G$ is disconnected and cannot be decomposed as $G=G'\coprod K$ where $G'\to K$ is a degree 3 cover. Hence, $G$ can only be decomposed into two connected components $G=G_1\coprod G_2$ where $G_1\to K$ and $G_2\to K$ are both double covers.
\end{proof}

Lastly, let us show an example of a triality of towers where two towers are connected and one tower is disconnected.

\begin{example}

    \input{disconnectedExample}
\end{example}

\begin{definition} \label{def:good_tower}
    A tower $\Gtilde \to G \to K$ is \emph{good} if it is generic, $\tilde G$ is connected, the tower is orientable and the tetragonal output is two connected towers. In particular, in any good tower $\Gtilde \to G$ is free.
\end{definition}

By \cref{lem:orientable_quotient} and \cref{prop:Donagi_connected}, we know that a class of examples of good towers is the class of generic towers such that the map $\Gtilde\to K$ is a quotient of the trivial octuple cover and $\Gtilde$ is connected.

Note that, for each fibre of generic type, the tetragonal construction gives two copies of the same type. This does not mean that the tetragonal construction is trivial since the global structure of the tower may change. 
However, the tetragonal construction preserves some properties of the tower: if a tower is a connected quotient of the trivial octuple cover, then it is orientable and the output towers of the tetragonal construction are also connected and quotients of the trivial octuple cover.
This is an extension of \cite[Proposition~2.14]{roehrleNgonal} to a larger class of towers where $K$ is not necessarily a tree.

\begin{proposition} \label{prop_dimensions}
    Let $\tilde G \to G \to K$ be a good tower and $\tilde C_i \to C_i \to K$ be the two output towers of the tetragonal construction. Then the dimensions of the Prym varieties agree, i.e. 
    \[ g(\Gtilde) - g(G) = g(\tilde C_1) - g(C_1) = g(\tilde C_2) - g(C_2). \]
\end{proposition}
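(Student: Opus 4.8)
The plan is to reduce the three-fold equality to the single claim $g(G) = g(C_1) = g(C_2)$. Recall that the dimension of the Prym of a tower equals $g(\text{top}) - g(\text{bottom})$. In a good tower (\cref{def:good_tower}) the curves $\tilde G$, $\tilde C_1$, $\tilde C_2$ are connected, hence so are their quotients $G$, $C_1$, $C_2$; moreover all three double covers are free, for $\tilde G \to G$ by genericity and for $\tilde C_i \to C_i$ because the output towers are again generic (type preservation, established below). A connected free double cover $\tilde H \to H$ doubles both the vertex and the edge count, so from $g = |E| - |V| + 1$ one gets $g(\tilde H) = 2g(H) - 1$. Therefore each Prym dimension collapses to
\[ g(\tilde G) - g(G) = g(G) - 1, \qquad g(\tilde C_i) - g(C_i) = g(C_i) - 1 \quad (i = 1,2), \]
and it suffices to prove $g(G) = g(C_1) = g(C_2)$.

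Since $G$, $C_1$, $C_2$ are connected, I would compute $g = |E| - |V| + 1$ by counting vertices and edges fiberwise through the degree-$4$ harmonic maps to $K$. A harmonic morphism sends vertices to vertices and edges to edges, so $|V(G)| = \sum_{v \in V(K)} |f^{-1}(v)|$ and $|E(G)| = \sum_{e \in E(K)} |f^{-1}(e)|$, and the analogous formulas hold for $C_i \to K$. Consequently $g(G) = g(C_i)$ will follow as soon as the fiber of $C_i \to K$ over every point $x \in K$ has the same number of points as the fiber of $G \to K$.

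The heart of the matter is thus this fiberwise cardinality count, which is exactly the type preservation recorded after \cref{prop:triality}: a fiber of type $\romanNumeralCaps{1}$, $\romanNumeralCaps{2}$, or $\romanNumeralCaps{3}$ (\cref{fig:fiber_types}) is sent by the tetragonal construction to two fibers of the same type, carrying $2$, $3$, or $4$ points respectively. I would read this directly off the explicit fibers displayed in the proof of \cref{prop:triality}: passing from $\tilde C_i$ to $C_i = \tilde C_i / \iota$ groups the eight listed divisors into $\iota$-orbits, and the local degree function of \cref{def:Donagi_construction} — in which, all points of $G$ being free, only the binomial factors $\binom{d_f(y)}{a_{\tilde y^+}}$ survive — returns the profiles $(1,1,1,1)$, $(2,1,1)$, $(3,1)$, matching those of $G \to K$; the same computation shows that $\iota$ acts without fixed points on these fibers, confirming the freeness of $\tilde C_i \to C_i$ used above. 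The only genuine bookkeeping, and the main obstacle, is the dilated types $\romanNumeralCaps{1}$ and $\romanNumeralCaps{2}$, where the coincidences among coordinate strings forced by \cref{notation} must be checked to be compatible with the $\iota$-pairing; once this is verified, the fiber cardinalities over every point of $K$ agree for $G$, $C_1$, $C_2$, hence so do $|V|$ and $|E|$, giving $g(G) = g(C_1) = g(C_2)$ and the proposition.
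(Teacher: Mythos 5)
Your proof is correct and rests on the same key observation as the paper's: the tetragonal construction preserves the type of each fiber, hence the vertex and edge counts, and for connected graphs the formula $g = |E| - |V| + 1$ then forces the genera to agree. Your preliminary reduction via $g(\tilde H) = 2g(H) - 1$ for connected free double covers is valid but redundant --- fiberwise type preservation already gives $|V(\tilde G)| = |V(\tilde C_i)|$, $|E(\tilde G)| = |E(\tilde C_i)|$ and likewise for $G$ and $C_i$, which is how the paper concludes directly at both levels without invoking freeness.
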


\begin{proof}
    By definition, $\tilde G$, $G$, $\tilde C_i$, and $C_i$ are all connected.
    For a connected graph $H$, the genus is $g(H) = |E(H)| - |V(H)| + 1$. But restricted to any individual fibre, the tetragonal construction does not change its type. Therefore, $|E(\tilde G)| = |E(\tilde C_1)| = |E(\tilde C_2)|$ and $|E(G)| = |E(C_1)| = |E(C_2)|$ and similarly for the number of vertices. The claim follows immediately. 
\end{proof}

\section{The tropical Donagi theorem}
\label{sec-thm}

In this section we will prove the tropical Donagi theorem, \cref{thm:main}, and this section closely follows the ideas of \cite{izadiLange}. From now on we work with a \emph{good tower of metric graphs} $\tilde \Gamma \to \Gamma \to K$, meaning that the underlying tower of unmetrised graphs is good in the sense of \cref{def:good_tower}. 
The tetragonal construction of the metric tower is given by performing the tetragonal construction on the underlying unmetrised tower and then metrising the result in the unique way to make the morphisms harmonic maps of metric graphs. By the assumption on $\tilde \Gamma \to \Gamma \to K$ being a good tower, the result is two good metric towers which we denote $\tilde\Lambda_i \to \Lambda_i \to K$ with $i=1,2$. 

The fibrewise description of the Donagi construction from \cref{def:Donagi_construction} holds verbatim for the metric Donagi construction when \enquote{points} are taken to be points in the sense of a metric graph. In this setting, the free $\ZZ$-modules in \cref{def:Donagi_construction} may be replaced with $\Div(\tilde\Gamma)$. Recall that by definition of a good tower, all fibres in our setting are of one of the types from \cref{def:good_types}. Moreover, the double covers $\tilde \Gamma \to \Gamma$ and $\tilde \Lambda_i \to \Lambda_i$ are free and hence the Prym varieties in question are all naturally principally polarised and we may work with the description of the Prym by antisymmetric divisors.

Concerning \cref{thm:main}, \cref{prop:triality} already establishes the triality and by \cref{prop_dimensions} we know that the dimensions of the Prym varieties in question agree. The goal of this section is to prove that $\Prym(\tilde\Gamma / \Gamma) \cong \Prym(\tilde\Lambda_1 / \Lambda_1)$ in the sense of principally polarised tropical abelian varieties. The isomorphism between $\Prym(\tilde\Gamma / \Gamma)$ and $\Prym(\tilde\Lambda_2 / \Lambda_2)$ can be obtained similarly because formally relabelling $x_4^+$ into $x_4^-$ and vice versa allows one to pass from the odd component of the tetragonal construction to the even component and vice versa. Indeed, the relabelling of $x_4^\pm$ automatically relabels the point $x^{++++}$ into $x^{+++-}$ hence passes from an even number of $+$ to an odd number.
\medskip

We start the proof by introducing two maps. Let 
\begin{align*}
    S:\tilde\Lambda_1&\longrightarrow \Div^4(\Gam),\\ z =x_1^{\epsilon_1}+x_2^{\epsilon_2}+x_3^{\epsilon_3}+x_4^{\epsilon_4}&\longmapsto x_1^{\epsilon_1}+x_2^{\epsilon_2}+x_3^{\epsilon_3}+x_4^{\epsilon_4}
\end{align*} 
be the map which reinterprets a formal sum of four points as an actual divisor
and let 
\begin{align*}
    S^T : \Gam&\longrightarrow \mathrm{Div}^{4}(\tilde\Lambda_1), \\ 
    x&\longmapsto \sum_{\substack{z \in \tilde\Lambda_1 \\ z(x) > 0}} \frac{d_p(z) z(x)}{d_{f \circ \pi}(x)} z,
\end{align*}
where $\pi:\Gam\to\Gamma$, $f:\Gamma\to K$ and $p:\tilde\Lambda_1\to K$ are the covers.

The map $S^T$ can be spelt out in coordinates as follows. In a fibre of type $\romanNumeralCaps{3}$, we label the points of $\tilde \Gamma$ by $\{x_1^\pm, \ldots, x_4^\pm\}$ and obtain
\begin{equation*}
    S^T : x \longmapsto \sum x+x_2^{\epsilon_2}+x_3^{\epsilon_3}+x_4^{\epsilon_4} 
\end{equation*}
where the sum is taken over all triplets of signs $(\epsilon_2,\epsilon_3,\epsilon_4)$ such that $x+x_2^{\epsilon_2}+x_3^{\epsilon_3}+x_4^{\epsilon_4}$ lies in $\tilde\Lambda_1$. In a fibre of type $\romanNumeralCaps{2}$ or $\romanNumeralCaps{1}$ the same description holds true by using \cref{notation}, i.e. formally identifying $x_i = x_j$ and $x_i = x_j = x_k$, respectively. This map works the same way as $S$ does, but we compose it with the isomorphism between $\Gam$ and the tetragonal output of $\tilde\Lambda_1$.

Extending by linearity, $S$ and $S^T$ induce maps of divisors (hence group homomorphisms) $S:\Div^d(\tilde\Lambda_1)\to\Div^{4d}(\Gam)$ and $S^T:\Div^d(\Gam)\to\Div^{4d}(\tilde\Lambda_1)$ for any degree $d$. We restrict now to $d = 0$. 

\begin{lemma} \label{lem:function}
    Let $f : \tilde\Lambda_1 \to \RR$ be a continuous piecewise linear function, not necessarily rational and let 
    $$\tilde f:\Gam\longrightarrow\R, \qquad  x\longmapsto\sum_{\substack{z \in \tilde \Lambda_1 \\ z(x) > 0}} \frac{d(z)z(x)}{d(x)} f(z)$$ 
    be the pushforward of $f$ where $d(-)$ denotes dilation with respect to the structural map to $K$. Then $\tilde f$ is a continuous piecewise linear function.
\end{lemma}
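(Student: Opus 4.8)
The plan is to verify the two defining properties of a continuous piecewise linear function separately: piecewise linearity on the interior of each edge, and continuity at the vertices. Since each point $x\in\Gam$ has finite support in its fiber, $\tilde f(x)$ is a finite sum and is well defined. On the interior of an edge $\tilde e$ of $\Gam$ lying over an edge of $K$, the combinatorial data entering the formula is locally constant: the set of divisors $z\in\tilde\Lambda_1$ with $z(x)>0$, as well as the integers $d(z)$, $z(x)$ and $d(x)$, do not change. Each such $z$ traces out an edge of $\tilde\Lambda_1$ over the same base edge, and $f$ is piecewise linear there; parametrizing $\tilde e$ and these edges by their common image in $K$, the coefficients $\tfrac{d(z)z(x)}{d(x)}$ are constants (in fact nonnegative integers, since $S^T$ is valued in $\Div^4(\tilde\Lambda_1)$). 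Hence $\tilde f|_{\tilde e^\circ}$ is a finite $\R$-linear combination of piecewise linear functions, and is piecewise linear.

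The remaining point is continuity at a vertex $\tilde v\in V(\Gam)$ over $v\in V(K)$. I would fix a half-edge $h\in T_{\tilde v}\Gam$ and let $x\to\tilde v$ along the corresponding edge. Every $z$ with $z(x)>0$ is supported on edges of $\tilde\Lambda_1$ lying over the image of $h$ in $K$, and as $x\to\tilde v$ its support tends to the vertex $z_0:=r(z)$; by continuity of $f$ we get $f(z)\to f(z_0)$, while $d(z)$, $z(x)=z(h)$ and $d(x)=d(h)$ remain constant. Grouping the terms by $z_0$, the limit of $\tilde f$ along $h$ is $\sum_{z_0}\bigl(\tfrac{1}{d(h)}\sum_{z:\,r(z)=z_0,\,z(h)>0}d(z)z(h)\bigr)f(z_0)$. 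Comparing with $\tilde f(\tilde v)=\sum_{z_0}\tfrac{d(z_0)z_0(\tilde v)}{d(\tilde v)}f(z_0)$, continuity (for all continuous $f$) is equivalent to the local identity
\[
\frac{1}{d(h)}\sum_{\substack{z\in T_{z_0}\tilde\Lambda_1\\ z(h)>0}}d(z)\,z(h)\;=\;\frac{d(z_0)\,z_0(\tilde v)}{d(\tilde v)}
\]
for every $z_0\in(\tilde\Lambda_1)_v$ with $z_0(\tilde v)>0$ and every half-edge $h$ at $\tilde v$; note that the right-hand side does not depend on $h$, so the content is that the left-hand side is independent of the direction of approach.

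Conceptually, this identity is the assertion that the incidence correspondence $x\leftrightarrow\{z:z(x)>0\}$, weighted by dilations, is harmonic: forming the incidence graph $W$ on pairs $(h,z)$ with $z(h)>0$, with projections $a:W\to\Gam$ and $b:W\to\tilde\Lambda_1$, one has $\tilde f=a_*(b^*f)$, and the identity is exactly the harmonicity of $a$ (harmonicity of $b$ and of $W\to K$ being automatic from the root-compatibility $r(\sum a_h h)=\sum a_h r(h)$ of \cref{def:Donagi_construction} together with harmonicity of $p:\tilde\Lambda_1\to K$). Granting it, $b^*f=f\circ b$ is continuous piecewise linear, and the statement follows from the general principle that a local-degree-weighted pushforward of a continuous piecewise linear function along a harmonic morphism is again continuous piecewise linear.

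The main obstacle is thus the identity itself, which I would establish fiberwise using \cref{notation}. The mechanism is that a point $y$ of $\Gamma$ over $v$ of dilation $m$ either persists over an adjacent edge as a single dilation-$m$ edge or splits into edges whose local degrees sum to $m$, and the binomial factors $\binom{d_f(y)}{a_{\tilde y^+}}$ in the tetragonal local degree $d(z)$ record precisely how the divisors $z$ over the edge recombine into $z_0$ over $v$. A direct computation in each point type of \cref{def:good_types} — type III $(1,1,1,1)$, type II $(2,1,1)$ and type I $(3,1)$, and for the latter two in each admissible adjacent edge type — confirms the identity. I expect the delicate part to be the bookkeeping forced by the coincidences $x_i=x_j$ (and $x_i=x_j=x_k$) in the dilated fibers dictated by \cref{notation}; everything else is formal.
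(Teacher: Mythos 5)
Your proposal is correct, and its computational core coincides with the paper's proof, but the packaging is genuinely different. The paper argues directly: it notes that continuity is only at stake when a point moves into a vertex where the dilation profile changes, and then, for the three transitions (type $\romanNumeralCaps{3}$ to $\romanNumeralCaps{2}$, $\romanNumeralCaps{3}$ to $\romanNumeralCaps{1}$, $\romanNumeralCaps{2}$ to $\romanNumeralCaps{1}$), writes out $\tilde f$ at a moving point and at the limit vertex and matches the two sums using continuity of $f$; those three displayed computations \emph{are} the proof. You instead reduce continuity at an arbitrary vertex $\tilde v$, along an arbitrary half-edge $h$, to the termwise identity $\frac{1}{d(h)}\sum_{z \in T_{z_0}\tilde\Lambda_1,\, z(h)>0} d(z)\,z(h) = d(z_0)\,z_0(\tilde v)/d(\tilde v)$, interpreted as harmonicity of the weighted incidence correspondence. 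This is finer-grained than the paper's comparison of whole sums (though equivalent: requiring the limits to match for all continuous $f$ forces termwise equality, and the grouping is legitimate since $z(h)>0$ implies $r(z)(\tilde v)\geq z(h)>0$), it makes direction-independence of the limit explicit, and it treats profile-preserving vertices uniformly rather than discarding them as non-critical. What the paper's route buys is that the verification is actually executed; what yours buys is a uniform conceptual statement (pushforward along a harmonic morphism of the incidence graph) that would also localize any future generalization. The one caveat is that your final step --- \enquote{a direct computation in each point type confirms the identity} --- is asserted rather than carried out, and that finite check, with the bookkeeping $x_i = x_j$ and $x_i = x_j = x_k$ you correctly flag, is precisely the substance of the paper's Cases 1--3. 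The identity itself is true as stated: for instance, at a type $\romanNumeralCaps{2}$ vertex adjacent to a type $\romanNumeralCaps{3}$ edge, taking $z_0 = y^{+-+-}$ (local degree $2$), $\tilde v = y_3^+$ (dilation $2$), and $h$ over $x_3^+$, the only contributing edge is $x^{+-+-}$ with $d(z)z(h) = 1$, giving $1 = 2\cdot 1/2$ on both sides, with the binomial factors in $d_{\tilde p}$ accounting for the recombination exactly as your mechanism predicts. So I would accept your argument as a correct proof modulo routine computations that the paper performs explicitly.
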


\begin{proof}
    \input{proofOfContinuity}  
\end{proof}

\begin{lemma} \label{lem:slopes}
    In the setting of \cref{lem:function}, the derivative of $\tilde f$ at a given point $x$ is 
    \[ \tilde f'(x) = \sum_{\substack{z \in \tilde\Lambda_1 \\ z(x) > 0}} z(x) \cdot f'(z).  \]
\end{lemma}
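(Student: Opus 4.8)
The plan is to reduce to the interior of a single edge and then differentiate the defining expression of $\tilde f$ term by term via the chain rule, the only real content being the bookkeeping of the arc-length reparametrizations forced by the two harmonic maps to $K$. Fix a point $x$ in the interior of an edge $\te$ of $\Gam$ together with a tangent direction at $x$. By \cref{lem:function} the function $\tilde f$ is piecewise linear, so its slope $\tilde f'(x)$ is well defined, and on the interior of $\te$ both the index set $\{z \in \tilde\Lambda_1 : z(x) > 0\}$ and each coefficient $z(x)$ are constant, since neither the fiber type of $f \circ \pi$ nor the combinatorial labelling of the tetragonal divisors changes along an edge. Hence I may differentiate the finite sum $\tilde f = \sum_{z} \tfrac{d(z)\, z(x)}{d(x)} f(z)$ summand by summand.

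First I would set up compatible arc-length parametrizations. Let $t$ denote arc length on the edge $e_K = (f\circ\pi)(\te)$ of $K$, let $s$ denote arc length on $\te$, and for each $z$ occurring in the sum let $u$ denote arc length on the edge of $\tilde\Lambda_1$ carrying $z$, with all orientations chosen compatibly with the fixed direction at $x$. By the defining relation of a harmonic morphism of tropical curves, the dilations with respect to the maps to $K$ give $dt = d(x)\,ds$ along $\te$ and $dt = d(z)\,du$ along the edge of $z$. Every $z$ appearing in the sum satisfies $p(z) = (f\circ\pi)(x)$ (this is forced by $z(x) > 0$, as $x$ then occurs in the divisor $z$), so the moving point $z$ stays over the same value of $t$ as $x$, whence $\tfrac{du}{ds} = \tfrac{dt/ds}{dt/du} = \tfrac{d(x)}{d(z)}$.

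The computation is then immediate: applying the chain rule to each summand yields $\tfrac{d}{ds} f(z) = f'(z)\,\tfrac{du}{ds} = f'(z)\,\tfrac{d(x)}{d(z)}$, so that
\[ \tilde f'(x) = \sum_{z} \frac{d(z)\, z(x)}{d(x)} \cdot f'(z) \cdot \frac{d(x)}{d(z)} = \sum_{z} z(x)\, f'(z), \]
the factors $d(z)/d(x)$ and $d(x)/d(z)$ cancelling exactly. I do not expect a genuine obstacle here: the entire point is that the normalizing factor $d(z)/d(x)$ built into the definition of $\tilde f$ is precisely the reciprocal of the reparametrization speed $du/ds$, so the two cancel and only the multiplicity $z(x)$ survives. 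The only steps requiring care are verifying that $z(x)$ and the range of summation are locally constant (legitimizing term-by-term differentiation) and fixing orientations consistently so that all slopes are measured in matching directions; the value at a vertex is then recovered by applying the edge computation to each incident tangent half-edge separately.
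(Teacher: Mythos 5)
Your proposal is correct and takes essentially the same approach as the paper: both reduce to the interior of an edge, where the index set $\{z : z(x) > 0\}$ and the multiplicities $z(x)$ are constant, and both use the harmonic length relation over $K$ (your reparametrization speed $du/ds = d(x)/d(z)$ encodes exactly the paper's identities $\ell(\te)\,d(\te) = \ell(e) = \ell(u)\,d(u)$) so that the normalizing factor $d(z)/d(x)$ in the definition of $\tilde f$ cancels and only $z(x)\,f'(z)$ survives. The sole difference is cosmetic: the paper computes the slope as the finite difference $\big(\tilde f(\tilde w) - \tilde f(\tilde v)\big)/\ell(\te)$ across the whole edge, whereas you differentiate term by term via the chain rule.
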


\begin{proof}
    To compute the derivative of $\tilde f$, let $\tilde e = (\tilde v, \tilde w)$ be an edge of $\tilde \Gamma$ lying over the edge $e \in E(K)$. Since the slope of $\tilde f$ only needs to be checked on interior points of edges of $\tilde \Gamma$ and away from the support of $\div(f)$, we may assume without loss of generality that the combinatorial type of the fibre stays the same across $e$ and the slopes of $f$ and $\tilde f$ are constant over $e$. 
	Returning to the combinatorial description of the tetragonal construction, we define the following set of edges of $\tilde\Lambda_1$ over $e$
	\[\mathcal{E} = \big\{ u \in E(\tilde\Lambda_1) \mathrel{\big|} [S(u)](\tilde e) > 0 \big\}, \]
    where $[S(u)](\tilde e)$ means $[S(z)](x) = z(x)$ for any points $z\in u$ and $x\in \tilde e$ which lie above the same point of $K$.
    We fix an orientation on all the preimages of $e$ such that they map to the same orientation of $e$.
	With the assumption that dilation factors do not change over $e$ we compute:
	\begin{align*}
		\text{$\tilde f'$ on } \tilde e &= \frac{1}{\ell(\tilde e)} \big( \tilde f(\tilde w) - \tilde f(\tilde v) \big) \\
	   &= \frac{1}{\ell(\tilde e)} \sum_{u = (y, z) \in \mathcal{E}} \frac{d(z)z(\tilde w)}{d(\tilde w)} f(z)-\frac{d(y)y(\tilde v)}{d(\tilde v)} f(y)\\
        &= \frac{1}{\ell(\tilde e)d(\tilde e)} \sum_{u = (y, z) \in \mathcal{E}} {d(u) [S(u)](\tilde e)} \big( f(z) - f(y) \big) \\
        &= \frac{d(\tilde e)}{\ell(e)} \sum_{u = (y, z) \in \mathcal{E}} \frac{d(u) [S(u)](\tilde e)}{d(\tilde e)} \big( f(z) - f(y) \big) \\
		&= \sum_{u = (y, z) \in \mathcal{E}} [S(u)](\tilde e) \underbrace{\frac{d(u)}{\ell(e)} \big( f(z) - f(y) \big)}_{\text{slope of $f$ on } u}.
	\end{align*}
\end{proof}

\begin{lemma} \label{prop:pushforward_div}
	Let $D \in \Div^0(\tilde\Lambda_1)$ and write it as $D = \div(f)$ with $f : \tilde\Lambda_1 \to \RR$ a continuous piecewise linear function, not necessarily rational. Let $\tilde f : \tilde \Gamma \to \RR$ be the pushforward of $f$ defined in \cref{lem:function}.
	With this definition we have $S(D) = \div(\tilde f)$.
\end{lemma}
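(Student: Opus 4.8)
The plan is to establish the identity $S(D) = \div(\tilde f)$ by comparing coefficients at each point $x \in \tilde\Gamma$, that is, to show $\div(\tilde f)(x) = [S(D)](x)$ for all $x$. First I would unwind the right-hand side. Since $S$ is a group homomorphism on divisors and $S(z)(x) = z(x)$ for a point $z \in \tilde\Lambda_1$ (viewed as the divisor $x_1^{\epsilon_1}+\dots+x_4^{\epsilon_4}$), linearity together with $D = \div(f)$ gives
\[ [S(D)](x) = \sum_{\substack{z \in \tilde\Lambda_1 \\ z(x) > 0}} z(x)\, \div(f)(z) = \sum_{\substack{z \in \tilde\Lambda_1 \\ z(x) > 0}} z(x) \sum_{h_z \in T_z\tilde\Lambda_1} f'(h_z), \]
where $\div(f)(z)$ has been expanded as the sum of the outgoing slopes of $f$ at $z$. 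Thus the target is a double sum indexed by pairs $(z, h_z)$ with $x$ in the support of $z$, with each slope $f'(h_z)$ weighted by the multiplicity $z(x)$.

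Next I would expand the left-hand side using the two preceding lemmas. By \cref{lem:function} the function $\tilde f$ is continuous and piecewise linear, so $\div(\tilde f)(x) = \sum_{h \in T_x\tilde\Gamma} \tilde f'(h)$ is well defined. Applying \cref{lem:slopes} on the edge $\tilde e_h$ through each half-edge $h$ and passing to the limit at $x$ rewrites the outgoing slope as $\tilde f'(h) = \sum_{u} [S(u)](\tilde e_h)\, f'(h_u)$, where $u$ runs over the edges of $\tilde\Lambda_1$ lying over $f(\tilde e_h)$ whose image $S(u)$ contains the edge $\tilde e_h$, and $h_u$ is the half-edge of $u$ at its endpoint over $(f\circ\pi)(x)$ oriented compatibly with $h$. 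Since the edges $u$ incident to a fixed vertex $z$ of $\tilde\Lambda_1$ over $(f\circ\pi)(x)$ biject with the half-edges $h_z \in T_z\tilde\Lambda_1$, this presents $\div(\tilde f)(x)$ as a double sum indexed by pairs $(h, h_z)$.

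The heart of the argument is a Fubini-type reorganization matching the two double sums. Fixing a vertex $z$ with $x \in \operatorname{supp}(z)$ and a half-edge $h_z \in T_z\tilde\Lambda_1$ lying on an edge $u$, I would argue that $f'(h_z)$ contributes to $\tilde f'(h)$ exactly for those $h \in T_x\tilde\Gamma$ whose edge appears in the divisor of edges $S(u)$, and that the total weight is $\sum_{h} [S(u)](\tilde e_h) = z(x)$. Geometrically this holds because, as a point moves from $z$ along $u$, the four points constituting $S$ spread out over $\tilde\Gamma$ and exactly $z(x)$ of them leave the vertex $x$ along half-edges, counted with the correct multiplicity by $S(u)$. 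Granting this, the term $f'(h_z)$ occurs with total weight $z(x)$ in $\sum_h \tilde f'(h)$, so the two double sums agree term by term, yielding $\div(\tilde f)(x) = \sum_{z(x)>0} z(x) \sum_{h_z \in T_z\tilde\Lambda_1} f'(h_z) = [S(D)](x)$; note that this also confirms $\div(\tilde f)$ is integral, as it must equal the integral divisor $S(D)$.

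The main obstacle I anticipate is establishing this combinatorial correspondence uniformly across the three fiber types. At a vertex where the dilation profile degenerates (type $\romanNumeralCaps{3}$ to $\romanNumeralCaps{2}$ or $\romanNumeralCaps{1}$, as in the three cases of \cref{lem:function}), several of the four points of $z$ coincide, and the bookkeeping of the multiplicities $z(x)$ against the dilation factors must be checked carefully using the explicit fiberwise description of the tetragonal construction and the conventions of \cref{notation}. As already happened in the proofs of \cref{lem:function} and \cref{lem:slopes}, I expect the dilation factors to cancel and leave precisely the weight $z(x)$, but verifying that cancellation is exactly the delicate point.
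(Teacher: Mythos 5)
Your proposal is correct and follows essentially the same route as the paper's proof: a coefficientwise comparison at each $x \in \tilde\Gamma$, expanding $\div(\tilde f)(x)$ via \cref{lem:slopes} and swapping the double sum using precisely the identity $\sum_{\tilde e \in r^{-1}(x)} [S(u)](\tilde e) = z(x)$ for each edge $u$ incident to $z$. The dilation-profile bookkeeping you flag as the delicate point is in fact already absorbed into \cref{lem:function} and \cref{lem:slopes}, so the paper's version of this argument requires no further case analysis by fiber type.
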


Let us remind that two piecewise linear functions give rise to the same divisor if and only if their difference is a constant function.

\begin{proof}
    For $x\in\tilde\Gamma$, we choose an orientation such that all edges incident to $x$ are going away from $x$ and we do the same for all other points of $\tilde\Gamma$ in the same fibre over $K$. We obtain: 
    \begin{align*}
        \div(\tilde{f})(x)=\sum_{\tilde e\in r^{-1}(x)}\tilde f'(\tilde e)&=\sum_{\tilde e\in r^{-1}(x)}\sum_{\substack{e\in E(\tilde\Lambda_1)\\ [S(e)](\tilde e)>0}}[S(e)](\tilde e)f'(e) \\ &=\sum_{\substack{z\in \tilde\Lambda_1\\z(x)>0}}z(x)\sum_{e\in r^{-1}(z)}f'(e)=\sum_{\substack{z\in \tilde\Lambda_1\\ z(x)>0}}z(x) D(z)=[S(D)](x) .  
    \end{align*}
\end{proof}

\begin{proposition}\label{prop:S_jacobian}
	The maps $S$ and $S^T$ induce group homomorphisms of Jacobians.
\end{proposition}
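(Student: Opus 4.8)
The plan is to exploit the Abel--Jacobi identification $\Jac \cong \Pic^0 = \Div^0/\Prin$ and thereby reduce the proposition to the single claim that $S$ and $S^T$ carry principal divisors to principal divisors. Both maps are already $\ZZ$-linear homomorphisms $S : \Div^0(\tilde\Lambda_1) \to \Div^0(\tilde\Gamma)$ and $S^T : \Div^0(\tilde\Gamma) \to \Div^0(\tilde\Lambda_1)$ (note that $4 \cdot 0 = 0$, so the degree-$0$ condition is preserved), so the only thing left to check is that each descends to the quotient by principal divisors, i.e. that $S(\Prin(\tilde\Lambda_1)) \subseteq \Prin(\tilde\Gamma)$ and $S^T(\Prin(\tilde\Gamma)) \subseteq \Prin(\tilde\Lambda_1)$.

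For $S$ this follows directly from the preceding lemmas. Given a principal divisor $D \in \Prin(\tilde\Lambda_1)$, I would write $D = \div(f)$ with $f : \tilde\Lambda_1 \to \RR$ a \emph{rational} piecewise linear function, so that every slope $f'(z)$ is an integer. By \cref{prop:pushforward_div} we have $S(D) = \div(\tilde f)$, where $\tilde f$ is the pushforward of \cref{lem:function}, and by \cref{lem:slopes} the slope of $\tilde f$ at a point $x$ equals $\sum_{z(x) > 0} z(x) f'(z)$. Since both the coefficients $z(x)$ and the slopes $f'(z)$ are integers, $\tilde f$ is again rational, and hence $S(D) = \div(\tilde f)$ is principal. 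Thus $S$ descends to a group homomorphism $\Jac(\tilde\Lambda_1) \to \Jac(\tilde\Gamma)$.

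For $S^T$ I would invoke the triality of \cref{prop:triality}: applying the tetragonal construction to $\tilde\Lambda_1 \to \Lambda_1 \to K$ returns $\tilde\Gamma \to \Gamma \to K$ (together with $\tilde\Lambda_2 \to \Lambda_2 \to K$), and by the very definition recalled before the lemmas, $S^T$ is precisely the map ``$S$'' associated to this reversed tower, composed with the triality isomorphism identifying $\tilde\Gamma$ with the relevant tetragonal output of $\tilde\Lambda_1$. Consequently \cref{lem:function}, \cref{lem:slopes}, and \cref{prop:pushforward_div} apply verbatim with the roles of $\tilde\Gamma$ and $\tilde\Lambda_1$ exchanged, and the identical slope-integrality argument shows $S^T(\Prin(\tilde\Gamma)) \subseteq \Prin(\tilde\Lambda_1)$, yielding a group homomorphism $\Jac(\tilde\Gamma) \to \Jac(\tilde\Lambda_1)$.

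The only genuinely delicate point, and the step I would spell out most carefully, is this reduction for $S^T$: one must confirm that the triality isomorphism really identifies $S^T$ with the $S$-map of the reversed tower, so that the three auxiliary lemmas transfer without having to re-derive the pushforward-of-slopes computation from scratch. Everything else is a formal consequence of the $\ZZ$-linearity of $S$ and $S^T$ together with the integrality of the slope formula in \cref{lem:slopes}.
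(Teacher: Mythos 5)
Your proposal is correct, and its first half coincides with the paper's proof: both reduce the statement to showing that principal divisors are sent to principal divisors, both dispose of $S$ by combining \cref{prop:pushforward_div} with the integrality of the slope formula in \cref{lem:slopes}, and both note that linearity is built into the definitions. Where you genuinely diverge is in the treatment of $S^T$. The paper argues directly: it exhibits the explicit candidate function $\tilde g : \tilde\Lambda_1 \to \RR$, $z \mapsto \sum_{x : z(x) > 0} z(x) \cdot g(x)$ (note that here the dilation weights appearing in the definition of $S^T$ cancel, which is a small payoff of the direct route -- one sees the pushforward function concretely), and declares the verifications analogous to Lemmas~\ref{lem:function}, \ref{lem:slopes}, and~\ref{prop:pushforward_div}. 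You instead invoke the triality of \cref{prop:triality} to identify $S^T$ with the $S$-map of the reversed tower $\tilde\Lambda_1 \to \Lambda_1 \to K$ and apply the three lemmas verbatim to that tower. This is legitimate, and the identification you flag as delicate is in fact asserted in the paper immediately after the definition of $S^T$ (``this map works the same way as $S$ does, but we compose it with the isomorphism between $\Gam$ and the tetragonal output of $\tilde\Lambda_1$''); it can be checked against the fiberwise bijections in the proof of \cref{prop:triality} together with the computations in \cref{FormulaOneS}, including the dilated fibers, where the weights $d_p(z) z(x) / d_{f\circ\pi}(x)$ reproduce exactly the divisors arising in the reversed construction. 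Two points your route requires but leaves implicit: the reversed tower must itself be a good tower so that the lemmas' standing hypotheses apply -- this follows from the triality and \cref{def:good_tower}, since its tetragonal outputs are $\tilde\Gamma$ and $\tilde\Lambda_2$, both connected -- and the triality isomorphism must respect edge lengths, which holds because it commutes with the harmonic maps to $K$ and the metrics are uniquely determined by harmonicity. With those remarks added, your argument buys the avoidance of re-deriving the pushforward computations at the cost of invoking the heavier combinatorial triality, whereas the paper's version is self-contained at the level of divisors but asks the reader to repeat the slope analysis with the roles of $\tilde\Gamma$ and $\tilde\Lambda_1$ exchanged.
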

\begin{proof}
	One needs to check that $S$ and $S^T$ send principal divisors to principal divisors. 
	To this end let $D$ be a principal divisor on $\tilde\Lambda_1$ and write $D=\div(f)$ for a rational function $f:\tilde\Lambda_1\to \R $. By \cref{prop:pushforward_div} we know that $S(D) = \div(\tilde f)$ for $\tilde f$ as in \cref{lem:function} and it remains to check that $\tilde f$ has integer slopes. This follows immediately from \cref{lem:slopes}. 
    Similarly, for $F$ a principal divisor on $\tilde\Gamma$ we write $F = \div (g)$ for a rational function $g : \tilde\Gamma \to \RR$. Then we claim that $S^T(F) = \div(\tilde g)$, where $\tilde g : \tilde\Lambda_1 \to \RR$ is the rational function
    \[ z \longmapsto \sum_{x : z(x) > 0} z(x) \cdot g(x). \]
    The necessary verifications are analogous to the Lemmas~\ref{lem:function}, \ref{lem:slopes}, and~\ref{prop:pushforward_div}.
	Finally, linearity of $S$ and $S^T$ is clear from the definition of the maps by linear extension.
\end{proof}

Recall from Notation \ref{notation} the short-hand notation $x^{\pm\pm\pm\pm} = x_1^\pm + x_2^\pm + x_3^\pm + x_4^\pm$ for a point in $\tilde\Lambda_1$. 

\begin{lemma}\label{FormulaOneS}
    Let $g : \tilde \Gamma \to K$ and $p : \tilde\Lambda_1 \to K$ be the structural maps. Then we have
    \begin{align*}
        S^T \circ S(z) &= 2\big( p^\ast(p(z)) + z - \iota z \big) & &\text{for all } z\in \tilde\Lambda_1 \text{ and} \\
        S \circ S^T(x) &= 2 \big( g^\ast(g(x)) + x - \iota x \big) & &\text{for all } x \in \tilde \Gamma.
    \end{align*}
\end{lemma}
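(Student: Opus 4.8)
The plan is to verify both identities fiberwise over $K$. Since $S$ and $S^T$ relate only points lying over a common point of $K$, it suffices to fix $x_0 \in K$ and to check that the two sides agree as divisors supported on the fiber of $\tilde\Lambda_1$ (resp.\ $\tilde\Gamma$) over $x_0$.

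For the first identity I would unwind the composition directly from the definitions. Writing $S(z) = \sum_x z(x)\,x$ as a divisor on $\tilde\Gamma$ and inserting this into $S^T$, the coefficient of a point $w$ of $\tilde\Lambda_1$ over $x_0$ becomes
\[ \big[S^T \circ S(z)\big](w) = d_p(w) \sum_{x \in g^{-1}(x_0)} \frac{z(x)\,w(x)}{d_g(x)} =: d_p(w)\,\langle z, w\rangle, \]
a weighted pairing of the divisors $z$ and $w$ on $\tilde\Gamma$. On the other hand the target divisor $2\big(p^\ast(p(z)) + z - \iota z\big)$ has coefficient $2 d_p(w)$ at a generic $w$, coefficient $2 d_p(z) + 2$ at $w = z$, and $2 d_p(z) - 2$ at $w = \iota z$; here I use that $\iota$ commutes with $p$, so $d_p(\iota z) = d_p(z)$, and that freeness of $\tilde\Lambda_1 \to \Lambda_1$ forces $z \neq \iota z$. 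Comparing coefficients, the first identity reduces to the three pairing values
\[ \langle z, w\rangle = 2 \ (w \neq z, \iota z), \qquad d_p(z)\,\langle z, z\rangle = 2 d_p(z) + 2, \qquad d_p(z)\,\langle z, \iota z\rangle = 2 d_p(z) - 2. \]

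The heart of the proof is to establish these three values, type by type. In a fiber of type $\romanNumeralCaps{3}$ all local degrees equal $1$, each point $w$ is recorded by its sign vector in $\{\pm\}^4$, and $\langle z, w\rangle$ simply counts the coordinates in which the sign vectors of $z$ and $w$ agree; since both vectors have an even number of minus signs they disagree in an even number of coordinates, so the pairing equals $4$ for $w = z$, $0$ for $w = \iota z$, and $2$ otherwise, which is exactly what is required. For fibers of type $\romanNumeralCaps{2}$ and $\romanNumeralCaps{1}$ I would invoke \cref{notation} and formally identify $x_i = x_j$ (resp.\ $x_i = x_j = x_k$), which merges some of the points $w$ and promotes the coincident point of $\tilde\Gamma$ to dilation factor $d_g = 2$ (resp.\ $3$); tracking the resulting multiplicities $z(x) > 1$ together with the local degrees $d_p(w)$ and $d_g(x)$ through the pairing, one checks that the same three values persist. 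The second identity is then the symmetric computation: the coefficient of $y \in \tilde\Gamma$ in $S \circ S^T(x)$ equals $\tfrac{1}{d_g(x)}\sum_{w} d_p(w)\,w(x)\,w(y)$, and the analogous three values — now obtained by counting the (weighted) even divisors passing through prescribed points of $\tilde\Gamma$ — yield $2\big(g^\ast(g(x)) + x - \iota x\big)$.

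I expect the main obstacle to be precisely the bookkeeping in the dilated fibers of types $\romanNumeralCaps{1}$ and $\romanNumeralCaps{2}$: the clean even-weight agreement count of type $\romanNumeralCaps{3}$ no longer applies verbatim, and one must confirm that the weighted pairing still collapses to exactly $2$ and $2 \pm 2/d_p(z)$ once the binomial local degrees from \cref{def:Donagi_construction} and the nontrivial coefficients $z(x)$ are inserted. This is a finite but delicate verification, best organized through the identifications of \cref{notation} so that each dilated case is read off directly from the type $\romanNumeralCaps{3}$ computation.
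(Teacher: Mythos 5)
Your proposal is correct, and at bottom it performs the same fiberwise verification as the paper, but it packages the computation differently. The paper's proof fixes coordinates so that $z = x^{++++}$ in a fiber of type $\romanNumeralCaps{3}$, writes out the four divisors $S^T(x_i^+)$ (respectively the four $S(x^{\pm\pm\pm\pm})$ for the second identity) explicitly, sums them, and reads off $2\big(p^\ast(p(z)) + z - \iota z\big)$; types $\romanNumeralCaps{2}$ and $\romanNumeralCaps{1}$ are then dispatched by the formal identifications of \cref{notation} in one sentence — exactly the step you also defer. Your route instead isolates the closed coefficient formula $[S^T \circ S(z)](w) = d_p(w)\sum_x z(x)w(x)/d_g(x)$ and reduces the lemma to three pairing values, proved in type $\romanNumeralCaps{3}$ by the even-agreement parity count; this is a genuinely different decomposition (a key-lemma reduction rather than an enumeration), and it buys a uniform argument with no choice of representative and a symmetric treatment of both identities. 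Moreover, the step you flag as the main obstacle does go through: since $\tilde\Gamma \to \Gamma$ is free, the local degrees of \cref{def:Donagi_construction} reduce to pure binomial coefficients, and a direct check confirms your three values in the dilated fibers — for instance in type $\romanNumeralCaps{1}$ with $z = 2x_1^+ + x_1^- + x_4^-$ one has $d_p(z) = 3$, $\langle z, z\rangle = 8/3$, and $\langle z, \iota z\rangle = 4/3$, so that $d_p(z)\langle z,z\rangle = 8 = 2d_p(z) + 2$ and $d_p(z)\langle z,\iota z\rangle = 4 = 2d_p(z) - 2$, while all generic pairings collapse to $2$; type $\romanNumeralCaps{2}$ behaves analogously. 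So your reduction is sound and arguably documents the dilated cases more transparently than the paper's one-line appeal to formal identification; what the paper's enumeration buys in exchange is that it needs no auxiliary formula and reuses the explicit fiber lists already displayed in the proof of \cref{prop:triality}.
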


\begin{proof}
    Let $z \in \tilde\Lambda_1$ and assume that $p(z)$ is of type $\romanNumeralCaps{3}$. The computations for types $\romanNumeralCaps{2}$ and $\romanNumeralCaps{1}$ arise from the proof of type $\romanNumeralCaps{3}$ by formally identifying $x_i = x_j$ and $x_i = x_j = x_k$ for suitable $i, j, k$, respectively.     
    Without loss of generality we choose the labelling of the fibre in $\tilde \Gamma$ over $p(z)$ such that $z = x^{++++}$. With this we may compute
    $$S^T\circ S (x^{++++})=S^T(x_1^+)+S^T(x_2^+)+S^T(x_3^+)+S^T(x_4^+).$$ 
    Then, one computes individually: 
    \begin{align*}
        S^T(x_1^+) = x^{++++}+x^{++--}+x^{+-+-}+x^{+--+}\\
        S^T(x_2^+) = x^{++++}+x^{++--}+x^{-++-}+x^{-+-+}\\
        S^T(x_3^+) = x^{++++}+x^{+-+-}+x^{-++-}+x^{--++}\\
        S^T(x_4^+) = x^{++++}+x^{+--+}+x^{-+-+}+x^{--++}
    \end{align*}
    and so the formula for $S^T\circ S$ follows.
    For the second part we let $x \in \tilde \Gamma$ and label the fibre such that $x = x_1^+$. Then
    $$S\circ S^T(x)=S(x^{++++})+S(x^{++--})+S(x^{+-+-})+S(x^{+--+}).$$
    Then one computes individually: \begin{align*}
        S(x^{++++})= x + x_2^+ +x_3^+ +x_4^+\\
        S(x^{++--})= x + x_2^+ +x_3^- +x_4^-\\
        S(x^{+-+-})= x + x_2^- +x_3^+ +x_4^-\\
        S(x^{+--+})= x + x_2^- +x_3^- +x_4^+
    \end{align*}
    but knowing that $x_i^-=\iota x_i^+$, the result follows.
\end{proof}

For the following proposition we view the Prym variety as the locus of (classes of) antisymmetric divisors in the Jacobian. Since the double covers are all free, the Prym is really a subvariety of the Jacobian.
Let us denote $P = \Prym(\Gam/\Gamma)$ and $P_1 = \Prym(\tilde\Lambda_1/\Lambda_1).$

\begin{proposition} \label{prop:S_Prym}
    The maps of Jacobians $S$ and $S^T$ restrict to maps of Prym varieties $s : P_1 \to P$ and $s^t : P \to P_1$.
\end{proposition}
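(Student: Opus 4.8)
The plan is to use the description of the Prym as the locus of antisymmetric divisor classes. Recall that, under the Abel--Jacobi isomorphism $\Jac \cong \Pic^0$, the Prym $P$ (respectively $P_1$) is identified with the image in $\Pic^0(\Gam)$ (respectively $\Pic^0(\tilde\Lambda_1)$) of the divisors of the form $D - \iota_\ast D$. Since $S$ and $S^T$ are already group homomorphisms of Jacobians by \cref{prop:S_jacobian}, it suffices to show that they carry antisymmetric classes to antisymmetric classes, and the cleanest way to do this is to prove the single equivariance statement
\[ S \circ \iota_\ast = \iota_\ast \circ S \qquad\text{and}\qquad S^T \circ \iota_\ast = \iota_\ast \circ S^T, \]
where on each side $\iota$ denotes the relevant involution (on $\tilde\Lambda_1$ on one side and on $\Gam$ on the other). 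Here I use that, as observed in the proof of \cref{prop:triality}, the involution on $\tilde\Lambda_1 = \tilde C_1$ acts on a point $z = x^{\epsilon_1\epsilon_2\epsilon_3\epsilon_4}$ by the global sign flip $z \mapsto x^{-\epsilon_1-\epsilon_2-\epsilon_3-\epsilon_4}$, while $\iota_\ast$ on $\Div(\Gam)$ sends $x_i^{\epsilon_i} \mapsto x_i^{-\epsilon_i}$.

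For $S$ the equivariance is immediate: since $S$ merely reinterprets the formal sum $x_1^{\epsilon_1}+\dots+x_4^{\epsilon_4}$ as a divisor, the global sign flip on the source corresponds termwise to $\iota_\ast$ on the target, so $S(\iota z) = \iota_\ast S(z)$ on points, and the claim follows by linearity. For $S^T$ I would argue fiberwise using the coordinate description $S^T : x \mapsto \sum x + x_2^{\epsilon_2} + x_3^{\epsilon_3} + x_4^{\epsilon_4}$, where the sum runs over all sign triplets completing $x$ to a point of $\tilde\Lambda_1$. The defining condition ``lies in $\tilde\Lambda_1$'' is exactly a parity condition on the number of minus signs; since $4$ is even, the global sign flip preserves this parity and hence preserves $\tilde\Lambda_1$, matching the completions of $x$ bijectively with the completions of $\iota x$. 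This is precisely $S^T(\iota x) = \iota_\ast S^T(x)$. By \cref{notation}, the fibers of type $\romanNumeralCaps{2}$ and $\romanNumeralCaps{1}$ arise from type $\romanNumeralCaps{3}$ via the formal identifications $x_i = x_j$ and $x_i = x_j = x_k$, and as these identifications are compatible with the sign flip, the equivariance over the degenerate fibers follows from the type $\romanNumeralCaps{3}$ computation.

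Granting the equivariance, the conclusion is formal. If $\alpha = [E - \iota_\ast E] \in P_1$ is an antisymmetric class, then
\[ S(\alpha) = \big[ S(E) - S(\iota_\ast E) \big] = \big[ S(E) - \iota_\ast S(E) \big], \]
which is again antisymmetric and therefore lies in $P$. The identical computation with $S^T$ in place of $S$ shows $S^T(P) \subseteq P_1$. Combined with \cref{prop:S_jacobian}, this exhibits the desired group homomorphisms $s : P_1 \to P$ and $s^t : P \to P_1$ as the restrictions of $S$ and $S^T$.

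I expect the only genuine work to lie in the equivariance of $S^T$: unlike $S$, whose value at a point is tautological, the value of $S^T$ at a point is a sum over a fiber, so one must verify that the bookkeeping of ``which completions lie in $\tilde\Lambda_1$'' is respected by $\iota$. The parity-of-signs observation above is what makes this routine, and it also explains conceptually why the argument works only because the degree is even.
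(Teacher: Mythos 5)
Your proof is correct and takes essentially the same route as the paper's: both reduce the statement to showing that $S$ and $S^T$ commute with the respective involutions $\iota$ (using that Prym classes can be represented by antisymmetric divisors), note that this is immediate for $S$, and verify it for $S^T$ fiberwise in type $\romanNumeralCaps{3}$, with types $\romanNumeralCaps{2}$ and $\romanNumeralCaps{1}$ obtained by the formal identifications of \cref{notation}. The only cosmetic difference is that where the paper writes out the four summands of $S^T(\iota x_1^+)$ explicitly, you substitute the parity-of-signs observation (valid because the degree $4$ is even), which is the same fact the paper itself invokes to show that $\iota$ restricts to $\tilde C_1$ and $\tilde C_2$.
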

\begin{proof}
    An element of a Prym variety can always be chosen antisymmetric (see \cite[Lemma~5.9]{lenIntroductionToDivisors}). Hence it is sufficient to verify that the image of an antisymmetric divisor is indeed antisymmetric, which in turn can be achieved by showing that $S$ and $S^T$ commute with the respective involutions $\iota$. For $S$ this is obvious by the definition of $\iota$ on the output of the Donagi construction. 
    For $S^T$ we let $x\in \tilde\Gamma$ and assume first that the fibre which contains $x$ is of type $\romanNumeralCaps{3}$. We label the fibre with $\{x_1^\pm, \ldots, x_4^\pm\}$ such that $x = x_1^+$ and such that $\tilde\Lambda_1$ is the even part of the tetragonal construction. With this we compute
    \begin{align*}
        S^T(\iota x_1^+) = S^T(x_1^-) ={}& x^{----} + x^{-++-} + x^{-+-+} + x^{--++}\\
        ={}& \iota \big(x^{++++}+x^{+--+}+x^{+-+-}+x^{++--} \big)
        = \iota S^T(x_1^+)
    \end{align*}
    and this verifies the claim for a fibre of type $\romanNumeralCaps{3}$. The verifications for types $\romanNumeralCaps{2}$ and $\romanNumeralCaps{1}$ are obtained from this computation by formally identifying suitable $x_i = x_j$ and $x_i = x_j = x_k$, respectively.
\end{proof}

\begin{corollary} \label{cor:4Id}
    We have $s^t\circ s = 4\cdot \operatorname{Id}_{P_1}$ and $s\circ s^t = 4\cdot \mathrm{Id}_{P}$.
\end{corollary}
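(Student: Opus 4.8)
The plan is to read the corollary directly off the two identities of \cref{FormulaOneS} by extending them $\ZZ$-linearly to degree-$0$ divisors and then restricting to antisymmetric classes, where the cross terms conveniently vanish.

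First I would extend the formulas of \cref{FormulaOneS} by linearity. For a divisor $D = \sum_i n_i z_i \in \Div^0(\tilde\Lambda_1)$ this gives
\[ S^T \circ S(D) = 2\Big( p^\ast(p_\ast D) + D - \iota D \Big), \]
where I used $\sum_i n_i\, p^\ast(p(z_i)) = p^\ast\big(\sum_i n_i\, p(z_i)\big) = p^\ast(p_\ast D)$ and the linear extension of $\iota$. Symmetrically, for $F \in \Div^0(\tilde\Gamma)$ one obtains $S \circ S^T(F) = 2\big( g^\ast(g_\ast F) + F - \iota F\big)$.

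Next, by \cref{prop:S_Prym} the maps $S$ and $S^T$ restrict to the Pryms, and by \cite[Lemma~5.9]{lenIntroductionToDivisors} every class in $P_1$ has an antisymmetric representative. So let $D$ be antisymmetric, i.e. $\iota D = -D$. Then the symmetric term becomes $D - \iota D = 2D$, and the key point is that the cross term vanishes: $p_\ast D = 0$. Indeed, $p$ factors as $\tilde\Lambda_1 \to \Lambda_1 \to K$ through the double cover $\tilde\Lambda_1 \to \Lambda_1$, and pushforward along a double cover annihilates antisymmetric divisors. Over an undilated point the two preimages $q$ and $\iota q$ contribute $D(q) + D(\iota q) = 0$, while over a dilated point the single preimage satisfies $D(q) = -D(q)$, hence $D(q) = 0$; therefore the pushforward to $\Lambda_1$ already vanishes and a fortiori $p_\ast D = 0$. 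Consequently $p^\ast(p_\ast D) = 0$ and $S^T \circ S(D) = 4D$, which is exactly $s^t\circ s = 4\cdot\operatorname{Id}_{P_1}$.

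The argument for $s\circ s^t = 4\cdot\operatorname{Id}_{P}$ is identical: writing $g = f\circ\pi$, the map factors through the double cover $\pi:\tilde\Gamma \to \Gamma$, so $g_\ast$ annihilates antisymmetric divisors, giving $g^\ast(g_\ast F) = 0$ and $S\circ S^T(F) = 4F$ for antisymmetric $F$. I do not anticipate any real obstacle here; the only point requiring care is the vanishing $p_\ast D = 0$ (resp. $g_\ast F = 0$) on antisymmetric divisors, since this is precisely what combines the prefactor $2$ from \cref{FormulaOneS} with the identity $D - \iota D = 2D$ to produce the stated $4\cdot\operatorname{Id}$.
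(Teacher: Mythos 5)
Your proposal is correct and takes essentially the same approach as the paper: both read the corollary off \cref{FormulaOneS}, with the pullback term dying on antisymmetric elements. The paper simply evaluates on generators $z - \iota z$, where $2p^\ast(p(z))$ and $2p^\ast(p(\iota z))$ cancel because $p\circ\iota = p$ --- the same cancellation you package globally as $p_\ast D = 0$ for antisymmetric $D$.
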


\begin{proof}
    Let $p: \tilde \Lambda_1 \to K$ be the structural map. Using the first formula of Lemma~\ref{FormulaOneS}, we obtain 
    \begin{align*}
        s^t \circ s(z - \iota z) &= 2 \big( p^\ast(p(z)) + z - \iota z \big) - 2\big( p^\ast(p(z)) + \iota z - z \big) \\
        &= 4(z - \iota z)
    \end{align*}
    for any $z \in \tilde\Lambda_1$. Since any $D - \iota D \in P_1$ is a formal sum of elements of the form $z - \iota z$, this shows the first part. The second part follows in the exact same fashion using the second formula of \cref{FormulaOneS}.
\end{proof}

\begin{proposition}\label{polarization}
    The maps $s$ and $s^t$ double the induced polarisations of the Prym varieties.
\end{proposition}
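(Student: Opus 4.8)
The plan is to work in the divisorial model, where $P = \Prym(\Gam/\Gamma)$ and $P_1 = \Prym(\tilde\Lambda_1/\Lambda_1)$ are the groups of antisymmetric divisor classes and the induced polarization $\xi$ is the restriction of the Euclidean norm $\|D\|^2 = \int (f')^2$ with $\div(f)=D$, so that $\zeta = \xi/2$ is the principal one. Fix an antisymmetric class $\delta \in P_1$ and write $\delta = \div(f)$ for an odd piecewise linear function $f : \tilde\Lambda_1 \to \R$. By \cref{prop:pushforward_div} we have $s(\delta) = \div(\tilde f)$ with $\tilde f$ the pushforward of \cref{lem:function}, and \cref{lem:slopes} gives its slope on an edge $\tilde e$ of $\Gam$ lying over $e \in E(K)$ as $\tilde f'(\tilde e) = \sum_u [S(u)](\tilde e)\,f'(u)$, the sum ranging over the edges $u$ of $\tilde\Lambda_1$ above $e$. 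The first step is to substitute this into $\|s(\delta)\|^2 = \sum_{\tilde e} \ell(\tilde e)\,\tilde f'(\tilde e)^2$ and to compare with $\|\delta\|^2 = \sum_u \ell(u)\,f'(u)^2$ edge by edge over $K$; since both sides split as sums over the fibers of $K$, everything reduces to a single fiber.

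Over an edge $e$ of type $\romanNumeralCaps{3}$ I would label the points of $\Gam$ by $x_i^\sigma$ and the even divisors in $\tilde\Lambda_1$ by $x^\epsilon$, so that $[S(x^\epsilon)](x_i^\sigma) = 1$ exactly when $\epsilon_i = \sigma$. The slopes $\bigl(\tilde f'(x_i^\sigma)\bigr)_{i,\sigma}$ are then obtained from $\bigl(f'(x^\epsilon)\bigr)_\epsilon$ by a $\pm 1$ sign matrix, and oddness of $f$ (that is $f'(x^{-\epsilon}) = -f'(x^\epsilon)$) turns it into a genuine Hadamard transform on the four independent slopes. Its orthogonality relation $H^{\mathsf T}H = 4\,\Id$ is precisely what makes $\|s(\delta)\|^2$ a fixed multiple of $\|\delta\|^2$, independently of the values $f'(x^\epsilon)$: one finds that $s$ doubles the induced-polarization norm (equivalently, the quadratic form of $\xi$ is multiplied by four). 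For edges of type $\romanNumeralCaps{2}$ and $\romanNumeralCaps{1}$ I would rerun this computation after the identifications of \cref{notation} ($x_i = x_j$, respectively $x_i = x_j = x_k$); the new feature is that a dilated point of dilation $d$ carries edge length $\ell(e)/d$ and enters the slope formula with multiplicity equal to its coefficient, and a direct check (e.g.\ for the type $\romanNumeralCaps{1}$ fiber with dilations $3$ and $1$) shows the weighted contributions rebalance to the same constant.

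The map $s^t$ is treated symmetrically, using $S^T(\div g) = \div(\tilde g)$ with $\tilde g(z) = \sum_{x : z(x) > 0} z(x)\,g(x)$ and the evident analogue of \cref{lem:slopes}; the same fiberwise orthogonality applies. As a cross-check and an alternative endgame, one can argue abstractly: if $s$ and $s^t$ each multiply the induced norm by a fixed constant, then by \cref{cor:4Id} the composite $s^t\circ s = 4\,\Id_{P_1}$ forces the product of the two constants to be $4$, and the symmetry of the construction forces them to be equal, pinning each one down. I expect the genuine difficulty to lie entirely in the dilated fibers: there $S$ and $S^T$ are not naively adjoint, so the clean relation cannot be quoted from an adjunction but must be squeezed out of the weighted fiberwise sum, keeping careful track of how the dilation factors appear simultaneously in the multiplicities $[S(u)](\tilde e)$ and in the edge lengths $\ell(\tilde e) = \ell(e)/d(\tilde e)$.
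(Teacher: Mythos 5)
Your proposal is correct and follows essentially the same route as the paper's proof: the divisorial model, the pushforward function of \cref{lem:function} with the slope formula of \cref{lem:slopes}, a fiberwise comparison of the Euclidean norms over each edge of $K$ with a separate rebalancing check for the dilated profiles, and \cref{cor:4Id} to transfer the scaling constant from one of the two maps to the other. The only cosmetic differences are that you compute with $s$ where the paper starts from $s^t$ (the sign matrix is symmetric, so it is the same calculation) and that you invoke the Hadamard orthogonality $H^{\mathsf T}H = 4\,\Id$ where the paper expands the four squares explicitly.
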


\begin{proof}

	Let $D -\iota D\in\Div(\Gam)$ and let $f$ (not necessarily rational) satisfy $\div(f)=D-\iota D$. Possibly after simultaneously refining the models of $\tilde \Gamma$, $\Gamma$, and $K$ we may assume that $D-\iota D$ is supported on vertices. We fix an orientation on $K$ and consider $\tilde \Gamma$ and $\Gamma$ with the compatible orientations. Denote $e_i$ the slope of $f$ at any point of the $i$-th preimage of the edge $e$ (the antisymmetry makes that we do not need to differentiate the slope for the two preimages of $e_i$ because their squares will be the same). As of now, we assume that every edge has dilation profile $\romanNumeralCaps{3}$. Evaluating the polarisation from \cref{eq:pol_Pic} yields
	\begin{equation}\label{normOne}
	    ||D-\iota D||^2=\sum_{e\in E(K)}2 \ell(e) \cdot (e_1^2+e_2^2+e_3^2+e_4^2).
	\end{equation}
    Then, using \cref{lem:slopes} one establishes that 
    \begin{equation} \label{normTwo}
        \begin{aligned}
            ||s^t(D-\iota D)||^2=&\sum_{e\in E(K)}2 \ell(e) \cdot \Big((e_1+e_2+e_3+e_4)^2+(e_1+e_2-e_3-e_4)^2 \\
            &+(e_1-e_2+e_3-e_4)^2+(e_1-e_2-e_3+e_4)^2 \Big) \\
            =&\sum_{e\in E(K)}2 \ell(e) \cdot(4e_1^2+4e_2^2+4e_3^2+4e_4^2) \\
            ={}& 4||D-\iota D||^2.
        \end{aligned}
    \end{equation}
    This shows that the Euclidean norm is doubled when applying the map $s^t$.

	Now we consider the other types of dilation profiles. Assume that an edge $e$ has type $\romanNumeralCaps{2}$. Without loss of generality, we assume that the first and second preimage of the edge $e$ are the same and we write $e_1 = e_2$ as half of the slope on that edge. Again by \cref{lem:slopes} the term corresponding to $e$ in the sum of Equation~\eqref{normOne} becomes 
	$$ 2\frac{\ell(e)}{2} \cdot (2e_1)^2+2\ell(e)(e_3^2+e_4^2) =2\ell(e) \cdot(e_1^2+e_2^2+e_3^2+e_4^2). $$ 
	Therefore, with dilation profiles of type $\romanNumeralCaps{2}$, Equation~\eqref{normOne} remains unchanged. The proof is similar for dilation profile $\romanNumeralCaps{1}$. Therefore, Formula~\eqref{normOne} does not depend on the dilation profiles.

	Now we prove that Formula~\eqref{normTwo} also does not depend on the dilation profiles. Assume that the edge $e$ has dilation profile of type $\romanNumeralCaps{2}$. Under the same notation as before, the corresponding term in Equation~\eqref{normTwo} becomes 
	\begin{align*}
        &2\ell(e) \big((e_1+e_2+e_3+e_4)^2+(e_1+e_2-e_3-e_4)^2 \big) \\
        &+ \ell(e) \big((e_1-e_2+e_3-e_4)^2+(e_1-e_2-e_3+e_4)^2 \big)\\
        ={} &2\ell(e)(4e_1^2+4e_2^2+4e_3^2+4e_4^2).
    \end{align*}
	The proof is similar for the dilation profile  $\romanNumeralCaps{1}$. Hence it follows that $s^t$ doubles the induced polarisation. The statement for $s$ follows from \cref{cor:4Id}.
\end{proof}

The results of this section so far can be translated into the language of lattices as follows:

\begin{corollary}
    The correspondences $S$ and $S^T$ induce maps
    \[ S : H_1(\tilde\Lambda_1, \ZZ) \longrightarrow H_1(\tilde\Gamma, \ZZ) \qquad \text{ and } \qquad
            S^T : H_1(\tilde \Gamma, \ZZ) \longrightarrow H_1(\tilde\Lambda_1, \ZZ). \]
    These maps descend to Prym varieties. More specifically, let $L = \Ker \pi_\ast$ the lattice defining $\Prym(\tilde\Gamma / \Gamma)$ of $\pi : \tilde\Gamma \to \Gamma$ and $L_1$ the lattice defining $\Prym(\tilde \Lambda_1 / \Lambda_1)$. Then we have $S\vert_{L_1} : L_1 \to L$ and $S^T\vert_{L} : L \to L_1$.
\end{corollary}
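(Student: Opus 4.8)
The plan is to realize $S$ and $S^T$ as integral chain maps and then restrict them successively to cycles and to the antisymmetric parts of $H_1$. The starting point is that, read as maps of $1$-chains, $S$ and $S^T$ are simply the $\ZZ$-linear extensions of the combinatorial correspondences of \cref{def:Donagi_construction}: an edge $u$ of $\tilde\Lambda_1$ over $e \in E(K)$ is sent to the integral $1$-chain $S(u)=\sum_i x_i^{\epsilon_i}$ made of the four edges of $\Gam$ over $e$ prescribed by the tetragonal construction, and symmetrically for $S^T$. That this chain-level map is the homological shadow of the Jacobian homomorphism of \cref{prop:S_jacobian} is exactly \cref{lem:slopes}: for $D=\div(f)$ the slope of the pushforward $\tilde f$ on $\tilde e$ equals $\sum_u [S(u)](\tilde e)\,f'(u)$, which is $S$ applied to the slope-chain $\sum_u f'(u)\,u$ representing $D$ under Abel--Jacobi. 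Together with \cref{prop:pushforward_div} this matches the two descriptions, and the analogous statements for $S^T$ were already recorded in the proof of \cref{prop:S_jacobian}.

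Next I would verify that $S$ and $S^T$ commute with the boundary operator $\partial$. This is forced by the construction: the root map of a Donagi point $z=\sum_i x_i^{\epsilon_i}$ is $r(z)=\sum_i r(x_i^{\epsilon_i})$, and the triality identification was shown in \cref{prop:triality} to respect root maps. Orienting all preimages of a given edge of $K$ compatibly, one then reads off $\partial S(u)=\sum_i \partial(x_i^{\epsilon_i})=S(\partial u)$, and likewise $\partial\,S^T = S^T\partial$. Being $\ZZ$-linear chain maps, $S$ and $S^T$ carry integral cycles to integral cycles, which yields
\[ S : H_1(\tilde\Lambda_1,\ZZ)\longrightarrow H_1(\Gam,\ZZ), \qquad S^T : H_1(\Gam,\ZZ)\longrightarrow H_1(\tilde\Lambda_1,\ZZ). \]

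It remains to descend to the Prym lattices. Since $\pi:\Gam\to\Gamma$ is free, every edge $e$ of $\Gamma$ has two preimages $\te^+,\te^-$ exchanged by $\iota$, and for a cycle $\gamma=\sum_{\tilde e}b_{\tilde e}\tilde e$ both conditions $\pi_\ast\gamma=0$ and $\iota_\ast\gamma=-\gamma$ amount to $b_{\te^+}=-b_{\te^-}$ for every $e$. Hence
\[ L=\Ker\pi_\ast=\set{\gamma\in H_1(\Gam,\ZZ)\mid \iota_\ast\gamma=-\gamma}, \]
the lattice of antisymmetric integral cycles, and likewise for $L_1\subseteq H_1(\tilde\Lambda_1,\ZZ)$. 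By \cref{prop:S_Prym} the maps $S$ and $S^T$ commute with the respective involutions $\iota$, so they carry antisymmetric cycles to antisymmetric cycles; therefore $S(L_1)\subseteq L$ and $S^T(L)\subseteq L_1$, giving the asserted restrictions $S\vert_{L_1}:L_1\to L$ and $S^T\vert_{L}:L\to L_1$.

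The main obstacle I anticipate is the bookkeeping of the first two paragraphs: verifying that the single combinatorial rule $z\mapsto\sum_i x_i^{\epsilon_i}$ simultaneously computes $S$ on divisors, on slope-chains (so as to agree with the Jacobian homomorphism of \cref{prop:S_jacobian} through \cref{lem:slopes}), and on $1$-chains, where commutation with $\partial$ comes from the root-map compatibility of \cref{prop:triality}; the dilation factors entering $S^T$ demand the extra check that the analogous coefficients remain integral on cycles. By contrast, once $\Ker\pi_\ast$ is identified with the antisymmetric integral cycles, the passage to the Prym lattices is purely formal, resting only on the $\iota$-equivariance of \cref{prop:S_Prym}.
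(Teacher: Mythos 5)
Your proposal is correct, but it takes a genuinely different route from the paper. The paper works inside the space of formal sums of edges equipped with the length pairing: there the lattice $H_1$ is the orthogonal complement of the slope-vectors of piecewise linear functions, and since $s$ and $s^t$ double the polarization (\cref{polarization}) and preserve the function space (\cref{prop:S_jacobian}), they preserve the $H_1$ lattices; the restriction to $L$ and $L_1$ is then deduced from \cref{prop:S_Prym}, exactly as in your last step. You instead argue at the chain level: $S$ and $S^T$ are integral chain maps commuting with $\partial$, hence send integral cycles to integral cycles, and you make the descent to the Prym lattices transparent by identifying $\Ker\pi_\ast$ with the antisymmetric integral cycles (valid here because the double covers are free) before invoking the $\iota$-equivariance from \cref{prop:S_Prym}. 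Your route is purely combinatorial --- it needs neither the metric nor the polarization, works already for unmetrized graphs, and exhibits the induced maps explicitly on cycles --- whereas the paper's orthogonality argument is shorter because it recycles the metric facts already proven. One point of precision: for $S$ the relation $\partial S = S\partial$ is indeed immediate from the root map in \cref{def:Donagi_construction}, but for $S^T$ the weights $d(z)z(x)/d(x)$ change across vertices where the fiber type changes, so verifying $\partial S^T = S^T\partial$ there amounts to the same case analysis as the continuity proof of \cref{lem:function} (and its $S^T$ analogue invoked in \cref{prop:S_jacobian}). That vertex-matching of weighted coefficients, rather than integrality, is the genuine extra check your argument requires; integrality is automatic since $S^T$ is divisor-valued by definition. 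You flag the right place in your final paragraph, so this is a matter of naming the obstacle correctly rather than a gap.
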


\begin{proof}    
    The polarisation can be considered a pairing on the space of formal sums of edges. Inside this space, the lattice $H_1$ is the orthogonal complement of the space of piecewise linear functions.
    The maps $s$ and $s^t$ considered on the space of formal sums of edges double the polarisation by \cref{polarization} and leave the space of piecewise linear functions invariant by \cref{prop:S_jacobian}. Hence they also preserve the $H_1$ lattices. 
    The restriction to the lattices $L$ and $L_1$ follows from \cref{prop:S_Prym}.
\end{proof}

We now need to add the hypothesis that $K$ is a tree. Up until now, this was not necessary.

\begin{proposition} \label{prop:divide_by_2}
    Let $\tilde\Gamma \to \Gamma \to K$ be a tower where $K$ is a tree, and $\tilde\Lambda_1 \to \Lambda_1 \to K$ the even tetragonal output. 
    We claim that $S : L_1 \to L$ and $S^T : L \to L_1$ both factor as an isomorphism followed by multiplication by 2.

    We name $\psi: L_1\to L$ the isomorphism in the factorisation of $S$.
\end{proposition}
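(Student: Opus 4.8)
The plan is to reduce the statement to two divisibility assertions and then deduce everything formally from \cref{cor:4Id}. Suppose we have shown that $S(L_1) \subseteq 2L$ and $S^T(L) \subseteq 2L_1$. Then $\psi := \tfrac12 S : L_1 \to L$ and $\psi^t := \tfrac12 S^T : L \to L_1$ are well-defined: for $\alpha \in L_1$ the element $\tfrac12 S(\alpha)$ has integer coefficients by assumption, it is antisymmetric since $S$ commutes with $\iota$ (\cref{prop:S_Prym}), and it is a $1$-cycle because halving a $1$-cycle with all-even coefficients is again a $1$-cycle; hence it lies in $\Ker \pi_\ast = L$, and similarly for $\psi^t$. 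By \cref{cor:4Id} we then compute $\psi^t \circ \psi = \tfrac14(S^T \circ S) = \operatorname{Id}_{L_1}$ and $\psi \circ \psi^t = \tfrac14(S \circ S^T) = \operatorname{Id}_L$, so $\psi$ and $\psi^t$ are mutually inverse isomorphisms and $S = 2\psi$, $S^T = 2\psi^t$. This is exactly the claimed factorization, so the whole proposition reduces to proving $S(L_1) \subseteq 2L$ and $S^T(L) \subseteq 2L_1$.

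By the triality of \cref{prop:triality} the curve $\tilde\Gamma$ is the even tetragonal output of $\tilde\Lambda_1$ and $S^T$ is the analogue of $S$ for the tower based at $\tilde\Lambda_1$, so the second inclusion follows from the first with the roles of $\tilde\Gamma$ and $\tilde\Lambda_1$ exchanged, and it suffices to prove $S(L_1) \subseteq 2L$. Fix $\alpha \in L_1$, written as an antisymmetric integral $1$-cycle $\alpha = \sum_u a_u(\tilde u^+ - \tilde u^-)$ on $\tilde\Lambda_1$. For each edge $e_K \in E(K)$ I would compute, using \cref{lem:slopes} together with the combinatorial description of $S$, the coefficient of every edge of $\tilde\Gamma$ over $e_K$ in the cycle $S(\alpha)$, carrying out the computation separately for fibers of type $\romanNumeralCaps{3}$, $\romanNumeralCaps{2}$, $\romanNumeralCaps{1}$ exactly as in the case distinction of \cref{lem:function}. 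The outcome is that the coefficients on all edges lying over \emph{undilated} coordinates share a common parity $m(e_K) \in \ZZ/2$, and that $S(\alpha)$ has only even coefficients over $e_K$ if and only if $m(e_K) = 0$.

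The remaining task is to prove that $m(e_K) = 0$ for every edge, and this is where the tree hypothesis enters and where I expect the main difficulty to lie. Let $\theta = S(\alpha) \bmod 2$ be the reduction of $S(\alpha)$ to a $\ZZ/2$-valued $1$-cycle on $\tilde\Gamma$, and let $O = \{ e_K \in E(K) : m(e_K) = 1\}$ be the set of odd edges. At each vertex $v_K \in V(K)$ I would choose a vertex $\tilde w$ in the fiber over $v_K$ whose coordinate is undilated at $v_K$; such a vertex exists in every fiber type. Since the dilation factor can only increase when passing from an edge to an incident vertex, this coordinate stays undilated along every edge $e_K$ incident to $v_K$, so $\tilde w$ is incident to exactly one edge of $\tilde\Gamma$ over each such $e_K$, and that edge occurs in $\theta$ precisely when $m(e_K) = 1$. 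The cycle condition $(\partial \theta)_{\tilde w} = 0$ then reads $\deg_O(v_K) \equiv 0 \pmod 2$, so $O$ is an even subgraph of $K$; as $K$ is a tree its cycle space is trivial, forcing $O = \emptyset$ and hence $m(e_K) = 0$ for all $e_K$. This yields $S(\alpha) \in 2L$ and, with the symmetric statement, completes the reduction from the first paragraph. The main obstacle is the parity bookkeeping of the second paragraph across the three fiber types, together with the careful selection of an undilated vertex, which is what makes the reduction to ``$O$ is a cycle in the tree $K$'' work uniformly.
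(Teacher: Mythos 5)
Your proposal is correct, but the key step runs along a genuinely different track than the paper's proof. Both arguments share the formal frame: reduce everything to the divisibility statements $S(L_1) \subseteq 2L$ and $S^T(L) \subseteq 2L_1$, and then let \cref{cor:4Id} turn $\tfrac12 S$ and $\tfrac12 S^T$ into mutually inverse isomorphisms (the paper compresses this, getting the factorization of $S^T$ from that of $S$ via \cref{cor:4Id}, while you obtain the second inclusion from the symmetry of the triality of \cref{prop:triality} --- both are legitimate, and your version of the bookkeeping is arguably cleaner). Where you diverge is in how the tree hypothesis enters. The paper constrains the \emph{input}: each edge $e \in E(K)$ is a cut of the tree, so a cycle has zero net flow across the fiber over $e$, whence the $E^+$-part of any element of $L_1$ has even coordinate sum; fiberwise the restrictions of $L_1$ thus lie in the even-sum sublattice $\langle U\rangle$, and the explicit identities $AU = 2UM$ with $M \in \GL(\ZZ)$ (one per fiber type) deliver divisibility and a fiberwise isomorphism in one stroke. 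You instead constrain the \emph{output}: you note that all coefficients of $S(\alpha)$ on undilated edges over $e$ share a parity $m(e)$ unconditionally, reduce mod $2$, and use the cycle condition at a well-chosen undilated vertex $\tilde w$ over each vertex of $K$ to conclude that the odd-edge set $O$ is an even subgraph of $K$, hence empty since $K$ is a tree. This argument is sound: each fiber type $\romanNumeralCaps{1}$, $\romanNumeralCaps{2}$, $\romanNumeralCaps{3}$ has an undilated coordinate (here you silently use the exclusion of profiles $(2,2)$ and $(4)$ from \cref{def:good_types}), harmonicity forces the dilation of $\tilde w$ to dominate that of its incident edges so there is exactly one undilated edge at $\tilde w$ over each $e_K$, and there are no loops over a tree, so $(\partial\theta)_{\tilde w} = \deg_O(v_K) \bmod 2$ as you claim. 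The one soft spot is your second paragraph, where the parity computation is asserted rather than carried out; it does check out --- it is exactly the mod-$2$ shadow of the paper's matrices $A$, whose rows indexed by undilated coordinates are all congruent to $(1, \dots, 1)$, with the dilated coordinate automatically even in type $\romanNumeralCaps{2}$ and of parity $m(e)$ in type $\romanNumeralCaps{1}$ --- but in a final write-up it must be displayed. In exchange, your mod-$2$ homological route is softer and avoids the paper's explicit $2\times$-automorphism computations, at the price of seeing the isomorphism only formally through \cref{cor:4Id} rather than fiberwise.
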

\begin{proof}
    Since $K$ is a tree, for any fixed edge $e\in E(K)$ we consider the fibre over $e$ in $E(\tilde \Lambda_1)$. This fibre can be partitioned as $E^+\cup E^-$ such that $\iota(E^+) = E^-$. For a given cycle $\gamma =\sum_{e\in E(\tilde\Lambda_1)}a_ee\in H_1(\tilde\Lambda_1,\Z)$, we have that 
    $$\sum_{e\in E^+}a_e \equiv \sum_{e\in E^-}a_e \mod 2$$ 
    hence the cycle $\gamma -\iota \gamma$ has an even sum of coefficients in $E^+$. 
    
    \textbf{Case: $e$ is of type \romanNumeralCaps{3}.} Since any element in $P_1$ can be written as $\gamma - \iota \gamma$, we know that the part with edges in $E^+$ of any element of $P_1$ is in the $\ZZ$-span of the columns of
        \[ U = \begin{pmatrix} 
            2 & 1 & 0 & 0  \\
            0 & -1 & 1 & 0  \\
            0 & 0 & -1 & 1  \\
            0 & 0 & 0 & -1  \end{pmatrix}.  \]
    \noindent
    Now the part of $S(\gamma - \iota\gamma)$ in $E^+$ is given by multiplication by \[
        A = \begin{pmatrix}
        1 & 1 & 1 & 1 \\
        1 & 1 & -1 & -1 \\
        1 & -1 & 1 & -1 \\
        1 & -1 & -1 & 1
    \end{pmatrix}
    \] and we see that $A$ acting on the subspace $\langle U \rangle \subseteq \ZZ^{E^+}$
    \[ AU = 2U \underbrace{\begin{pmatrix}
            2 & 1 & 0 & 0 \\
            -3 & -2 & 0 & 0 \\
            -2 & -2 & 1 & 0 \\
            -1 & -1 & 0 & 1
        \end{pmatrix}}_{\text{invertible over $\ZZ$}} .  \]    

    \textbf{Case: $e$ is of type \romanNumeralCaps{2}.} In this case, $E^+$ has only three elements, and the matrices $U$ and $A$ take the shape
        \[ U = \begin{pmatrix} 
            2 & 1 & 0   \\
            0 & -1 & 1   \\
            0 & 0 & -1   \end{pmatrix} \qquad \text{ and } \qquad
        A = \begin{pmatrix}
        1 & 1 & 1  \\
        1 & 1 & -1  \\
        2 & -2 & 0 
    \end{pmatrix}
    \] and we see
    \[ AU = 2U \underbrace{\begin{pmatrix}
            2 & 1 & 0  \\
            -3 & -2 & 0   \\
            -2 & -2 & 1   
        \end{pmatrix}}_{\text{invertible over $\ZZ$}} .  \]

    \textbf{Case: $e$ is of type \romanNumeralCaps{1}.} This time the matrix presentations $A$ and $U$ take the shape
    \[ U = \begin{pmatrix} 
        2 & 1   \\
        0 & -1 
        \end{pmatrix} \qquad \text{ and } \qquad
        A = \begin{pmatrix}
        1 & 1  \\
        3 & -1   
    \end{pmatrix}
    \] and we see
    \[ AU = 2U \underbrace{\begin{pmatrix}
            2 & 1  \\
            -3 & -2  
        \end{pmatrix}}_{\text{invertible over $\ZZ$}} .  \]
    This concludes the proof that $S$ factors as multiplication by 2 followed by an isomorphism. By \cref{cor:4Id} the same holds for $S^T$.   
\end{proof}

We remark that the proof of \cref{prop:divide_by_2} only holds at the level of Prym varieties, as otherwise one does not always have $\sum_{e\in E^+}a_e\equiv 0\mod 2$ for an arbitrary cycle.

\begin{example}
    \input{treeNecessary}

\end{example}

The upshot of \cref{prop:divide_by_2} is that the isomorphism $\psi$ gives the desired isomorphism of abelian varieties and the following completes the proof of our Main \cref{thm:main}.

\begin{theorem}
    The map $\psi:P_1\to P$ constructed in \cref{prop:divide_by_2} is an isomorphism of principally polarised abelian varieties.
\end{theorem}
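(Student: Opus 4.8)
The plan is to assemble the results of this section. By \cref{prop:divide_by_2} the map $\psi : L_1 \to L$ is already an isomorphism of lattices, so the only remaining task is to check that it respects the principal polarizations. Since $\tilde\Gamma \to \Gamma$ and $\tilde\Lambda_1 \to \Lambda_1$ are free, the Pryms $P$ and $P_1$ are principally polarized (with principal polarization $\zeta = \xi/2$), and \cref{rem:pptav} tells us that an isomorphism of the underlying $\Lambda'$-lattices is an isomorphism of principally polarized tavs as soon as it preserves the associated polarization form. As this form is, up to the global factor $2$ coming from $\xi = 2\zeta$, the Euclidean norm $\|\cdot\|^2$ of \cref{eq:pol_Pic} restricted to antisymmetric divisors, it suffices to show that $\psi$ is an isometry for $\|\cdot\|^2$.

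First I would extract the precise scaling of $s$ from the results above. The computation in \cref{polarization} shows that $\|s^t(y)\|^2 = 4\|y\|^2$ for every $y \in P$. Applying this to $y = s(x)$ and using $s^t \circ s = 4\,\Id_{P_1}$ from \cref{cor:4Id} gives
\[ 4\,\|s(x)\|^2 = \|s^t(s(x))\|^2 = \|4x\|^2 = 16\,\|x\|^2, \]
so that $\|s(x)\|^2 = 4\,\|x\|^2$ for all $x \in P_1$.

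Finally I would feed in the factorization $s = 2\psi$ furnished by \cref{prop:divide_by_2}. Because $\|\cdot\|^2$ is a quadratic form we have $\|s(x)\|^2 = \|2\psi(x)\|^2 = 4\,\|\psi(x)\|^2$; comparing with the previous step yields $\|\psi(x)\|^2 = \|x\|^2$ for all $x \in P_1$. Hence $\psi$ preserves the Euclidean norm, therefore the principal polarization form, and being a lattice isomorphism it is an isomorphism of principally polarized tropical abelian varieties by \cref{rem:pptav}.

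The only genuinely delicate point is the bookkeeping of scaling factors: the statements at hand record the behavior of the Euclidean norm $\|\cdot\|^2$, which is twice the principal polarization quadratic form because $\xi = 2\zeta$, so one must be careful not to conflate the doubling of the induced polarization with the scaling of the principal one. This is exactly where \cref{rem:pptav} does the work, reducing the claim to a single quadratic form on which the constant factor $2$ is immaterial; everything else is routine and already verified in \cref{polarization}, \cref{cor:4Id}, and \cref{prop:divide_by_2}.
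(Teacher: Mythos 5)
Your proposal is correct and takes essentially the same route as the paper, which likewise combines the lattice isomorphism from \cref{prop:divide_by_2} with the polarization statement of \cref{polarization} (itself derived for $s$ via \cref{cor:4Id}). Your explicit bookkeeping — $\|s^t(y)\|^2 = 4\|y\|^2$, hence $\|s(x)\|^2 = 4\|x\|^2$, hence $s = 2\psi$ forces $\psi$ to be an isometry, with \cref{rem:pptav} absorbing the harmless global factor $2$ between the Euclidean norm and the principal polarization — simply spells out what the paper's two-line proof leaves implicit.
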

\begin{proof}
    By construction in \cref{prop:divide_by_2}, the map $\psi$ is an isomorphism of the lattices defining the Prym varieties. By \cref{polarization} it preserves the polarisation. Hence it is an isomorphism of principally polarised tropical abelian varieties.
\end{proof}

\section{Examples}
\label{sec:examples}

In this section we present two examples illustrating \cref{thm:main}. For the pictures we draw, we will use the following conventions throughout. First, thickness encodes the dilation factor with respect to the base graph $K$. Second, in any tower $\tilde \Gamma \to \Gamma \to K$ we indicate the double cover $\tilde \Gamma \to \Gamma$ already on the level of $\Gamma$ using \emph{signed graph notation}. This means that $\tilde \Gamma$ can be constructed from the picture by taking two disjoint copies of $\Gamma$ and then changing the preimages of the dashed edges of $\Gamma$ to connect the two sheets of $\tilde \Gamma$ rather than staying within the same sheet. 
Finally, the double cover $\tilde \Gamma \to \Gamma$ will always be understood as mirroring along the vertical axis of symmetry of $\tilde \Gamma$.

\input{firstExample}

\input{secondExample}

\section{The tropical \texorpdfstring{$n$}{n}-gonal construction in moduli}
\label{sec:non-cont}

In this section we show that the tropical $n$-gonal construction does not behave well in families and we provide evidence that \cref{thm:main} cannot be extended beyond its current formulation.
The \emph{moduli space of tropical curves} $M_g^\trop$ continuously parametrises all tropical curves of fixed genus $g$. For the construction it is important to allow non-trivial vertex genus functions on tropical curves as defined in \cref{sec-preliminaries}. In $M_g^\trop$, the combinatorial type of a tropical curve changes in the limit, when the length of an edge $e$ tends to zero. The combinatorial type in the limit is the contraction $G / e$. Perhaps more complicated is the description of the moduli space of principally polarised tropical abelian varieties $A_g^\trop$ of dimension $g$, see \cite{Brannetti} and \cite{ChanTropicalTorelli} for details. When set up correctly, one can make sense of the \emph{tropical Torelli morphism} $\Jac : M_g^\trop \to A_g^\trop$ as a continuous map of tropical moduli spaces, which is defined by associating to a tropical curve $\Gamma$ its Jacobian $\Jac(\Gamma)$.

A similar construction for Prym varieties, the \emph{tropical Prym\textendash Torelli morphism} is desirable. The moduli space of \emph{unramified} double covers $R_g^\trop$, with $g$ referring to the genus of the target curve, can be defined as a special case of a Hurwitz space in the sense of \cite{CavalieriMarkwigRanganathan}. In this space, a transition between combinatorial types happens when an edge in the target graph together with its preimage(s) are contracted simultaneously. Because of the polarisation issue for Prym varieties of dilated double covers, the correct definition of the tropical Prym\textendash Torelli morphism $\Prym : R_g^\trop \to A_{g-1}^\trop$ should use the continuous Prym varieties introduced in \cite[Definition~4.17]{roehrleNgonal} in order to give a continuous morphism. 

With this setup, \cref{thm:main} provides evidence for the non-injectivity of the Prym\textendash Torelli morphism. It is natural to ask, if this structure is modular in nature. More precisely, it is straightforward to set up a tropical moduli space $T_{2, n}^\trop$ parametrising towers $\tilde \Gamma \to \Gamma \to K$ where both maps are tropical Hurwitz covers, $\deg(\Gamma \to K) = n$, and  $K$ is a tree. While the $n$-gonal construction applied pointwise does give a map
\[ T_{2, n}^\trop \longrightarrow T_{2, 2^{n-1}}^\trop, \]
this map is not in general continuous under deformations which change the combinatorial type, see \cref{fig:Donagi_not_cont} for an example with $n = 2$.

\begin{figure}
    \centering
    \begin{tikzpicture}[scale = 0.9]
    \begin{scope}
        \draw (0,0) -- (0, 2);
        \vertex{0,0}
        \vertex{0,1}
        \vertex{0,2}
        \node[anchor = east] at (-0.2, 1.5) {$e$};
    
        \draw[->, black, thick] (1.6, 1) -- (0.4, 1);
    
        \draw[line width = 2pt] (2, 0) -- (2, 1);
        \draw (2, 1) -- (2, 2);
        \draw (2, 1) to[out = 50, in = -50] (2, 2);
        \vertex[2]{2, 0}
        \vertex[2]{2, 1}
        \vertex[2]{2, 2}
    
        \draw[->, black, thick] (3.6, 1) -- (2.4, 1);
    

        \draw[line width = 2pt] (4, 0) -- (4, 1);
        \draw (4, 1) -- (5, 2);
        \draw[line width = 2pt] (5, 0) -- (5, 1);
        \draw (5, 1) -- (4, 2);
        \draw (4, 1) to[out = 130, in = -130] (4, 2);
        \draw (5, 1) to[out = 50, in = -50] (5, 2);
        \foreach \i in {0, 1, 2}{
            \vertex[2]{4, \i}
            \vertex[2]{5, \i}
        }
    \end{scope}

    \draw[->, thick, decorate,decoration=snake] (6, 1) -- (7.6, 1);
    \node[anchor = south, align = center] at (6.8, 1.2) {bigonal \\ construction};

    \draw[->, thick, decorate,decoration=snake] (2, -1) -- (2, -3);
    \node[anchor = east] at (1.9, -2) {contract $e$};

    \draw[->, thick, decorate,decoration=snake] (11, -1) -- (11, -2);
    \node[anchor = west] at (11.1, -1.5) {contract $e$};

    \begin{scope}[xshift = 250]
        \foreach \i in {0, 2, 2.5}{
            \draw (\i,0) -- (\i, 2);
            \vertex{\i,0}
            \vertex{\i,1}
            \vertex{\i,2}
        }
        
        \draw[->, black, thick] (1.6, 1) -- (0.4, 1);
    
        \draw[->, black, thick] (4.1, 1) -- (2.9, 1);
    
        \draw[line width = 2pt] (4.5, 0) -- (4.5, 1);
        \draw (4.2, 2) -- (4.5, 1) -- (4.8, 2);
        \draw (5, 0) -- (5, 1) -- (5.3, 2) -- (5.6, 1) -- (5.6, 0);
        \vertex[2]{4.5, 0}
        \vertex[2]{4.5, 1}
        \vertex{4.2, 2}
        \vertex{4.8, 2}
        \vertex{5, 0}
        \vertex{5, 1}
        \vertex[2]{5.3, 2}
        \vertex{5.6, 1}
        \vertex{5.6, 0}
    \end{scope}

    \begin{scope}[yshift = -150]
        \vertex{0, 0}
        \draw (0,0) -- (0, 1);
        \vertex{0, 1}

        \draw[->, black, thick] (1.6, 0.5) -- (0.4, 0.5);        

        \vertex[2]{2, 0}
        \draw[line width = 2pt] (2, 0) -- (2, 1);
        \vertex[2]{2, 1}

        \draw[->, black, thick] (3.6, 0.5) -- (2.4, 0.5);

        \draw[line width = 2pt] (4, 0) -- (4.2, 1) -- (4.4, 0);
        \vertex[2]{4, 0}
        \vertex[4]{4.2, 1}
        \vertex[2]{4.4, 0}

        \draw[->, thick, decorate,decoration=snake] (6, 0.5) -- (7.6, 0.5);
        \node[anchor = south, align = center] at (6.8, 0.7) {bigonal \\ construction};
    \end{scope}

    \begin{scope}[xshift = 250, yshift = -100]
        \vertex{0, 0} 
        \draw (0, 0) -- (0, 1);
        \vertex{0, 1}

        \draw[->, black, thick] (1.6, 0.5) -- (0.4, 0.5);        

        \vertex{2, 0}
        \draw (2, 0) -- (2,1);
        \vertex{2, 1}
        \vertex{2.5, 0}
        \draw (2.5, 0) -- (2.5, 1);
        \vertex{2.5, 1}

        \draw[->, black, thick] (4.1, 0.5) -- (2.9, 0.5);

        \vertex[2]{4.5, 0}
        \draw[line width = 2pt] (4.5, 0) -- (4.5, 1);
        \vertex[2]{4.5, 1}

        \vertex{4.8, 0}
        \draw (4.8, 0) -- (5.1, 1) -- (5.4, 0);
        \vertex[2]{5.1, 1}
        \vertex{5.4, 0}
    \end{scope}

    \begin{scope}[xshift = 250, yshift = -150]
        \vertex{0, 0}
        \draw (0, 0) -- (0, 1);
        \vertex{0, 1}

        \draw[->, black, thick] (1.6, 0.5) -- (0.4, 0.5);        

        \vertex{2, 0}
        \draw (2, 0) -- (2.25, 1) -- (2.5, 0);
        \vertex[2]{2.25, 1}
        \vertex{2.5, 0}

        \draw[->, black, thick] (4.1, 0.5) -- (2.9, 0.5);

        \vertex[4]{5, 1}
        \draw[line width = 2pt] (5, 1) -- (5, 0);
        \vertex[2]{5, 0}
        \draw (4.5, 0) -- (5, 1) -- (5.5, 0);
        \vertex{4.5, 0}
        \vertex{5.5, 0}
    \end{scope}
    
\end{tikzpicture}
    \caption{The Donagi construction does not commute with edge contractions.}
    \label{fig:Donagi_not_cont}
\end{figure}
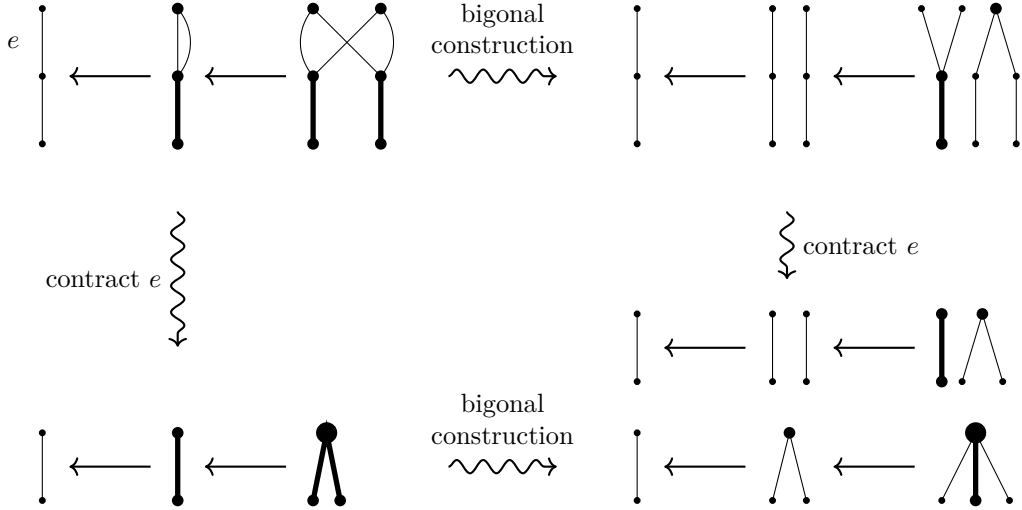

One could ask, if the definition of the $n$-gonal construction can perhaps be modified to improve this situation. More specifically, returning to the case $n = 4$, we take \cref{thm:main} as evidence, that the tetragonal construction is correctly defined on the locus of good towers inside of $T_{2, 4, h}^\trop$ and indeed, restricted to this locus the tetragonal construction gives rise to a commutative diagram of continuous maps
\begin{equation}
    \begin{tikzcd}[column sep = huge]
        T_{2, 4}^\trop \arrow[r, "\text{Donagi}"] \arrow[d, "\Prym"] & \operatorname{Sym}^2 T_{2, 4}^\trop \arrow[d, "\operatorname{Sym}^2 \Prym"] \\
        A_{g-1}^\trop \arrow[r, "A \mapsto A + A"] & \operatorname{Sym}^2 A_{g-1}^\trop.
    \end{tikzcd}
\end{equation}
Since the locus of good towers is not closed in $T_{2, 4}^\trop$, we ask for an extension of the tetragonal construction to the closure of this locus by continuity. It turns out that even this is impossible. 
Indeed, in \cref{fig:discont_example} we give an example of two covers of the same tree that will be the same after contracting a well-chosen edge but their tetragonal constructions will differ. In the picture, each of the two rows shows three towers related under the tetragonal construction. More precisely, each of the six graphs maps to the tree with edges shown on the left via a harmonic map of degree 4 and for each of the graphs we specify a free double cover in signed graph notation.

\begin{figure}[ht]
    \centering
    \begin{tikzpicture}[scale = 0.9]
    \draw[very thick] (-3, -1) -- (11.5, -1);

    \begin{scope}
        \edge{-2, 0}{-2, 5}
        \vertex{-2, 1}
        \vertex{-2, 2}
        \vertex{-2, 3}
        \vertex{-2, 4}
        \node[anchor = east] at (-2.2, 0.5) {$d$};
        \node[anchor = east] at (-2.2, 1.5) {$c$};
        \node[anchor = east] at (-2.2, 2.5) {$e$};
        \node[anchor = east] at (-2.2, 3.5) {$b$};
        \node[anchor = east] at (-2.2, 4.5) {$a$};
        \draw[->, black, thick] (-0.4, 2.5) -- (-1.6, 2.5);
    
        \edge{0, 0}{0.5, 1}
        \edge{0.5, 1}{0, 2}
        \edge{0, 2}{0, 3}
        \edge{0, 3}{0.5, 4}
        \edge{0.5, 4}{0, 5}

        \edge{1, 0}{0.5, 1}
        \edge{0.5, 1}{1, 2}
        \edge{1, 2}{1, 3}
        \edge{1, 3}{0.5, 4}
        \edge{0.5, 4}{1, 5}

        \vertex[2]{0.5, 1}
        \vertex[2]{0.5, 4}
        \vertex[2]{1, 2}
        \vertex[2]{1, 3}

        \edge{2, 0}{1.5, 1}
        \edge{1.5, 1}{1, 2}
        \edge{1, 3}{1.5, 4}
        \edge{1.5, 4}{2, 5}
        \foreach \y in {0, 1, 2, 3, 4}{
            \edge{2, \y}{2, \y + 1}
        }

        \draw[dashed] (1, 2) to[bend right = 50] (1, 3);

        \vertex[2]{2, 0}
        \vertex[2]{2, 5}       
    \end{scope}

    \begin{scope}[xshift = 120]
        \draw[dashed] (0.5, 0) -- (0, 1);
        \edge{0, 1}{0, 2}
        \edge{0, 2}{0.5, 3}
        \edge{0.5, 3}{0, 4}
        \edge{0, 4}{0, 5}

        \edge{0.5, 0}{1, 1}
        \edge{1, 1}{1.5, 2}
        \edge{1.5, 2}{1.5, 3}
        \edge{1.5, 3}{1, 4}
        \edge{1, 4}{1, 5}

        \edge{1, 0}{1.5, 1}
        \edge{1.5, 1}{1, 2}
        \edge{1, 2}{0.5, 3}
        \edge{0.5, 3}{1, 4}

        \edge{2, 0}{1.5, 1}
        \edge{1.5, 1}{1.5, 2}
        \edge{1.5, 2}{2, 3}
        \edge{2, 3}{2, 4}
        \edge{2, 4}{2, 5}

        \draw[dashed] (1, 4) to[bend right = 50] (1, 5);
        
        \vertex[2]{0.5, 0}
        \vertex[2]{0.5, 3}
        \vertex[2]{1, 4}
        \vertex[2]{1, 5}
        \vertex[2]{1.5, 1}
        \vertex[2]{1.5, 2}
    \end{scope}

    \begin{scope}[xshift = 240, yscale = -1, yshift = -142]
        \draw[dashed] (0.5, 0) -- (0, 1);
        \edge{0, 1}{0, 2}
        \edge{0, 2}{0.5, 3}
        \edge{0.5, 3}{0, 4}
        \edge{0, 4}{0, 5}

        \edge{0.5, 0}{1, 1}
        \edge{1, 1}{1.5, 2}
        \edge{1.5, 2}{1.5, 3}
        \edge{1.5, 3}{1, 4}
        \edge{1, 4}{1, 5}

        \edge{1, 0}{1.5, 1}
        \edge{1.5, 1}{1, 2}
        \edge{1, 2}{0.5, 3}
        \edge{0.5, 3}{1, 4}

        \edge{2, 0}{1.5, 1}
        \edge{1.5, 1}{1.5, 2}
        \edge{1.5, 2}{2, 3}
        \edge{2, 3}{2, 4}
        \edge{2, 4}{2, 5}    

        \draw[dashed] (1, 4) to[bend right = 50] (1, 5);
        
        \vertex[2]{0.5, 0}
        \vertex[2]{0.5, 3}
        \vertex[2]{1, 4}
        \vertex[2]{1, 5}
        \vertex[2]{1.5, 1}
        \vertex[2]{1.5, 2}
    \end{scope}

    \begin{scope}[yshift = -200]
        \edge{-2, 0}{-2, 5}
        \vertex{-2, 1}
        \vertex{-2, 2}
        \vertex{-2, 3}
        \vertex{-2, 4}
        \node[anchor = east] at (-2.2, 0.5) {$d$};
        \node[anchor = east] at (-2.2, 1.5) {$c$};
        \node[anchor = east] at (-2.2, 2.5) {$e$};
        \node[anchor = east] at (-2.2, 3.5) {$b$};
        \node[anchor = east] at (-2.2, 4.5) {$a$};
        \draw[->, black, thick] (-0.4, 2.5) -- (-1.6, 2.5);
        
        \edge{0, 0}{0.5, 1}
        \edge{0.5, 1}{0, 2}
        \edge{0, 2}{0, 3}
        \edge{0, 3}{0.5, 4}
        \edge{0.5, 4}{0, 5}

        \edge{1, 0}{0.5, 1}
        \edge{0.5, 1}{1, 2}
        \edge{1, 2}{1, 3}
        \draw[dashed] (1, 3) -- (0.5, 4);
        \edge{0.5, 4}{1, 5}

        \vertex[2]{0.5, 1}
        \vertex[2]{0.5, 4}
        \vertex[2]{1, 2}
        \vertex[2]{1, 3}

        \edge{2, 0}{1.5, 1}
        \edge{1.5, 1}{1, 2}
        \edge{1, 3}{1.5, 4}
        \edge{1.5, 4}{2, 5}
        \foreach \y in {0, 1, 2, 3, 4}{
            \edge{2, \y}{2, \y + 1}
        }

        \draw[dashed] (1, 2) to[bend right = 50] (1, 3);

        \vertex[2]{2, 0}
        \vertex[2]{2, 5}               
    \end{scope}

    \begin{scope}[yshift = -200, xshift = 120]
        \draw[dashed] (0.5, 0) -- (0, 1);
        \edge{0, 1}{0, 2}
        \edge{0, 2}{0.5, 3}
        \edge{0.5, 3}{0, 4}
        \draw[dashed] (0, 4) -- (0.5, 5);

        \edge{0.5, 0}{1, 1}
        \edge{1, 1}{1.5, 2}
        \edge{1.5, 2}{1.5, 3}
        \edge{1.5, 3}{1, 4}
        \edge{1, 4}{0.5, 5}

        \edge{1, 0}{1.5, 1}
        \edge{1.5, 1}{1, 2}
        \edge{1, 2}{0.5, 3}
        \edge{0.5, 3}{1.5, 4}
        \edge{1.5, 4}{1, 5}

        \edge{2, 0}{1.5, 1}
        \edge{1.5, 1}{1.5, 2}
        \edge{1.5, 2}{2, 3}
        \edge{2, 3}{1.5, 4}
        \edge{1.5, 4}{2, 5}    
        
        \vertex[2]{0.5, 0}
        \vertex[2]{0.5, 3}
        \vertex[2]{0.5, 5}
        \vertex[2]{1.5, 1}
        \vertex[2]{1.5, 2}
        \vertex[2]{1.5, 4}
    \end{scope}

    \begin{scope}[yshift = -200, xshift = 240]
        \edge{0, 0}{0, 1}
        \edge{0, 1}{0.5, 2}
        \edge{0.5, 2}{0, 3}
        \edge{0, 3}{0, 4}
        \edge{0, 4}{0, 5}

        \edge{1, 0}{1, 1}
        \edge{1, 1}{0.5, 2}
        \edge{0.5, 2}{1, 3}
        \edge{1, 3}{1, 4}
        \edge{1, 4}{1, 5}

        \draw[dashed] (1, 0) to[bend right = 50] (1, 1);
        \edge{1, 1}{1, 2}
        \edge{1, 2}{1.5, 3}
        \edge{1.5, 3}{1, 4}
        \draw[dashed] (1, 4) to[bend right = 50] (1, 5);

        \edge{2, 0}{2, 1}
        \edge{2, 1}{2, 2}
        \edge{2, 2}{1.5, 3}
        \edge{1.5, 3}{2, 4}
        \edge{2, 4}{2, 5}

        \vertex[2]{0.5, 2}
        \vertex[2]{1, 0}
        \vertex[2]{1, 1}
        \vertex[2]{1, 4}
        \vertex[2]{1, 5}
        \vertex[2]{1.5, 3}
    \end{scope}
\end{tikzpicture}
    \caption{Two triples of good tower related under the tetragonal construction.}
    \label{fig:discont_example}
\end{figure}
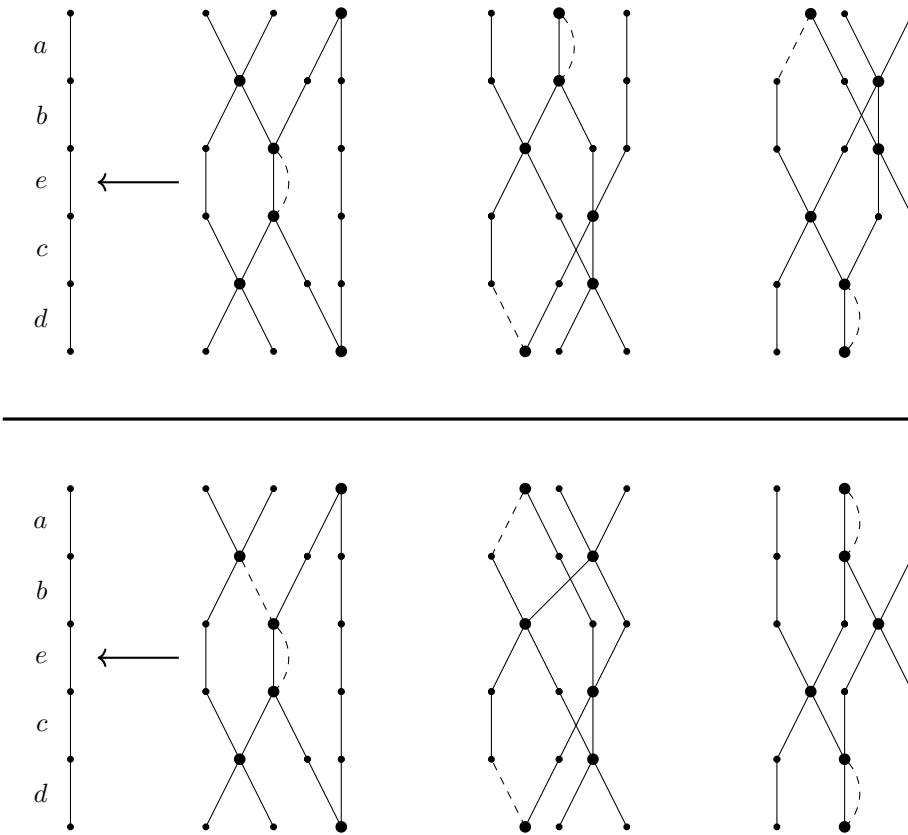

After contracting the edge $e$ and all of its preimages in all six towers, we obtain the towers shown in \cref{fig:discont_example_contracted}. The first tower is the same in both rows and here the double cover is no longer free (and hence the contracted tower is no longer good): the double cover is dilated over the orange vertex. The other four towers are not good either, but they are pairwise different.

\begin{figure}[ht]
    \centering
    \begin{tikzpicture}[scale = 0.9]
    \draw[very thick] (-3, -1) -- (11.5, -1);

    \begin{scope}
        \edge{-2, 0}{-2, 4}
        \vertex{-2, 1}
        \vertex{-2, 2}
        \vertex{-2, 3}
        \node[anchor = east] at (-2.2, 0.5) {$d$};
        \node[anchor = east] at (-2.2, 1.5) {$c$};
        \node[anchor = east] at (-2.2, 2.5) {$b$};
        \node[anchor = east] at (-2.2, 3.5) {$a$};
        \draw[->, black, thick] (-0.4, 2) -- (-1.6, 2);
    
        \edge{0, 0}{0.5, 1}
        \edge{0.5, 1}{0, 2}
        \edge{0, 2}{0.5, 3}
        \edge{0.5, 3}{0, 4}

        \edge{1, 0}{0.5, 1}
        \edge{0.5, 1}{1, 2}
        \edge{1, 2}{0.5, 3}
        \edge{0.5, 3}{1, 4}

        \vertex[2]{0.5, 1}
        \vertex[2]{0.5, 3}

        \edge{2, 0}{1.5, 1}
        \edge{1.5, 1}{1, 2}
        \edge{1, 2}{1.5, 3}
        \edge{1.5, 3}{2, 4}
        \foreach \y in {0, 1, 2, 3}{
            \edge{2, \y}{2, \y + 1}
        }

        \vertex[2]{2, 0}
        \vertex[2]{2, 4}       

        \filldraw[RedOrange, thick] (1, 2) circle (4pt); 
    \end{scope}

    \begin{scope}[xshift = 120]
        \draw[dashed] (0.5, 0) -- (0, 1);
        \edge{0, 1}{0.5, 2}
        \edge{0.5, 2}{0, 3}
        \edge{0, 3}{0, 4}

        \edge{0.5, 0}{1, 1}
        \edge{1, 1}{1.5, 2}
        \edge{1.5, 2}{1, 3}
        \edge{1, 3}{1, 4}

        \edge{1, 0}{1.5, 1}
        \edge{1.5, 1}{0.5, 2}
        \edge{0.5, 2}{1, 3}

        \edge{2, 0}{1.5, 1}
        \edge{1.5, 1}{1.5, 2}
        \edge{1.5, 2}{2, 3}
        \edge{2, 3}{2, 4}

        \draw[dashed] (1, 3) to[bend right = 50] (1, 4);
        
        \vertex[2]{0.5, 0}
        \vertex[2]{0.5, 2}
        \vertex[2]{1, 3}
        \vertex[2]{1, 4}
        \vertex[2]{1.5, 1}
        \vertex[2]{1.5, 2}
    \end{scope}

    \begin{scope}[xshift = 240, yscale = -1, yshift = -114]
        \draw[dashed] (0.5, 0) -- (0, 1);
        \edge{0, 1}{0.5, 2}
        \edge{0.5, 2}{0, 3}
        \edge{0, 3}{0, 4}

        \edge{0.5, 0}{1, 1}
        \edge{1, 1}{1.5, 2}
        \edge{1.5, 2}{1, 3}
        \edge{1, 3}{1, 4}

        \edge{1, 0}{1.5, 1}
        \edge{1.5, 1}{0.5, 2}
        \edge{0.5, 2}{1, 3}

        \edge{2, 0}{1.5, 1}
        \edge{1.5, 1}{1.5, 2}
        \edge{1.5, 2}{2, 3}
        \edge{2, 3}{2, 4}

        \draw[dashed] (1, 3) to[bend right = 50] (1, 4);
        
        \vertex[2]{0.5, 0}
        \vertex[2]{0.5, 2}
        \vertex[2]{1, 3}
        \vertex[2]{1, 4}
        \vertex[2]{1.5, 1}
        \vertex[2]{1.5, 2}
    \end{scope}

    \begin{scope}[yshift = -170]
        \edge{-2, 0}{-2, 4}
        \vertex{-2, 1}
        \vertex{-2, 2}
        \vertex{-2, 3}
        \node[anchor = east] at (-2.2, 0.5) {$d$};
        \node[anchor = east] at (-2.2, 1.5) {$c$};
        \node[anchor = east] at (-2.2, 2.5) {$b$};
        \node[anchor = east] at (-2.2, 3.5) {$a$};
        \draw[->, black, thick] (-0.4, 2) -- (-1.6, 2);
    
        \edge{0, 0}{0.5, 1}
        \edge{0.5, 1}{0, 2}
        \edge{0, 2}{0.5, 3}
        \edge{0.5, 3}{0, 4}

        \edge{1, 0}{0.5, 1}
        \edge{0.5, 1}{1, 2}
        \edge{1, 2}{0.5, 3}
        \edge{0.5, 3}{1, 4}

        \vertex[2]{0.5, 1}
        \vertex[2]{0.5, 3}

        \edge{2, 0}{1.5, 1}
        \edge{1.5, 1}{1, 2}
        \edge{1, 2}{1.5, 3}
        \edge{1.5, 3}{2, 4}
        \foreach \y in {0, 1, 2, 3}{
            \edge{2, \y}{2, \y + 1}
        }

        \vertex[2]{2, 0}
        \vertex[2]{2, 4}   

        \filldraw[RedOrange, thick] (1, 2) circle (4pt); 
    \end{scope}

    \begin{scope}[yshift = -170, xshift = 120]
        \draw[dashed] (0.5, 0) -- (0, 1);
        \edge{0, 1}{0.5, 2}
        \edge{0.5, 2}{0, 3}
        \draw[dashed] (0, 3) -- (0.5, 4);

        \edge{0.5, 0}{1, 1}
        \edge{1, 1}{1.5, 2}
        \edge{1.5, 2}{1, 3}
        \edge{1, 3}{0.5, 4}

        \edge{1, 0}{1.5, 1}
        \edge{1.5, 1}{0.5, 2}
        \edge{0.5, 2}{1.5, 3}
        \edge{1.5, 3}{1, 4}

        \edge{2, 0}{1.5, 1}
        \edge{1.5, 1}{1.5, 2}
        \edge{1.5, 2}{1.5, 3}
        \edge{1.5, 3}{2, 4}    
        
        \vertex[2]{0.5, 0}
        \vertex[2]{0.5, 2}
        \vertex[2]{0.5, 4}
        \vertex[2]{1.5, 1}
        \vertex[2]{1.5, 2}
        \vertex[2]{1.5, 3}
    \end{scope}

    \begin{scope}[yshift = -170, xshift = 240]
        \edge{0, 0}{0, 1}
        \edge{0, 1}{0.5, 2}
        \edge{0.5, 2}{0, 3}
        \edge{0, 3}{0, 4}

        \edge{1, 0}{1, 1}
        \edge{1, 1}{0.5, 2}
        \edge{0.5, 2}{1, 3}
        \edge{1, 3}{1, 4}

        \draw[dashed] (1, 0) to[bend right = 50] (1, 1);
        \edge{1, 1}{1.5, 2}
        \edge{1.5, 2}{1, 3}
        \draw[dashed] (1, 3) to[bend right = 50] (1, 4);

        \edge{2, 0}{2, 1}
        \edge{2, 1}{1.5, 2}
        \edge{1.5, 2}{2, 3}
        \edge{2, 3}{2, 4}

        \vertex[2]{0.5, 2}
        \vertex[2]{1, 0}
        \vertex[2]{1, 1}
        \vertex[2]{1, 3}
        \vertex[2]{1, 4}
        \vertex[2]{1.5, 2}
    \end{scope}
\end{tikzpicture}
    \caption{The towers of \cref{fig:discont_example} with the edge $e$ and all its preimages contracted.}
    \label{fig:discont_example_contracted}
\end{figure}
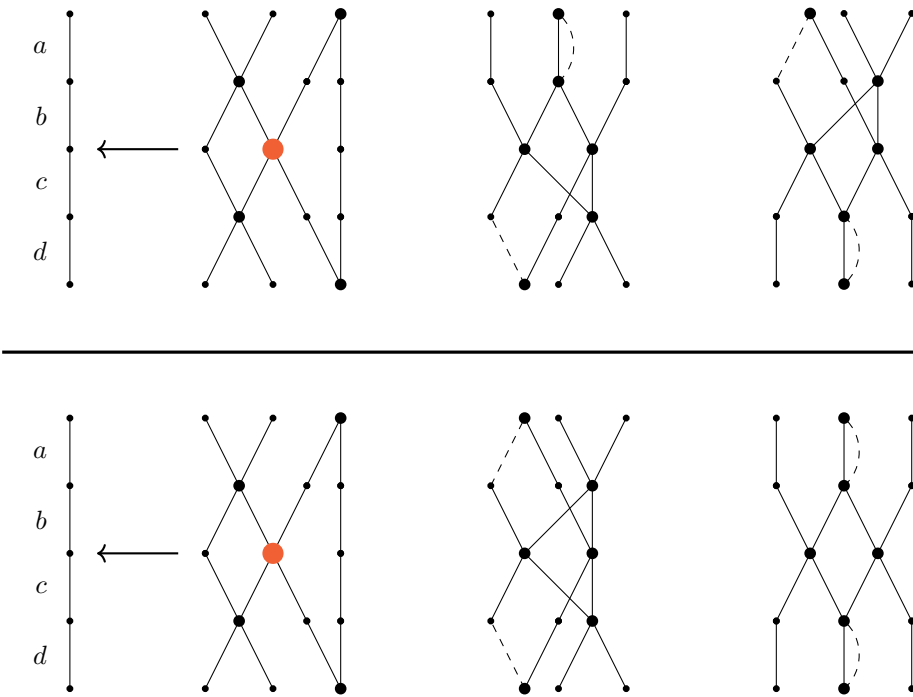

From this example we conclude that one cannot give a new definition of the tetragonal construction that matches the current one for good towers, which gives rise to a triality, and which is continuous with respect to edge contractions. 
    
\FloatBarrier

\addcontentsline{toc}{section}{Bibliography}
\bibliographystyle{alpha}
\bibliography{bibliography}

\end{document}